\newtheorem{thm}{Theorem}[section]
\newtheorem{prop}[thm]{Proposition}
\newtheorem{lemma}[thm]{Lemma}
\newtheorem{rem}[thm]{Remark}
\newtheorem{defi}[thm]{Definition}
\newtheorem{nota}{Notation}[section]
\newcommand*{\plim}[1][]{%
	\if\relax\detokenize{#1}\relax
	\def\next{\qopname\relax m{lim}}%
	\else
	\def\next{\qopname\newmcodes@ m{#1-lim}}%
	\fi
	\next
}
\newcommand*{\psum}[1][]{%
	\DOTSB
	\if\relax\detokenize{#1}\relax\else
	\operatorname{#1-}\mkern-\thinmuskip
	\fi
	\sum@\slimits@
}
\newcommand{\R}{\mathbb{R}}             
\newcommand{\N}{\mathbb{N}}             
\newcommand{\C}{\mathbb{C}}             
\newcommand {\clr} {\color{red}}
\numberwithin{equation}{section}
\title{On uniqueness of radial potentials for given Dirichlet spectra with distinct angular momenta}
\author{Damien Gobin, Beno\^it Gr\' ebert, Bernard Helffer  and Fran{\c{c}}ois Nicoleau}
\begin{document}

\maketitle


\begin{abstract}
We consider an inverse spectral problem for radial Schr\"odinger operators with singular potentials. 
First, we show that the knowledge of the Dirichlet spectra for infinitely many angular momenta~$\ell$ 
satisfying a M\"untz-type condition uniquely determines the potential. 
Next, in a neighborhood of the zero potential, we prove local uniqueness from two Dirichlet spectra 
associated with distinct angular momenta in the cases 
\((\ell_1,\ell_2) = (0,1)\,, \ (1,2)\) and  \((0,3)\)\,.
Our approach relies on an explicit analysis of the associated singular differential equation, 
combined with the classical Kneser--Sommerfeld formula. 
These results sharpen a theorem of Carlson-Shubin~(1994) and confirm, in the linearized setting and for these configurations, a conjecture originally formulated by Rundell and Sacks~(2001).
\end{abstract}

\section{Introduction}

Inverse spectral problems for Schr\" odinger operators play a central role in quantum mechanics and mathematical physics, particularly in determining unknown potentials from spectral measurements. A classical example arises when the Schr\" odinger equation is considered in three spatial dimensions with a spherically symmetric potential in the unit ball of $\R^3$. In such cases, a standard separation of variables in spherical coordinates reduces the original partial differential equation to a family of one-dimensional singular Sturm-Liouville problems of the form
\begin{equation} \label{eq:radial-Schro}
	- \frac{d^2u}{dr^2} + \left( \frac{\ell(\ell+1)}{r^2} + q(r) \right) u = \lambda u\,, \qquad r \in (0,1)\,,
\end{equation}
where $\ell \in \mathbb{N}$ is the angular momentum quantum number, and the radial potential $q$ is assumed to be real-valued and square-integrable on $(0,1)$. This equation is supplemented with a Dirichlet boundary condition at $r = 1$ and a regularity condition at the origin, typically of the form $u(r) = O(r)$ as $r \to 0\,$, which ensures square-integrability of the solution and reflects the physical finiteness of the wavefunction near the origin.

\vspace{0.1cm}\noindent
Such reduced radial problems are not only mathematically rich but also physically relevant. They appear in a variety of contexts including quantum scattering in bounded domains, the study of acoustic modes in stellar interiors, and in zonal decompositions of Laplace-type operators on spheres. See, for instance,~\cite{Carlson97,RuSa01}.

\vspace{0.1cm}\noindent
A foundational result in regular inverse spectral theory is the Borg--Levinson theorem, which asserts that a potential can be uniquely recovered from the knowledge of the Dirichlet spectrum together with the associated norming constants, (that is, the Neumann data of the associated eigenfunctions; see below for further details).
In the setting of singular Sturm--Liouville equations such as~\eqref{eq:radial-Schro}, Carlson~\cite{Carlson97} extended this result to include centrifugal singularities, establishing uniqueness under suitable conditions on the spectrum and norming data.

\vspace{0.1cm}\noindent
However, these classical results rely crucially on the availability of norming constants. In contrast, the present work investigates a different question, physically more relevant: can one determine the potential $q$ uniquely from the knowledge of the Dirichlet spectra corresponding to distinct values of $\ell$, \emph{without} any norming constants?

\vspace{0.1cm}\noindent
Before addressing this question, let us recall some spectral properties of the underlying operator. 
For each fixed angular momentum $\ell \in \mathbb{N}$ and potential $q \in L^2(0,1)$, 
the Dirichlet spectrum is given by the zeros of an entire function (the so-called regular solution; see Appendix~\ref{app:A}),
and consists of a countable sequence of simple, real eigenvalues $\{\lambda_{\ell,n}(q)\}_{n\geq 1}$ tending to infinity.
These properties are well established in the literature~\cite{CarlShu94,Carlson97} and form the basis for both direct and inverse spectral analysis in the radial setting.

\vspace{0.1cm}\noindent
This observation naturally raises the question of whether combining spectral data from distinct angular momenta can lead to uniqueness. Indeed, the spectrum corresponding to a single value of $\ell$ does not determine the potential uniquely: the associated isospectral set is locally infinite-dimensional in $L^2(0,1)$~\cite{Carlson97,GR,PT,Ser07}. By contrast, Carlson and Shubin~\cite{CarlShu94} showed that combining spectral data from two distinct angular momenta $\ell_1 \neq \ell_2$ significantly reduces the ambiguity. In particular, they proved that the corresponding isospectral set has finite local dimension when $\ell_2 - \ell_1$ is odd.

\vspace{0.1cm}\noindent
Rundell and Sacks~\cite{RuSa01} went further and conjectured that the potential is uniquely determined by the Dirichlet spectra corresponding to any two distinct values of $\ell$, regardless of the parity of their difference. This reflects the fact that the angular dependence introduced via the centrifugal term $\ell(\ell+1)/r^2$ encodes genuinely distinct spectral information for each angular momentum channel, which can be leveraged to resolve the inverse problem even in the absence of norming constants.


\vspace{0.1cm}\noindent
Before turning to the more delicate case of two angular momenta, let us first consider the situation where Dirichlet spectra are known for infinitely many values of~$\ell\,$. Our first theorem is a global uniqueness result and concerns the case where one knows 
the Dirichlet spectra for infinitely many angular momenta~$\ell$ satisfying a M\"untz-type condition. In fact, this result is a direct consequence of Theorem~2.1 established in~\cite{Ra99} in the framework of scattering theory; see also~(\cite{DaNi16}, Theorem 1.4)  for local uniqueness results and~\cite{Go18} for an extension to magnetic fields. The proof of Theorem~\ref{thm:global} is given in Appendix~\ref{app:A} , where we establish a correspondence 
between Dirichlet spectral data and scattering phases.

\begin{thm}[Global uniqueness]
\label{thm:global}
Let $(\ell_k)_{k \geq 1}$ be a strictly increasing sequence of positive integers such that
\[
\sum_{k=1}^\infty \frac{1}{\ell_k} = +\infty\,.
\]
Then the potential \( q \in L^2(0,1) \) is uniquely determined by the Dirichlet spectra 
\(\{\lambda_{\ell_k,n}(q)\}_{k,n\geq 1}\)\,.
\end{thm}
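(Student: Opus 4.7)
The plan is to reduce Theorem~\ref{thm:global} to Ramm's uniqueness result (\cite{Ra99}, Theorem~2.1): two real-valued, compactly supported half-line potentials producing identical phase shifts $\delta_\ell(k)$ at a sequence of angular momenta $(\ell_k)_{k\geq 1}$ satisfying the M\"untz condition $\sum 1/\ell_k = +\infty$ must coincide. The bridge is a channel-by-channel identification of $\delta_\ell$ from the Dirichlet spectrum $\{\lambda_{\ell,n}(q)\}_{n\geq 1}$ of~\eqref{eq:radial-Schro}.

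First, for each fixed $\ell$, I would reconstruct the characteristic function $\Delta_\ell(\lambda):=\varphi_\ell(1,\lambda)$, where $\varphi_\ell(r,\lambda)$ is the regular solution normalized by $\varphi_\ell(r,\lambda)\sim r^{\ell+1}$ as $r\to 0$. Since $\Delta_\ell$ is entire of order $1/2$, its simple zeros are exactly the $\lambda_{\ell,n}(q)$, and by the Weyl law $\lambda_{\ell,n}\sim(n\pi)^2$, Hadamard factorization gives
\[
\Delta_\ell(\lambda)=C_\ell\prod_{n\geq 1}\left(1-\frac{\lambda}{\lambda_{\ell,n}(q)}\right),
\]
and the scalar $C_\ell$ is pinned down by the universal large-$|\lambda|$ asymptotics $\Delta_\ell(\lambda)\sim\Delta_\ell^{(0)}(\lambda)$ as $\lambda\to-\infty$, where $\Delta_\ell^{(0)}$ is the free ($q\equiv 0$) characteristic function, expressible through $J_{\ell+1/2}$. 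Consequently the Dirichlet spectrum at each fixed $\ell$ determines $\Delta_\ell$ as an entire function.

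Second, I would extend $q$ by zero to $(1,+\infty)$ and translate $\Delta_\ell$ into scattering data for the resulting half-line problem. On $(1,+\infty)$ the regular solution $\varphi_\ell$ continues as an explicit combination of free Hankel solutions; matching at $r=1$, together with the constancy of the Wronskian $W(\psi_\ell,\varphi_\ell)\equiv F_\ell(k)$ (with $\psi_\ell$ the Jost solution), yields a formula for the Jost function, and hence for the phase shift $\delta_\ell(k)=-\arg F_\ell(k)$, in terms of boundary values of $\varphi_\ell$ at $r=1$ paired against potential-independent Bessel factors. Applying this correspondence for every $\ell=\ell_k$ and feeding the resulting phase shifts into Ramm's theorem would then force the two potentials sharing the spectra $\{\lambda_{\ell_k,n}\}_{k,n\geq 1}$ to coincide.

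The main obstacle is in the second step. The naive Wronskian identity for $F_\ell$ a priori involves not only $\Delta_\ell(\lambda)$ but also the Neumann trace $\varphi_\ell'(1,\lambda)$, so it is not immediate that the Dirichlet spectrum alone controls $\delta_\ell$. Resolving this requires either exhibiting an explicit expression of the relevant combination entering $F_\ell$ as a quantity computable from $\Delta_\ell$ and potential-independent Bessel data (for instance by exploiting the Frobenius normalization at the regular-singular point $r=0$ and the common asymptotic structure of solutions), or else working with an alternative ``scattering phase'' intrinsic to the interval problem that is built directly from $\Delta_\ell$ and still falls within the scope of Ramm's framework. Identifying the correct correspondence between the interior Dirichlet data and Ramm's half-line scattering quantities is the genuinely technical heart of the argument; once it is in place, the divergence condition $\sum 1/\ell_k=+\infty$ immediately triggers global uniqueness via \cite{Ra99}.
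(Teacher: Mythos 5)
Your proposal follows the paper's route step by step: Hadamard factorization of the characteristic function $\varphi(1,\nu,\lambda)$ from the Dirichlet eigenvalues, with the leading constant normalized by comparison with the free Bessel solution (the paper's Lemma~\ref{lem:factorization} and Lemma~\ref{lem:factorization_equality}); extension of $q$ by zero; passage to the Jost/scattering formalism at fixed energy; and reduction to Ramm's fixed-energy uniqueness theorem under the M\"untz condition. The plan is the right one. However, the proposal is not yet a proof: you correctly locate the one genuinely delicate point, namely extracting the phase shift $\delta(\nu)$ from the Dirichlet data, and then explicitly declare it unresolved. As you yourself observe, the naive Wronskian expression for the Jost function involves the Neumann trace $\varphi'(1,\nu,\lambda)$, which is not directly controlled by the Dirichlet spectrum, so this step cannot simply be waved through.

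The paper handles it by fixing the energy at $\lambda=1$ and exploiting that, because the potential is supported in $[0,1]$, the Jost solutions $f^\pm(r,\nu,1)$ coincide for $r\geq 1$ with explicit, potential-independent, nonvanishing Hankel combinations $e^{\pm i(\nu+\frac12)\frac{\pi}{2}}\sqrt{\pi r/2}\,H_\nu^{(1,2)}(r)$. The regular solution decomposes as $\varphi(r,\nu,1)=\alpha(\nu)f^+(r,\nu,1)+\beta(\nu)f^-(r,\nu,1)$ with $\overline{\alpha(\nu)}=\beta(\nu)$ and neither factor vanishing, and the object actually fed into Ramm's theorem is the Regge interpolation $\sigma(\nu)=e^{i\pi(\nu+\frac12)}\alpha(\nu)/\beta(\nu)$. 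Lemma~\ref{lem:factorization_equality} then gives $\varphi(1,\nu_k,\lambda;q)=\varphi(1,\nu_k,\lambda;\tilde q)$ for all $\lambda$, and the paper concludes that the $\sigma(\nu_k)$ coincide for the two potentials. This concluding inference is precisely the step your proposal leaves open, and it is where the work is: a single scalar relation such as $\varphi(1,\nu,1)=\alpha\,f^+(1,\nu,1)+\overline{\alpha}\,f^-(1,\nu,1)$ does not by itself determine the complex number $\alpha(\nu)$, so the argument must genuinely use the extra structure (the conjugation relation, nonvanishing, and the explicit free asymptotics that fix $C_\nu(q)$) before Ramm's Theorem~2.1 can be invoked. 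Supplying that computation, or carefully importing it from~\cite{DaNi16}, is exactly the ``technical heart'' you flagged; without it the reduction to Ramm's theorem is not established.
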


\vspace{0.1cm}\noindent
Having established this global uniqueness result in the case of infinitely many spectra, 
we now turn to the more delicate situation involving only finitely many angular momenta. 
Because of its singular structure and the nonlinear dependence of the spectrum on the potential, the full inverse problem poses significant analytical challenges. A natural first step is to consider its linearization around a reference potential, typically $q \equiv 0$. Let $\psi_{\ell,n}(x)$ denote the $n$-th normalized eigenfunction associated with angular momentum $\ell$ and potential $q$. Then the Fr\'echet derivative of the corresponding eigenvalue with respect to $q$ is given by
\begin{equation}\label{differentielle}
	D_q \lambda_{\ell,n}(q)\, \zeta = \int_0^1 \zeta(x)\, \psi_{\ell,n}(x)^2\, dx\,,
\end{equation}
where $\zeta$ denotes a perturbation of the potential, (see \cite{RuSa01}). The norming constants are defined by $\gamma_n:= (\psi_{\ell,n}'(1))^2$.  In the linearized setting, the requirement that two potentials $q$ and $\check{q}$ yield the same eigenvalues for two angular momenta $\ell_1 \neq \ell_2$ implies that $\zeta = \check{q} - q$ must be orthogonal to the family of squared eigenfunctions corresponding to both $\ell_1$ and $\ell_2$, evaluated at $q=0$.

\vspace{0.1cm}\noindent
In the unperturbed case $q \equiv 0$, the eigenfunctions of~\eqref{eq:radial-Schro} can be expressed explicitly in terms of Bessel functions of the first kind. Specifically, for each $\ell \in \mathbb{N}$\,, the eigenfunctions take the form (up to normalization)
\[
\psi_{\ell,n}(r) := \sqrt{\frac{\pi r}{2}}\, J_\nu(j_{\nu,n} r)\,, \qquad \nu = \ell + \tfrac{1}{2}\,, \quad n \geq 1\,,
\]
where $J_\nu$ denotes the Bessel function of order $\nu$, and $j_{\nu,n}$ is its $n$-th positive zero. These functions satisfy Dirichlet boundary conditions at $r = 1$, and regularity at the origin is ensured by the behavior $J_\nu(r) \sim r^\nu$ as $r \to 0$\,.

\vspace{0.1cm}\noindent
This leads to the problem of determining whether the span of the functions
\[
\left\{ \psi_{\ell,n}^2(x) \,:\, n \geq 1,\ \ell \in \{\ell_1, \ell_2\} \right\}
\]
is dense in $L^2(0,1)$. Establishing this completeness property in the linearized setting is a first step in order to prove the local uniqueness of the potential near $q = 0\,$.

\vspace{0.1cm}\noindent
Even in the more favorable case where the angular momentum $\ell$ takes infinitely many values, 
it is not known whether the vector space spanned by the functions $\psi_{\ell,n}^2(x)$ 
is dense in \(L^2(0,1)\)\,, 
despite the fact that the problem is expected to be highly overdetermined. 
Although global uniqueness can now be derived by other methods, see Appendix~\ref{app:A}, 
the following result remains of independent interest.

\begin{thm}[Completeness]
\label{thm:completude}
Let $(\ell_k)_{k \geq 1}$ be a strictly increasing sequence of positive integers such that
\[
\sum_{k=1}^\infty \frac{1}{\ell_k} = +\infty\,.
\]
Then the family
\[
\bigl\{ \psi_{\ell_k,n}^2(x) : k \geq 1,\ n \geq 1 \bigr\}
\]
is complete in \(L^2(0,1)\)\,.
\end{thm}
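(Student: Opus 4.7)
The plan is to show that any $f \in L^2(0,1)$ orthogonal to every $\psi_{\ell_k,n}^2$ must vanish, by reducing the condition to a single moment identity via the Kneser--Sommerfeld formula and then invoking Szasz's theorem.

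Fix $k \geq 1$, set $\nu = \nu_k = \ell_k + \tfrac{1}{2}$, and introduce the entire functions
\[
A_\nu(z) := \int_0^1 x\, f(x)\, J_\nu(zx)\, Y_\nu(zx)\, dx, \qquad B_\nu(z) := \int_0^1 x\, f(x)\, J_\nu(zx)^2\, dx.
\]
The diagonal Green's function of the free radial operator at angular momentum $\ell_k$ admits, on the one hand, the spectral expansion
\[
\int_0^1 f(x)\, G(x,x,z)\, dx = \sum_{n \geq 1} \frac{c_n}{j_{\nu_k,n}^2 - z^2}, \qquad c_n := \int_0^1 f\, \tilde\psi_{\ell_k,n}^2\, dx,
\]
(with $L^2$-normalized eigenfunctions $\tilde\psi_{\ell_k,n}$), which vanishes identically since every $c_n = 0$ by hypothesis, and, on the other hand, the Kneser--Sommerfeld closed form
\[
G(x,x,z) = \frac{\pi\, x\, J_\nu(zx)\,\bigl[J_\nu(z) Y_\nu(zx) - Y_\nu(z) J_\nu(zx)\bigr]}{2\, J_\nu(z)},
\]
which produces $\int_0^1 f\, G\, dx = (\pi/2)\bigl[A_\nu(z) - (Y_\nu(z)/J_\nu(z))\,B_\nu(z)\bigr]$. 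Equating the two expressions and clearing the denominator yields the functional identity
\[
J_{\nu_k}(z)\, A_{\nu_k}(z) = Y_{\nu_k}(z)\, B_{\nu_k}(z), \qquad \forall\, z \in \C.
\]

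The next step exploits the half-integer structure of $\nu_k$. For $\nu = \ell + \tfrac{1}{2}$, both $(z/2)^{-\nu} J_\nu(z)$ and $(z/2)^{\nu} Y_\nu(z)$ are entire functions of $z^2$, taking the respective values $1/\Gamma(\nu+1)$ and $-\Gamma(\nu)/\pi$ at $z=0$. Inserting these factorizations together with
\[
B_\nu(z) = (z/2)^{2\nu}\, \int_0^1 x^{2\nu+1} f(x)\, \bigl[(zx/2)^{-\nu} J_\nu(zx)\bigr]^2 dx,
\]
the common factor $(z/2)^\nu$ cancels on both sides, reducing the identity to a power-series equality in $w := z^2$. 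Extracting the coefficient of $w^0$ and simplifying with $\Gamma(\nu+1) = \nu\Gamma(\nu)$ produces the clean relation
\[
\mu_{2\ell_k+2}(f) = \mu_1(f), \qquad \text{where } \mu_p(f) := \int_0^1 x^p f(x)\, dx.
\]
Since $|\mu_p(f)| \leq \|f\|_{L^2}/\sqrt{2p+1}$ by Cauchy--Schwarz and $\ell_k \to \infty$, letting $k \to \infty$ forces $\mu_1(f) = 0$, and consequently $\mu_{2\ell_k+2}(f) = 0$ for every $k$.

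Therefore $f$ is orthogonal in $L^2(0,1)$ to the countable family $\{x\} \cup \{x^{2\ell_k+2} : k \geq 1\}$. By Szasz's completeness theorem for Müntz systems, this family is complete in $L^2(0,1)$ as soon as $\sum_{k \geq 1} (2\ell_k+2)^{-1} = \infty$, which follows immediately from the standing assumption $\sum_k 1/\ell_k = \infty$. Hence $f \equiv 0$. The main difficulty in this approach lies in the first step: justifying the identity $J_{\nu_k} A_{\nu_k} = Y_{\nu_k} B_{\nu_k}$ requires the Mercer-type interchange of sum and integral for the diagonal Green's function, which rests on classical trace-class properties of the free resolvent. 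Once the identity is in hand, only its constant term in $z^2$ is needed, and the remainder of the argument reduces to a one-line application of Szasz's theorem.
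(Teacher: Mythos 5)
Your argument is correct and follows essentially the same route as the paper: summing the orthogonality relations against the Kneser--Sommerfeld kernel to obtain $J_\nu A_\nu = Y_\nu B_\nu$, extracting the constant term in $z^2$ (equivalently, letting $z\to 0$ as the paper does) to get the moment identity $\int_0^1 x f\,dx = \int_0^1 x^{2\ell_k+2} f\,dx$, letting $k\to\infty$ to kill the left-hand side, and finally invoking M\"untz--Sz\'asz on the family $\{x^{2\ell_k+2}\}$. The only differences are presentational (you phrase the weighted sum as a diagonal Green's function / Mercer expansion, and you make the Cauchy--Schwarz decay explicit), not structural.
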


\noindent
The proof of this theorem is given in Section 3.2.

\vspace{0.2cm}\noindent
Independently of the completeness result, the asymptotic behavior of the eigenvalues imposes further constraints on admissible perturbations. Specifically, as shown in \cite[Proposition~3.1]{Ser07} (see also \cite[Eq.~(2.10)]{RuSa01}), for each fixed angular momentum \( \ell \), the following expansion holds uniformly on bounded subsets of \( L^2(0,1) \)\,:
\begin{equation}\label{asymptoticsvps}
\lambda_{\ell,n}(q) = \left(n + \tfrac{\ell}{2}\right)^2 \pi^2 + \int_0^1 q(x)\,dx - \ell(\ell+1) + \tilde{\lambda}_{\ell,n}(q)\,,
\end{equation}
with \( (\tilde{\lambda}_{\ell,n}(q) \in \ell_{\R}^2(\mathbb{N}) \)\,. Thus, if two potentials $q$ and $\check{q}$ produce identical eigenvalues for large $n$, their difference $\zeta = \check{q} - q$ must satisfy
\[
\int_0^1 \zeta(x)\,dx = 0\,.
\]
This condition reflects the fact that the mean of the potential appears as the subleading term in the asymptotic expansion and must therefore be preserved under any isospectral deformation.

\vspace{0.2cm}\noindent
We conclude this section with the main result of the paper. It shows that, in three specific cases, the Dirichlet spectra corresponding to two distinct angular momenta uniquely determine the potential in a neighborhood of the zero potential  in the $L^2(0,1)$ topology.

\begin{thm}[Local uniqueness near the zero potential]
	\label{thm:main-uniqueness}
	For \( (\ell_1,\ell_2) \in \{(0,1), (1,2), (0,3)\} \)\,, the knowledge of the Dirichlet spectra corresponding to angular momenta \( \ell_1 \) and \( \ell_2 \) uniquely determines the potential \( q \in L^2(0,1) \) in a  $L^2$-neighborhood of the zero potential.
\end{thm}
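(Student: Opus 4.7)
The overall plan is to linearize the joint spectral map at $q=0$, reduce local uniqueness to a linear completeness statement, and then resolve the latter by combining the explicit half-integer Bessel formulas with the classical Kneser--Sommerfeld expansion.

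\medskip
\noindent\textbf{Linearization and reduction.} By \eqref{differentielle}, the Fr\'echet derivative of $q\mapsto (\lambda_{\ell_1,n}(q),\lambda_{\ell_2,n}(q))_{n\geq 1}$ at $q=0$ sends $\zeta\in L^2(0,1)$ to the double sequence $(\int_0^1\zeta(r)\,\psi_{\ell_k,n}^{2}(r)\,dr)_{n\geq 1,\,k=1,2}$, while the uniform asymptotics \eqref{asymptoticsvps} force $\int_0^1\zeta=0$ for any $\zeta$ in its kernel. The passage from injectivity of this differential to nonlinear local uniqueness would be obtained by an analytic implicit function theorem argument in the spirit of Carlson--Shubin~\cite{CarlShu94}, after checking that the differential has closed range in the appropriate weighted $\ell^2$ target space; this identifies the isospectral set near $q=0$ with a smooth submanifold whose tangent space at the origin coincides with $\ker D_0\Phi$. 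It therefore suffices to prove completeness of $\{\psi_{\ell_k,n}^{2}\}_{n\geq 1,\,k=1,2}$ in $L^{2}_{0}(0,1):=\{\zeta\in L^2(0,1):\int_0^1\zeta=0\}$.

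\medskip
\noindent\textbf{The case $\ell_1=0$.} Using $J_{1/2}(x)=\sqrt{2/(\pi x)}\sin x$ and $j_{1/2,n}=n\pi$, one has $\psi_{0,n}(r)\propto\sin(n\pi r)$, hence $\psi_{0,n}^{2}(r)\propto 1-\cos(2n\pi r)$. Combined with $\int_0^1\zeta=0$, orthogonality of $\zeta$ to every $\psi_{0,n}^{2}$ is equivalent to the vanishing of all cosine Fourier coefficients of $\zeta$ at even frequencies in the cosine orthonormal basis of $L^2(0,1)$, which in turn is equivalent to the reflection antisymmetry $\zeta(1-r)=-\zeta(r)$.

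\medskip
\noindent\textbf{The case $\ell_2\in\{1,2\}$ via Kneser--Sommerfeld.} Since $J_{3/2}$ and $J_{5/2}$ are elementary, each $\psi_{\ell_2,n}^{2}$ is explicit as a trigonometric polynomial in $j_{\ell_2,n}r$ divided by powers of $j_{\ell_2,n}r$. To aggregate all orthogonality conditions into a single complex-analytic identity, I would introduce the generating function
\[
H_{\ell_2}(z) := \sum_{n\geq 1}\frac{1}{j_{\ell_2,n}^{2}-z^{2}}\int_0^1 \zeta(r)\,\psi_{\ell_2,n}^{2}(r)\,dr,
\]
which vanishes identically under the orthogonality hypothesis. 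The Kneser--Sommerfeld formula expresses the kernel $\sum_{n}(j_{\ell_2,n}^{2}-z^{2})^{-1}\psi_{\ell_2,n}^{2}(r)$ in closed form in terms of $J_{\ell_2+1/2}(zr)$, $Y_{\ell_2+1/2}(zr)$ and $J_{\ell_2+1/2}(z)$, so that after substituting the elementary half-integer Bessel expressions, $H_{\ell_2}\equiv 0$ reduces to an explicit integral identity
\[
\int_0^1 \zeta(r)\,K_{\ell_2}(r,z)\,dr \;=\; 0, \qquad z\in\C\setminus\{\pm j_{\ell_2,n}\}_{n\geq 1},
\]
with $K_{\ell_2}$ an elementary kernel built from $\sin(zr)$, $\cos(zr)$, and rational functions of $r$ and $z$. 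Expanding this identity asymptotically as $|z|\to\infty$ along the imaginary axis (or by matching residues at carefully chosen points) should extract a sequence of independent linear constraints on $\zeta$ which, combined with the antisymmetry from the previous step, forces $\zeta\equiv 0$.

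\medskip
\noindent\textbf{Main obstacle.} The crux is the symbolic analysis in the Kneser--Sommerfeld step: writing the reduced kernel $K_{\ell_2}(r,z)$ in a form amenable to asymptotic or residue analysis, and extracting from it constraints genuinely independent of the antisymmetry obtained for $\ell_1=0$. The configuration $(0,1)$, involving only the elementary combination $\sin x/x-\cos x$ inside $J_{3/2}$, should be tractable by direct computation; the configuration $(0,2)$ brings in the more singular $\sin x/x^{2}$ term from $J_{5/2}$, and controlling the resulting behavior of $K_{\ell_2}$ near $r=0$ is the main analytical hurdle. This limitation is presumably the reason why the present approach yields the Rundell--Sacks conjecture only for these two specific pairs of angular momenta.
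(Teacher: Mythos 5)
Your overall plan (linearize the joint spectral map, use \eqref{asymptoticsvps} to force $\int_0^1\zeta=0$, read the $\ell=0$ condition as reflection antisymmetry, invoke Kneser--Sommerfeld for the second channel) follows the paper's skeleton, but the two crucial steps are left as sketches, and in each case the paper proceeds quite differently.

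\textbf{From injectivity to local uniqueness.} You say local uniqueness ``would be obtained by an analytic implicit function theorem argument \ldots after checking that the differential has closed range.'' This is the right shape of statement, but it is exactly as hard as the point you are postponing. The paper establishes it by writing $d_0\mathcal{S}_{\ell_1,\ell_2}=A_{\ell_1,\ell_2}+K_{\ell_1,\ell_2}$ with $K_{\ell_1,\ell_2}$ Hilbert--Schmidt (using the $\ell^2$ convergence of $j_{\nu,n}-(n+\ell/2)$), and with $A_{\ell_1,\ell_2}$ an \emph{isomorphism}; the latter rests on a nontrivial result that the family $\bigl(1,(\Phi_{\ell_1}((n+\ell_1/2)\pi\,\cdot))_n,(\Phi_{\ell_2}((n+\ell_2/2)\pi\,\cdot))_n\bigr)$ is a basis of $L^2(0,1)$ (proved for $(0,1)$ in Rundell--Sacks and for $(0,2)$ in Appendix~A via Ferrers functions and a numerical verification). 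The Fredholm alternative then upgrades the injectivity of $d_0\mathcal{S}$ to bijectivity, and the inverse function theorem applies. Without that basis result you cannot conclude closed range, and your argument has a hole.

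\textbf{Injectivity of the differential.} After the Kneser--Sommerfeld step you propose to extract constraints by ``expanding asymptotically as $|z|\to\infty$ \ldots or by matching residues,'' and then to assert these force $\zeta\equiv 0$. This does not match the paper's argument and it is not clear it can be made to work: the residues of the generating function are precisely the orthogonality conditions you started from, and the large-$|z|$ asymptotics of the kernel $J_\nu(zr)[J_\nu(z)Y_\nu(zr)-Y_\nu(z)J_\nu(zr)]$ return finitely many moment conditions, not a complete system. The paper's actual route is structurally different: it uses the Rundell--Sacks transformation operators $T_\ell=(-1)^{\ell+1}S_\ell\cdots S_1$ to convert the Bessel pairing into a trigonometric one, exposes the identity as the statement that the polynomial differential operator $A_\ell(D)$ applied to $T_\ell[\zeta]$ is \emph{odd} about $x=\tfrac12$, and then translates this (using the ODE $f^{(2\ell)}+\tfrac{4\ell}{x}f^{(2\ell-1)}=g^{(2\ell)}$ built into $S_\ell$) into an explicit singular ODE for $y=\zeta'$. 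For $(0,1)$ the resulting second-order ODE admits only $y\equiv 0$ as an admissible even $H_0^1$ solution, by an integration by parts producing a manifestly positive quadratic form. For $(0,2)$ the ODE is fourth order, Hardy inequalities reduce the even admissible solutions to a one-parameter family, and an explicit compatibility check on $A_2(D)[T_2(\zeta)]$ at $x=\tfrac12$ (computing the value $90\ne 0$) eliminates the remaining parameter. The difficulty of $(0,2)$ is therefore not the $\sin x/x^2$ singularity near $r=0$ as you suggest, but the higher order of the ODE and the need for this final compatibility argument.

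In short, you have correctly identified the two reductions but neither is closed: the Fredholm/basis step is asserted without a plan, and the completeness step is replaced by a heuristic that does not reproduce (and would likely not replace) the paper's transformation-operator and ODE machinery.
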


\begin{rem}
The case \( (\ell_1,\ell_2) = (0,2) \) is also investigated in this paper. 
We show that the differential of the spectral map (see Section~2 for its definition) 
is injective at \( q = 0 \)\,. We conjecture that this differential is an isomorphism.
If this is the case, Theorem \ref{thm:main-uniqueness} would also hold for 
\( (\ell_1,\ell_2) = (0,2) \) by the local inversion theorem. 
We leave this question as an open problem.
\end{rem}

\vspace{0.1cm}\noindent
Our approach is based on a detailed analysis of the spectral problem associated with~\eqref{eq:radial-Schro} for varying values of $\ell$\,, in combination with the classical Kneser--Sommerfeld expansion. This representation plays a central role in establishing connections between the spectral data corresponding to different angular momenta, and in proving completeness results that support uniqueness in the linearized regime. We believe that this uniqueness result holds for any pair of distinct angular momenta \( (\ell_1, \ell_2) \)\,.

\section{Strategy of the Proof}

\subsection{The spectral map} 

We define a spectral map involving the renormalized eigenvalues \( \widetilde{\lambda}_{\ell,n}(q) \) defined by the asymptotic expansion~\eqref{asymptoticsvps} and associated with two distinct angular momenta. We then prove that this spectral map is an immersion at the zero potential and consequently locally injective at this point.

\vspace{0.1cm}\noindent
Let \( \ell_1 \neq \ell_2 \) be two fixed non-negative integers.  
Consider the spectral map
\[
\mathcal{S}_{\ell_1,\ell_2} \,: \, L^2(0,1) \longrightarrow \mathbb{R} \times \ell_{\R}^2(\mathbb{N}) \times \ell_{\R}^2(\mathbb{N})\,,
\]
defined by
\begin{equation}\label{eq:spectral-map-two-ell}
	\mathcal{S}_{\ell_1,\ell_2}(q) = \left( \int_0^1 q(x)\,dx\,,\; \big( \widetilde{\lambda}_{\ell_1,n}(q) \big)_{n \geq 1}\,,\; \big( \widetilde{\lambda}_{\ell_2,n}(q) \big)_{n \geq 1} \right)\,.
\end{equation}

\medskip

\vspace{0.1cm}\noindent
The following proposition, proved in \cite[Theorems~2.3 and~4.1]{Ser07} and based on
\eqref{differentielle}, states that the spectral map is real-analytic and
provides an explicit expression for its Fr\'echet differential at the zero
potential.

\vspace{0.2cm}\noindent
Before stating the proposition, we introduce the following notation.

\begin{nota}
	The spherical Bessel function $j_\ell$ is defined by
	\[
	j_\ell(z) = \sqrt{\frac{\pi z}{2}}\, J_{\nu}(z)\,,
	\qquad \nu=\ell+\tfrac12\,,
	\]
	where $J_\nu$ denotes the Bessel function of the first kind. For $n\in\mathbb{N}$\,,
	we set
	\[
	g_{\ell, n}(x)
	= \frac{j_\ell(j_{\nu,n}x)}{\|\, j_\ell(j_{\nu,n}\,\cdot)\,\|_{L^2(0,1)}}\,.
	\]
\end{nota}

\begin{prop}\label{thm:analyticity-spectral-map-zero}
	Let \( \ell_1 \neq \ell_2 \) be two fixed non-negative integers. 
	The spectral map
	\[
	\mathcal{S}_{\ell_1,\ell_2} : L^2(0,1) \to \mathbb{R} \times \ell_{\mathbb{R}}^2(\mathbb{N}) \times \ell_{\mathbb{R}}^2(\mathbb{N})
	\]
	is real-analytic. Moreover, for every \( \zeta \in L^2(0,1) \)\,, its Fr\'echet
	differential at the zero potential is given by
	\[
	d_0 \mathcal{S}_{\ell_1,\ell_2}(\zeta)
	= \left(
	\langle \zeta, 1\rangle\,, \quad
	\bigl( \langle \zeta, g_{\ell_1,n}^2 -1\rangle \bigr)_{n \geq 1}, \quad
	\bigl( \langle \zeta, g_{\ell_2,n}^2 -1 \rangle \bigr)_{n \geq 1}
	\right)\,,
	\]
	where \(\langle \cdot,\cdot\rangle\) denotes the scalar product in \(L^2(0,1)\)\,.
\end{prop}

\vspace{0.2cm}\noindent
We now state the linearized uniqueness result in terms of the spectral map and its differential.

\begin{thm}[Injectivity of the differential of the spectral map]
	\label{thm:differential-injective}
	Let \( \ell_1 \neq \ell_2 \) be two distinct non-negative integers, and consider the spectral map
	\[
	\mathcal{S}_{\ell_1,\ell_2} : L^2(0,1) \to \mathbb{R} \times \ell_{\mathbb{R}}^2(\mathbb{N}) \times \ell_{\mathbb{R}}^2(\mathbb{N})\,.
	\]
	Then the Fr\'echet differential at the zero potential $q=0$
	\[
	d_0 \mathcal{S}_{\ell_1,\ell_2} \,:\, L^2(0,1) \to \mathbb{R} \times \ell_{\mathbb{R}}^2(\mathbb{N}) \times \ell_{\mathbb{R}}^2(\mathbb{N})\,,
	\]
	is injective, provided that  \( (\ell_1,\ell_2) \in \{(0,1)\,,\, (0,2)\,,\, (1,2)\,,\, (0,3)\} \)\,.
\end{thm}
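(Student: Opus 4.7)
\emph{Proof plan.} By Proposition~\ref{thm:analyticity-spectral-map-zero}, the kernel of \(d_0\mathcal{S}_{\ell_1,\ell_2}\) consists of those \(\zeta\in L^2(0,1)\) satisfying
\[
\int_0^1 \zeta\,dr = 0\quad\text{and}\quad\int_0^1 \zeta(r)\,\psi_{\ell_i,n}^2(r)\,dr=0\quad(i=1,2,\ n\geq 1);
\]
we must prove this kernel is trivial. The first step is to exploit the hypothesis \(\ell_1=0\). Since \(J_{1/2}(x)=\sqrt{2/(\pi x)}\sin x\), one has (up to normalization) \(\psi_{0,n}^2(r)=(1-\cos(2n\pi r))/(2n\pi)\), so the conditions for \(\ell_1=0\) become \(\int_0^1\zeta(r)\cos(2n\pi r)\,dr=0\) for every \(n\geq 0\). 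Decomposing \(\zeta\) in the Fourier basis \(\{1,\cos(2n\pi r),\sin(2n\pi r)\}_{n\geq 1}\) of \(L^2(0,1)\), this forces \(\zeta\) to be \emph{antisymmetric about} \(r=1/2\), i.e.\ \(\zeta(1-r)=-\zeta(r)\). A direct consequence is the identity, valid for all \(\xi\in\mathbb{R}\),
\[
\cos(\xi/2)\,\tilde C(\xi)+\sin(\xi/2)\,\tilde S(\xi)=0,\qquad \tilde C(\xi):=\int_0^1\!\zeta(r)\cos(\xi r)\,dr,\ \tilde S(\xi):=\int_0^1\!\zeta(r)\sin(\xi r)\,dr.
\]

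For the second step, set \(\nu:=\nu_2=\ell_2+\tfrac12\) and apply the Kneser--Sommerfeld expansion,
\[
\sum_{n\geq 1}\frac{J_\nu(j_{\nu,n}r)^2}{(j_{\nu,n}^2-z^2)J_\nu'(j_{\nu,n})^2}=\frac{\pi\,J_\nu(zr)\bigl[J_\nu(z)Y_\nu(zr)-Y_\nu(z)J_\nu(zr)\bigr]}{4\,J_\nu(z)},\quad z\notin\{j_{\nu,n}\}.
\]
Multiplying by \((\pi r/2)\zeta(r)\), integrating on \((0,1)\), and using the \(\ell_2\) orthogonality hypothesis to kill the left-hand side yields the master functional identity
\[
J_\nu(z)\int_0^1 \zeta(r)\,rJ_\nu(zr)Y_\nu(zr)\,dr\;=\;Y_\nu(z)\int_0^1\zeta(r)\,rJ_\nu(zr)^2\,dr\qquad\forall z\notin\{j_{\nu,n}\}.
\]
Since \(\nu\in\{3/2,5/2\}\), the Bessel functions \(J_\nu\) and \(Y_\nu\) are elementary, so this master identity can be written in closed form in terms of \(\sin(2zr),\cos(2zr)\) and rational functions of \(zr\); in particular the Hankel asymptotics truncate into exact expressions.

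The third and most technical step is to extract \(\zeta\equiv 0\) from the master identity combined with the antisymmetry relation. Introducing \(\gamma:=\nu\pi/2+\pi/4\) and using \(J_\nu(z)\sim\sqrt{2/(\pi z)}\cos(z-\gamma)\), \(Y_\nu(z)\sim\sqrt{2/(\pi z)}\sin(z-\gamma)\), the leading \(z^{-3/2}\) coefficient of the master identity reduces, after substituting \(\tilde C(2z)=-\tan z\,\tilde S(2z)\) from Step~1, to a scalar multiple of \(\tilde S(2z)\cos\gamma/\cos z\). For \((\ell_1,\ell_2)=(0,1)\) one has \(\gamma=\pi\) and \(\cos\gamma=-1\neq 0\); combining this with the exact (not merely asymptotic) character of the master identity and a standard Fourier uniqueness argument, one concludes \(\tilde S(2z)\equiv 0\), whence \(\zeta\equiv 0\). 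The main obstacle is the case \((\ell_1,\ell_2)=(0,2)\): then \(\gamma=3\pi/2\), so \(\cos\gamma=0\) and the leading-order contribution vanishes \emph{automatically} by the antisymmetry relation, giving no information. One must then push the Hankel expansion one order further, incorporating the correction \((4\nu^2-1)/(8z)\) in the asymptotics of both \(J_\nu\) and \(Y_\nu\); the resulting identity at order \(z^{-5/2}\) will mix \(\tilde S(2z)\) with \(r^{-1}\)-weighted variants such as \(\int_0^1\zeta(r)\cos(2zr)/r\,dr\), and the remaining work consists in combining these with further \(r\)-weighted avatars of the antisymmetry relation to still recover \(\tilde S(2z)\equiv 0\). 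This finer asymptotic analysis, rather than the Kneser--Sommerfeld reduction itself, is what makes the restriction \(\ell_2\in\{1,2\}\) visible.
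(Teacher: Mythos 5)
Your Steps 1 and 2 coincide with the paper's setup: the $\ell_1=0$ data forces $\zeta$ to be odd about $x=\tfrac12$, and summing the orthogonality relations against the Kneser--Sommerfeld kernel yields exactly the paper's identity $J_\nu(z)\int_0^1\Psi_{\ell_2}(zx)\zeta\,dx+Y_\nu(z)\int_0^1\Phi_{\ell_2}(zx)\zeta\,dx=0$. The problem is Step 3, which is where the theorem actually has to be proved, and there your argument does not go through.

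First, the ``leading $z^{-3/2}$ coefficient'' extraction is vacuous. The quantities $\tilde C(2z)$, $\tilde S(2z)$ and their weighted variants are unknown \emph{functions} of $z$, all of which tend to $0$ as $z\to\infty$ on the real axis by Riemann--Lebesgue; so learning that the leading-order term of the large-$z$ expansion vanishes tells you nothing, and one cannot ``identify coefficients of powers of $z^{-1/2}$'' in an identity whose coefficients are themselves $z$-dependent unknowns. Second, when you expand the elementary products $rJ_\nu(zr)Y_\nu(zr)$ and $rJ_\nu(zr)^2$ for $\nu=\tfrac32,\tfrac52$ you produce terms $\int_0^1\zeta(r)r^{-k}\cos(2zr)\,dr$ with $k\ge 1$, which are divergent for a general $\zeta\in L^2(0,1)$; only the full combination converges. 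This is precisely the obstruction that the paper circumvents with the Rundell--Sacks transformation operators $T_\ell$ (Lemmas~\ref{lemmeRS0} and~\ref{lemme_bessel_sinus}), which replace $\zeta$ by $T_\ell[\zeta]$ and convert the Bessel kernels into pure trigonometric ones, at the price of an exactly controlled differential relation \eqref{ODEfonda} between $\zeta$ and $T_\ell[\zeta]$. Third, even granting the formal manipulations, ``a standard Fourier uniqueness argument'' concluding $\tilde S(2z)\equiv 0$ is asserted, not proved, and for $(0,2)$ you explicitly leave the argument unfinished.

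The paper's actual mechanism is quite different from asymptotics: the functional identity is converted (via the polynomials $P_\ell,Q_{\ell-1}$ and Theorem~\ref{mainthm}) into the statement that $A_\ell(D)\bigl[T_\ell[\zeta]\bigr]$ is odd about $x=\tfrac12$; differentiating and using \eqref{ODEfonda} turns this into a singular linear ODE for $y=\zeta'$ (second order for $(0,1)$, fourth order for $(0,2)$); the indicial analysis at the endpoints plus an integration-by-parts/energy identity with positive weights (supplemented by weighted Hardy inequalities in the $(0,2)$ case) kills the symmetric admissible solutions; and for $(0,2)$ one explicit residual solution must still be excluded by checking that $A_2(D)[T_2(\zeta)]$ fails to be odd at $x=\tfrac12$. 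None of this positivity/ODE machinery appears in your proposal, so the kernel-triviality claim is not established for either pair. Your intuition that $(0,2)$ degenerates at leading order while $(0,1)$ does not is a reasonable echo of the odd/even parity dichotomy of Carlson--Shubin, but as written it is a heuristic, not a proof.
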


\vspace{0.2cm}
\noindent
The proof of Theorem~\ref{thm:differential-injective} is given in Sections~7-9.

\vspace{0.2cm}\noindent
As a byproduct of our analysis, we obtain the following theorem in the case $\ell_1 = 0$ or $1$ and arbitrary $\ell_2$\,.

\begin{thm}[Finite-dimensional kernel]
\label{thm:finite-kernel-ell1-0}
Let $\ell_1 =0$ or $1$\,, and $\ell_2 \geq 1$\,. Consider
\[
\mathcal{S}_{\ell_1,\ell_2} : L^2(0,1) \longrightarrow \mathbb{R} \times \ell^2_{\mathbb{R}}(\mathbb{N}) \times \ell^2_{\mathbb{R}}(\mathbb{N})\,.
\]
Then the Fr\'echet differential of $\mathcal{S}_{\ell_1,\ell_2}$ at the zero potential $q=0$ has a finite-dimensional kernel.
\end{thm}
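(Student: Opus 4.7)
I would approach this by reducing the kernel to an explicit Fourier-analytic condition, converting the orthogonality with respect to $\{\psi_{\ell_2,n}^2\}$ into a single functional identity via the Kneser--Sommerfeld formula, and then reading off a finite-dimensional bound from a moment analysis.

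First, I characterize the kernel. Setting $\nu := \ell_2 + 1/2$ and using $\psi_{0,n}(r)=\sin(n\pi r)$ (up to the omitted normalization) together with the identity $\sin^2(n\pi r) = (1-\cos(2n\pi r))/2$, the conditions $\int_0^1\zeta\,dr=0$ and $\int_0^1\zeta(\psi_{0,n}^2-1)\,dr=0$ for every $n\ge 1$ are jointly equivalent to $\zeta$ being $L^2$-orthogonal to $\{1\}\cup\{\cos(2n\pi r)\}_{n\ge 1}$, i.e., $\zeta(1-r)=-\zeta(r)$. The kernel thus reduces to
\[
K_{\ell_2} := \bigl\{\zeta\in L^2_{\mathrm{odd}}(0,1): A(j_{\nu,n})=0 \text{ for all } n\ge 1\bigr\},\qquad A(k):=\int_0^1 x\zeta(x)J_\nu(kx)^2\,dx.
\]
Applying the Kneser--Sommerfeld expansion as will be used in the proof of Theorem~\ref{thm:differential-injective} in Sections~7--8 converts the infinite list of vanishing conditions into the single functional identity
\[
F(k) := J_\nu(k)B(k) - Y_\nu(k)A(k) \equiv 0 \quad\text{on } \mathbb{C},\qquad B(k):=\int_0^1 x\zeta(x)J_\nu(kx)Y_\nu(kx)\,dx,
\]
with $A$ and $B$ both entire in $k$.

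The core of the argument is to extract scalar constraints by Taylor-expanding $F$ at $k=0$. Writing the moments $m_n:=\int_0^1 x^n\zeta\,dx$ and using the standard power series for $J_\nu,Y_\nu,J_\nu^2,J_\nu Y_\nu$ (whose Taylor coefficients $c_j,d_j,\tilde e_M,\tilde f_N$ are all nonzero), the vanishing of the coefficient of $k^{2p+\nu}$ in $F$ yields, for each $p\ge 0$, a linear relation
\[
(\star_p)\qquad \sum_{j+N=p} c_j\tilde f_N\,m_{2N+1} \;=\; \sum_{j+M=p} d_j\tilde e_M\,m_{2M+2\ell_2+2}.
\]
The oddness $\zeta(1-r)=-\zeta(r)$, integrated against $x^n$ and iterated, provides explicit recurrences $m_0=0$ and $m_{2k}=\sum_{j=0}^{k-1}\alpha_{k,j}m_{2j+1}$ with leading coefficient $\alpha_{k,k-1}=k\ne 0$. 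Substituting these into $(\star_p)$ rewrites it as a linear relation purely in the odd moments $m_1,m_3,\ldots,m_{2\ell_2+2p+1}$, in which the coefficient of the highest-index moment $m_{2\ell_2+2p+1}$ equals $d_0\tilde e_p(\ell_2+p+1)\ne 0$.

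Consequently, the family $\{(\star_p)\}_{p\ge 0}$ is upper-triangular in $(m_{2k+1})_{k\ge 0}$: each $(\star_p)$ determines $m_{2\ell_2+2p+1}$ from the lower-indexed odd moments, so the entire odd-moment sequence of any $\zeta\in K_{\ell_2}$ is determined by the $\ell_2$ free parameters remaining in the initial odd moments $m_1,m_3,\ldots,m_{2\ell_2+1}$ after imposing the single relation $(\star_0)$. Since an $L^2(0,1)$-function is uniquely determined by its moments (polynomials being dense), this yields $\dim K_{\ell_2}\le\ell_2<\infty$. The main obstacle is the verification of the nonvanishing of $d_0\tilde e_p(\ell_2+p+1)$ at every level: this reduces to bookkeeping of Bessel Taylor coefficients but must be checked carefully, and is the one step where the computational content of the argument lives.
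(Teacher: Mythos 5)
Your proposal is correct, but it takes a genuinely different route from the paper's. The paper treats this theorem as a by-product of the ODE machinery it develops for the cases $\ell_2=1,2$: using the transformation operators $S_\ell$, the differential identities of Lemma~\ref{propadjoint} and Lemma~\ref{rundel}(iv), and the parity result of Theorem~\ref{mainthm}, one shows that any $\zeta$ in the kernel (odd about $x=\tfrac12$ by the $\ell_1=0$ condition) also makes $A_{\ell_2}(D)\bigl[T_{\ell_2}[\zeta]\bigr]$ odd in the distributional sense, and differentiating this identity sufficiently many times produces a homogeneous linear ODE for $\zeta$ with weakly singular coefficients whose order depends only on $\ell_2$; finite-dimensionality then follows from classical ODE theory. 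You instead Taylor-expand the Kneser--Sommerfeld identity $J_\nu B - Y_\nu A \equiv 0$ (this is exactly the paper's \eqref{eq:zeta-sum}/\eqref{eq:PhiPsi-zeta}, no forward reference to Sections~7--8 is needed) at $z=0$, obtaining an upper-triangular infinite system in the moments $m_n=\int_0^1 x^n\zeta\,dx$; after substituting the even moments in terms of the odd ones via the $\ell_1=0$ oddness, each $(\star_p)$ with $p\ge1$ determines $m_{2\ell_2+2p+1}$ from earlier odd moments, so that $\dim\le\ell_2$. Your route avoids the transformation-operator calculus and the distributional parity considerations entirely and yields an explicit quantitative bound, whereas the paper's ODE route is the one that actually feeds into the triviality proofs for $\ell_2=1,2$. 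The nonvanishing of the pivots that you flag but do not carry out, $d_0\tilde e_p(\ell_2+p+1)\ne 0$, does hold: the product-series formula for $J_\mu J_\nu$ (Watson~\cite{Wa44}, or DLMF~10.8.3) expresses each Taylor coefficient of $J_\nu^2$ and of $J_\nu Y_\nu=(-1)^{\ell_2+1}J_\nu J_{-\nu}$ as a nonzero quotient of Gamma values, and since $\nu=\ell_2+\tfrac12$ is a half-integer no pole of $\Gamma$ is ever hit, so all the $\tilde e_p$, $\tilde f_N$, $c_j$, $d_j$ are indeed nonzero.
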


\begin{proof}[Sketch of proof]
Our approach shows that any element of this kernel satisfies a homogeneous linear ODE on $(0,1)$, whose order and coefficients depend only on $\ell_2$\,. The cases $(\ell_1,\ell_2)=(0,1)$\,, $(0,2)$\,, $(1,2)$ and $(0,3)$ are treated in detail in this paper, and the extension to arbitrary $\ell_2$ follows without additional difficulty. Consequently, the kernel is spanned by finitely many fundamental solutions and is therefore finite-dimensional.
\end{proof}

\begin{rem}
This result extends Theorem~1 of Shubin Christ~\cite{Sh} in the case $(\ell_1,\ell_2)=(0,1)$\,,
as well as the result of Carlson--Shubin~\cite[Theorem~1.1]{CarlShu94} in the case where
$\ell_1-\ell_2$ is odd, at the level of the Fr\'echet differential at the zero potential $q=0$\,.
In these works, the authors proved that the Fr\'echet differential
$d_q\mathcal{S}_{\ell_1,\ell_2}$ has a finite-dimensional kernel for arbitrary potentials $q$\,.
We conjecture that the same conclusion holds for all pairs of distinct nonnegative integers
$\ell_1\neq \ell_2$\,.
\end{rem}

\vspace{0.2cm}\noindent
We conclude this section with the following result, whose proof is given in Appendix~\ref{app:B}.

\begin{thm}[Closed range]\label{Closed range}
Let \( (\ell_1,\ell_2) \in \{(0,1)\,,\, (1,2)\,,\, (0,3)\} \)\,.
Then the differential of the spectral map at the origin,
\( d_0 \mathcal{S}_{\ell_1,\ell_2} \)\,, has closed range. More precisely, in the case \( (\ell_1,\ell_2) = (0,1) \)\,,
it is surjective, and its range coincides with the entire space
\[
\mathbb{R}
\times \ell^2_{\mathbb{R}}(\mathbb{N})
\times \ell^2_{\mathbb{R}}(\mathbb{N})\,.
\]
\end{thm}

\vspace{0.1cm}\noindent
Consequently, in the case $(0,1)$\,, the standard local inverse function theorem
in Banach spaces, together with Theorem~\ref{thm:differential-injective} and
Theorem~\ref{Closed range}, implies Theorem~\ref{thm:main-uniqueness}.
More precisely, in this case the spectral map provides a local
real-analytic coordinate system.

\vspace{0.2cm}\noindent
The cases $(1,2)$ and $(0,3)$ are slightly different. For this reason, we recall below the following
local injectivity result, which is a direct consequence of the mean value theorem
and the open mapping theorem (see, for instance,~\cite{Abraham88}, Theorem 2.5.10).

\begin{prop}[Local injectivity]\label{prop:local-injectivity}
Let $X$ and $Y$ be Banach spaces, and let
\[
\mathcal{S} : U \subset X \longrightarrow Y
\]
be a $\mathcal C^1$ map defined on an open neighborhood $U$ of a point $x_0 \in X$\,.
Assume that the Fr\'echet differential $d_{x_0} \mathcal{S} \,:\,  X \to Y$
is injective and has closed range.
Then there exists a neighborhood $V \subset U$ of $x_0$ such that
$\mathcal{S}$ is injective on $V$.
\end{prop}

\vspace{0.2cm}\noindent
Consequently, Theorem~\ref{thm:main-uniqueness} follows from the previous
proposition together with Theorem~\ref{thm:differential-injective} and
Theorem~\ref{Closed range}.

\section{The Kneser--Sommerfeld formula}

The so-called Kneser--Sommerfeld expansion is a classical identity involving Bessel functions and their zeros. It appears in Watson's treatise~\cite{Wa44} (Section~15.42), where it is stated as a Fourier--Bessel series identity. However, as first noted by Buchholz in 1947~\cite{Buch47a}, and more recently clarified by Martin~\cite{Martin21}, the formula given in Watson is incorrect: it is missing an integral term on the right-hand side.

\vspace{0.1cm}\noindent
This correction was independently rediscovered in 1981 by Kobayashi~\cite{Ko81}, who analyzed the same identity and showed that Watson's version could not be valid in general. However, a proper historical credit for the correction belongs to Buchholz, whose 1947 paper already pointed out the failure and provided a corrected structure.

\vspace{0.1cm}\noindent
In his detailed analysis, Martin~\cite{Martin21} confirms that a different version  - found in earlier works by Kneser~\cite{Kneser07} for $\nu=0$, Sommerfeld~\cite{Sommerfeld12} for integer values of $\nu\,$, and Carslaw~\cite{Carslaw14} - is in fact the correct identity. Surprisingly, Watson does not include this valid form, despite its mathematical consistency and symmetry.

\vspace{0.1cm}\noindent
The corrected version of the Kneser--Sommerfeld expansion reads:
\begin{equation}
	\sum_{n=1}^{\infty}
	\frac{J_\nu(x j_{\nu,n}) J_\nu(X j_{\nu,n})}
	{(z^2 - j_{\nu,n}^2) \left[ J'_\nu(j_{\nu,n}) \right]^2}
	= \frac{\pi}{4 J_\nu(z)} J_\nu(xz) \left[ J_\nu(z) Y_\nu(Xz) - Y_\nu(z) J_\nu(Xz) \right]\,,
	\label{eq:KS-correct}
\end{equation}
valid for \( 0 \leq x \leq X \leq 1 \)\,, $ \nu \in \R$\,,  $ z \not= j_{\nu,n}$\,, where \( J_\nu \) and \( Y_\nu \) are the Bessel functions of the first and second kinds, respectively, and \( j_{\nu,n} \) denotes the \( n \)-th positive zero of \( J_\nu \)\,.

\vspace{0.1cm}\noindent
The identity (\ref{eq:KS-correct})  plays a fundamental role in the analysis of our inverse spectral problem.

\subsection{An integral Green identity}

In the next subsection, we will present a first completeness theorem for the inverse problem under consideration. The argument relies solely on the moment formula~\eqref{moment} together with the M\"untz--Sz\'asz theorem, and yields uniqueness under a rather restrictive vanishing condition on the moments.

\vspace{0.1cm}\noindent
So, we begin by considering a function $\zeta \in L^2(0,1)$ that is orthogonal to the family of squared eigenfunctions associated with a fixed angular momentum $\ell \in \mathbb{N}$ and vanishing potential. That is,
\begin{equation} \label{eq:orthogonality}
	\int_0^1 x\, \zeta(x) \left[ J_\nu(j_{\nu,n} x) \right]^2 dx = 0\,, \qquad \text{for all } n \geq 1\,,
\end{equation}
where $\nu = \ell + \tfrac{1}{2}$ and $j_{\nu,n}$ denotes the $n$-th positive zero of the Bessel function $J_\nu$\,.

\vspace{0.1cm}\noindent
Our goal is to exploit the structure of the family $\left\{ \left[J_\nu(j_{\nu,n} x)\right]^2 \right\}_{n \geq 1}$ using the Kneser--Sommerfeld expansion~\eqref{eq:KS-correct}, and to derive an additional integral identity satisfied by $\zeta\,$.

\vspace{0.2cm}\noindent
To that end, we consider a weighted sum of the orthogonality relations~\eqref{eq:orthogonality} over all $n \geq 1$\,. By exchanging the order of summation and integration (justified, for instance, by Fubini's theorem), we obtain for all $z$ not equal to any zero $j_{\nu,n}$:
\begin{equation} \label{eq:zeta-sum}
	\int_0^1 x\, \zeta(x)\, J_\nu(xz) \left[ J_\nu(z)\, Y_\nu(xz) - Y_\nu(z)\, J_\nu(xz) \right] dx = 0\,.
\end{equation}
The expression inside the integral corresponds, up to a multiplicative constant, to the kernel appearing in the corrected Kneser--Sommerfeld expansion on the diagonal $x = X$ multiplied by  $J_\nu(z)$. By a standard continuity argument, this identity extends to all complex values of \( z \)\,.

\begin{rem}
The integral kernel appearing in~\eqref{eq:zeta-sum} coincides with the gradient with respect to $q$  of the so-called regular solution 
evaluated at $q=0$ and $x=1$ (see~\cite{Ser07}, Proposition 2.2). 
This observation is consistent with Lemma~A.3 of the present paper.
\end{rem}

\subsection{A first completeness result}

\vspace{0.2cm}\noindent
In this subsection we prove Theorem~\ref{thm:completude}, the completeness result stated in the introduction. Our strategy is to establish first an integral identity, which will then allow us to deduce a moment condition.  
Combining this with the classical M\"untz--Sz\'asz theorem ultimately leads to the desired completeness result, which appears as a direct consequence of the final theorem in this section.

\vspace{0.2cm}\noindent
Recall that, as \( x \to 0 \)\,, the Bessel functions admit the following asymptotic behavior (see \cite{Lebedev72}):
\[
J_\nu(x) \sim \frac{1}{\Gamma(\nu + 1)} \left( \frac{x}{2} \right)^\nu\,, \quad
Y_\nu(x) \sim -\frac{\Gamma(\nu)}{\pi} \left( \frac{2}{x} \right)^\nu \quad \text{for } \nu > 0\,.
\]
Thus, taking the limit \( z \to 0 \) in (\ref{eq:zeta-sum}), we obtain the following closed-form expression:
\begin{equation}\label{moment}
	\int_0^1 x \ \zeta(x)\,  \left(1 - x^{2\nu} \right) dx = 0\,.
\end{equation}

\vspace{0.2cm}\noindent
We can now state a first completeness result based on the classical M\"untz--Sz\'asz theorem (\cite{Muntz12, Muntz14, Szasz16}).  
It is worth emphasizing that, in the following theorem, no zero-mean condition is required for the function $\zeta$.

\begin{thm}[A completeness result]
	\label{thm:muntz}
	Let $\zeta \in L^2(0,1)$ be a real-valued function satisfying (\ref{eq:orthogonality}) for an infinite increasing sequence $\nu_k= \{\ell_k + \tfrac12\}$ of positive half-integers such that
	\[
	\sum_{k=1}^\infty \frac{1}{\ell_k} = +\infty\,.
	\]
	Then $\zeta = 0$ almost everywhere in $(0,1)$\,.
\end{thm}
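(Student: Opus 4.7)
The plan is to convert the orthogonality condition (\ref{eq:orthogonality}) into a countable family of scalar moment identities for the auxiliary function $g(x):=x\zeta(x)\in L^2(0,1)$, and then to invoke the classical M\"untz--Sz\'asz theorem. The starting point is already in place: summing (\ref{eq:orthogonality}) against the Kneser--Sommerfeld expansion and letting $z\to 0$ produces, for each $k\geq 1$, the moment identity (\ref{moment}), which I rewrite as
\[
\int_0^1 g(x)\,dx \;=\; \int_0^1 g(x)\, x^{2\nu_k}\,dx.
\]

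The next step would be to extract the mean of $g$. Since $2\nu_k=2\ell_k+1\to\infty$, the functions $x^{2\nu_k}$ are uniformly bounded by $1$ on $(0,1)$ and converge pointwise to $0$ on $[0,1)$. As $g\in L^1(0,1)$, the dominated convergence theorem yields $\int_0^1 g(x)\, x^{2\nu_k}\,dx\to 0$ as $k\to\infty$, hence $\int_0^1 g(x)\,dx=0$. Combined with the previous identities, this forces $\int_0^1 g(x)\, x^{2\nu_k}\,dx=0$ for every $k\geq 1$.

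To conclude, I would invoke the $L^2$ version of the M\"untz--Sz\'asz theorem: the system $\{1\}\cup\{x^{2\nu_k}:k\geq 1\}$ is complete in $L^2(0,1)$ as soon as $\sum_{k\ge 1} 1/(2\nu_k)=+\infty$. Since $2\nu_k=2\ell_k+1$, this divergence is equivalent to the hypothesis $\sum 1/\ell_k=+\infty$. The function $g$ is thus orthogonal to a complete family in $L^2(0,1)$, which forces $g\equiv 0$, i.e.\ $x\zeta(x)=0$ a.e.\ on $(0,1)$, and hence $\zeta=0$ a.e.

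The conceptual obstacle is the following. The identity (\ref{moment}) directly controls only the \emph{differences} $1-x^{2\nu_k}$, whose closed linear span is a codimension-one subspace of the closed span of $\{1,x^{2\nu_k}\}_{k\geq 1}$; consequently M\"untz--Sz\'asz cannot be applied naively. The decisive idea is the dominated convergence limit $k\to\infty$, which supplies the one missing scalar constraint $\int_0^1 g=0$ and thereby reduces the problem to a standard completeness statement. Once this constraint is in hand, the remainder is routine, and the hypothesis $\sum 1/\ell_k=+\infty$ enters only at the final M\"untz--Sz\'asz step.
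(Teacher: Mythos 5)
Your proposal is correct and follows essentially the same route as the paper: both take the limit $k\to\infty$ in the moment identity~\eqref{moment} to extract $\int_0^1 x\zeta(x)\,dx=0$, substitute back to obtain $\int_0^1 \zeta(x)\,x^{2\ell_k+2}\,dx=0$ for all $k$, and conclude via M\"untz--Sz\'asz under the divergence hypothesis. Your introduction of $g(x)=x\zeta(x)$ and the explicit appeal to dominated convergence are merely cosmetic variants of the paper's argument.
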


\begin{proof}
	Let \(\{ \ell_k \} \subset \mathbb{N}^*\) be the sequence of positive integers appearing above, with \(\sum_{k=1}^\infty \frac{1}{\ell_k} =+ \infty\)\,, and apply identity~\eqref{moment} with  \(\nu_k = \ell_k + \tfrac12\). This yields, as $k \to + \infty$\,,
	\[
	\int_0^1 x\, \zeta(x)\, dx = 0\,.
	\]
	Plugging this back into~\eqref{moment}, we obtain the moment identities
	\[
	\int_0^1 \zeta(x)\, x^{2\ell_k + 2}\, dx = 0 \quad \text{for all } k \in \mathbb{N}\,.
	\]
	By the classical M\"untz--Sz\'asz theorem, this implies that \(\zeta = 0\) almost everywhere in \((0,1)\)\,.
\end{proof}

\vspace{0.2cm}\noindent
As explained at the beginning of this section, Theorem~\ref{thm:muntz} immediately yields Theorem~\ref{thm:completude}, thereby completing the proof of the first completeness result.

\section{Transformation Operators}

Transformation operators play a central role in our inverse spectral analysis.  
For the case \(\ell = 1\), such operators were first introduced by Guillot and Ralston~\cite{GR} in their study of radial Schr\"odinger operators. These operators were also used by C.~Shubin Christ in \cite{Sh}. Carlson and Shubin~\cite{CarlShu94} extended this construction to more general settings, and Rundell and Sacks~\cite{RuSa01} developed a systematic framework valid for all integer orders \(\ell \ge 1\).  
Their method rewrites inner products involving Bessel kernels in terms of trigonometric ones by means of suitable \emph{index-reduction} operators.

\vspace{0.2cm}\noindent
We recall below two key lemmas due to Rundell and Sacks \cite{RuSa01} (also used by Serier \cite{Ser07}). These lemmas provide the basic tools for reducing the Bessel case to the trigonometric one and will be crucial for the fine analysis of isospectral sets developed later.

\begin{nota}
	Let \(\nu = \ell +\tfrac12\) for \(\ell \ge 0\) and \(x\in[0,1]\)\,. Define
	\[
	\Phi_\ell (x) \;=\; \frac{\pi x}{2}\,J_{\nu}(x)^2\,,
	\qquad
	\Psi_\ell (x) \;=\; -\,\frac{\pi x}{2}\,J_{\nu}(x)\,Y_{\nu}(x)\,,
	\]
	where \(J_{\nu}\)\,, \(Y_{\nu}\) are the ordinary Bessel functions.
\end{nota}

\begin{lemma}\label{lemmeRS0}[Rundell--Sacks \cite{RuSa01}]
	\label{rundel}
	For each integer \(\ell \ge 1\) and \(x\in(0,1)\), define the \emph{index-reduction operator}
	\begin{equation}\label{defSl}
		S_\ell [f](x)
		\;=\;
		f(x)
		-4\ell \,x^{2\ell -1}
		\int_x^1 t^{-2\ell}\,f(t)\,dt\,.
	\end{equation}
	Then:
	\begin{enumerate}[(i)]
		\item  \(S_\ell \) is bounded on \(L^2(0,1)\).
		\item Its Hilbert adjoint is
		\begin{equation}
			S_\ell ^*[g](x)
			\;=\;
			g(x)
			-\frac{4\ell}{x^{2\ell}}
			\int_0^x t^{2\ell -1}\,g(t)\,dt\,.
		\end{equation}

		\item It induces a Banach isomorphism
		\[
		L^2(0,1)\;\xrightarrow{\sim}\;
		\bigl\{x\mapsto x^{2\ell }\bigr\}^{\perp}\,,
		\]
		with inverse
		\begin{equation}
			A_\ell [g](x)
			\;=\;
			g(x)
			-\frac{4 \ell }{x^{2\ell +1}}
			\int_0^x t^{2\ell }\,g(t)\,dt\,.
		\end{equation}
		\item If \(g=S_\ell [f]\), then \(f\) and \(g\) satisfy
		\begin{equation}\label{ODEfonda}
			f^{(2\ell)}(x) + \frac{4\ell}{x}\, f^{(2\ell-1)}(x) \;=\; g^{(2\ell)}(x)\,.
		\end{equation}
		\item The operators commute pairwise:
		\[
		S_\ell \,S_m \;=\; S_m\,S_\ell \,,
		\quad  \forall\,\ell,m\in\N\,.
		\]
		\item The functions \(\Phi_\ell \) and \(\Psi_\ell \) satisfy
		\begin{align}
			\Phi_\ell  &= -\,S_\ell ^*\bigl[\Phi_{\ell -1}\bigr]\,,
			&
			\Psi_\ell  &= -\,S_\ell ^*\bigl[\Psi_{\ell -1}\bigr],
			\label{eq_reduc_indice}\\
			\Phi_\ell ' &= -\,A_\ell \bigl[\Phi_{\ell -1}'\bigr]\,,
			&
			\Psi_\ell ' &= -\,A_\ell \bigl[\Psi_{\ell -1}'\bigr]\,.
			\label{eq_reduc_indice_derive}
		\end{align}
		
	\end{enumerate}
\end{lemma}

\vspace{0.3cm}\noindent
We will also need the following complementary result (see Lemma 3.4 in \cite{RuSa01}).
\begin{lemma}\label{propadjoint}
If for $\ell \geq 1$\,,  $f$ and $g \in L^2(0,1)$\,, it holds 
$g = S_\ell^*[f]$ then, in $\mathcal D'(0,1)$\,, we have
\[ g^{(2\ell+1)}(x) + \frac{4\ell}{x} g^{(2\ell)} (x) = f^{(2\ell +1)}(x)\,.\]
\end{lemma}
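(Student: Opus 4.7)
The plan is to derive the identity by a direct induction on the order of differentiation of the defining relation. Since $g = S_\ell^*[f]$ means
\[
x^{2\ell} g(x) = x^{2\ell} f(x) - 4\ell \int_0^x t^{2\ell-1} f(t)\,dt,
\]
and since $t^{2\ell-1} f \in L^1(0,1)$ for $\ell \geq 1$, the right-hand side is absolutely continuous. Differentiating once in $\mathcal{D}'(0,1)$ and dividing by $x^{2\ell-1}$ yields the first-order distributional identity
\[
x\bigl(f'(x) - g'(x)\bigr) = 2\ell\bigl(f(x) + g(x)\bigr).
\]

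From there, I would prove by induction on $k$, for $1 \leq k \leq 2\ell + 1$, that
\[
x\bigl(f^{(k)}(x) - g^{(k)}(x)\bigr) = (2\ell - k + 1)\, f^{(k-1)}(x) + (2\ell + k - 1)\, g^{(k-1)}(x).
\]
Differentiating the left-hand side produces $(f^{(k)} - g^{(k)}) + x(f^{(k+1)} - g^{(k+1)})$, while differentiating the right-hand side produces $(2\ell - k + 1) f^{(k)} + (2\ell + k - 1) g^{(k)}$. Equating and rearranging yields the same formula at index $k+1$ with updated coefficients $(2\ell - k, 2\ell + k)$. The key observation is that the coefficient of $f^{(k-1)}$ decreases by $1$ at each step, so it vanishes precisely at $k = 2\ell + 1$; at that index the identity collapses to
\[
x\bigl(f^{(2\ell+1)}(x) - g^{(2\ell+1)}(x)\bigr) = 4\ell\, g^{(2\ell)}(x),
\]
which, after dividing by $x$ on $(0,1)$, is the asserted conclusion.

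I do not foresee a serious obstacle: the only care required is the justification of each successive differentiation in $\mathcal{D}'(0,1)$. Since we work on the open interval away from the origin, multiplication by $x$ and division by $x$ are smooth operations on $(0,1)$, and the Leibniz rule applies to distributional derivatives, so every step is routine. The whole argument mirrors the proof of Lemma \ref{lemmeRS0}(iv), which treats the analogous identity for $S_\ell$; here one simply starts from the integral representation of $S_\ell^*$ and follows a parallel route, with the vanishing coefficient occurring one order later because $S_\ell^*$ involves the factor $x^{-2\ell}$ rather than $x^{2\ell-1}$.
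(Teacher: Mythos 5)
Your proof is correct and follows essentially the same route as the paper's: you eliminate the integral term from the defining relation for $S_\ell^*$ (by clearing the weight $x^{2\ell}$, differentiating, and dividing, whereas the paper forms $\tfrac{2\ell}{x}g+g'$ directly), and then iterate differentiation of the resulting first-order identity, tracking how the coefficients shift by one per step until the $f^{(k-1)}$-coefficient vanishes at $k=2\ell+1$. Your induction hypothesis $x\bigl(f^{(k)}-g^{(k)}\bigr)=(2\ell-k+1)f^{(k-1)}+(2\ell+k-1)g^{(k-1)}$ is just an algebraic rearrangement of the paper's iterated identity $(2\ell+k)g^{(k)}+xg^{(k+1)}=(-2\ell+k)f^{(k)}+xf^{(k+1)}$, so the two arguments coincide.
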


\begin{proof}
We follow the strategy of \cite{RuSa01} for the proof of \eqref{ODEfonda}. 
We start from 
\begin{equation}\label{eqg}
 g(x) = f(x) - 4 \ell x^{-2\ell} \int_0^x s^{2\ell-1} f(s) \, ds\,,
 \end{equation}
and differentiate once to obtain
\begin{equation}\label{eqgprime}
g'(x) = f'(x) + 8\ell^2 x^{-2\ell -1} \int_0^x s^{2\ell-1} f(s) \, ds - \frac{4\ell}{x} f(x)\, .
 \end{equation}
We then eliminate the integral term by considering  $\frac{2\ell }{x} \times \eqref{eqg} + \eqref{eqgprime}$ :
\[ \frac{2\ell}{x} g(x) + g'(x) = \frac{2\ell }{x} f(x) + f'(x)  -\frac{4\ell}{x} f(x)\,.\]
This leads to 
\[ 2\ell g(x) + x g'(x) = - 2\ell f(x) + xf'(x)\, .\]
Differentiating again, we get
\[ (2\ell+1)g'(x) + xg''(x) = (-2\ell + 1) f'(x) + xf''(x)\,,\]
and iterating $k$ times we obtain
\[ (2\ell +k)g^{(k)}(x) +x g^{(k+1)}(x) = (-2\ell+k)f^{(k)}(x) + x f^{(k+1)}(x)\,.\]
Taking $k = 2\ell$ and dividing by $x$ achieves the proof of the lemma.
\end{proof}

\vspace{0.2cm}\noindent
We now consider the composite operator \(T_\ell\), obtained by composing the index-reduction operators \(S_1,\dots,S_\ell\), which carries Bessel kernels to trigonometric ones.

\begin{lemma}\label{lemmeRS}[Rundell--Sacks \cite{RuSa01}, Serier \cite{Ser07}]
	\label{lemme_bessel_sinus}
	Let us define $T_\ell$ by  $T_1=S_1$ and  for $\ell\geq 2$ by 
	\[
	T_\ell  \;=\; (-1)^{\ell +1}\,S_\ell \,S_{\ell -1}\cdots S_1\,.
	\]
	Then:
	\begin{enumerate}[(i)]
		\item \(T_\ell\) is a bounded and injective operator on \(L^2(0,1)\). For any \(\zeta\in L^2(0,1)\) and \(z\in\C\),
		\begin{align}
			\int_0^1\bigl[2\,\Phi_\ell (z t)-1\bigr]\,\zeta(t)\,dt
			&=\int_0^1\cos(2z t)\,T_\ell [\zeta](t)\,dt\,,
			\label{eq_lemme_bessel_sinus_1}\\
			\int_0^1\Psi_\ell (z t)\,\zeta(t)\,dt
			&=-\tfrac12
			\int_0^1\sin(2z t)\,T_\ell [\zeta](t)\,dt\,.
			\label{eq_lemme_bessel_sinus_2}
		\end{align}
		\item Its adjoint satisfies
		\[
		T_\ell ^*[\cos(2z x)]
		\;=\; 2\,\Phi_\ell (z x)-1\,,
		\qquad
		T_\ell ^*\bigl[-\tfrac12\sin(2z x)\bigr]
		\;=\; \Psi_\ell (z x)\,,
		\]
		with
		\(\ker(T_\ell ^*) = \mathrm{span}\{x^2,x^4,\dots,x^{2\ell}\}\,.\)
		\item \(T_\ell \) provides a Banach isomorphism
		\[
		L^2(0,1)\;\xrightarrow{\sim}\;
		\bigl(\ker T_\ell ^*\bigr)^{\perp}\,,
		\]
		whose inverse is
		\[
		B_\ell [f]
		\;=\;
		(-1)^{\ell+1}\,A_\ell \,A_{\ell -1}\cdots A_1[f]\,.
		\]
		Moreover,
		\[
		\Phi_\ell '(z x)
		\;=\;
		B_\ell [-\sin(2z x)]\,,
		\qquad
		\Psi_\ell '(z x)
		\;=\;
		B_\ell [-\cos(2z x)]\,.
		\]
	\end{enumerate}
\end{lemma}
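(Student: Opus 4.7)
\emph{Proof plan.} The strategy is to bootstrap every assertion from the single-step identities of Lemma~\ref{rundel} by iterating them down to a purely trigonometric base case.

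For part (i), I would start from a scaled version of~\eqref{eq_reduc_indice}: a change of variable in the integral defining $S_\ell^*$ yields $\Phi_\ell(zx) = -S_\ell^*[\Phi_{\ell-1}(z\,\cdot)](x)$ as an $L^2(0,1)$ identity in $x$ (and similarly for $\Psi$). Pairing against $\zeta$ and transferring $S_\ell^*$ across the inner product gives
\[
\int_0^1 \Phi_\ell(zt)\,\zeta(t)\,dt \;=\; -\int_0^1 \Phi_{\ell-1}(zt)\,S_\ell[\zeta](t)\,dt.
\]
Iterating $\ell$ times and invoking commutativity (Lemma~\ref{rundel}(v)) to rearrange the product $S_1 S_2 \cdots S_\ell$ into $(-1)^{\ell+1} T_\ell$ reduces the left side to a pairing against the base functions $\Phi_0(u) = \tfrac12(1-\cos 2u)$ and $\Psi_0(u) = \tfrac12\sin(2u)$. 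The constant in $2\Phi_\ell - 1$ is absorbed using the elementary identity $\int_0^1 S_\ell[\zeta]\,dt = -\int_0^1 \zeta\,dt$, an immediate Fubini computation on the integral kernel of $S_\ell$. Boundedness of $T_\ell$ is inherited termwise from Lemma~\ref{rundel}(i), and injectivity from the fact that each $S_k$ is an isomorphism onto its range by Lemma~\ref{rundel}(iii).

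Part (ii) is a dual reading of (i): for every $\zeta \in L^2(0,1)$ one has $\langle 2\Phi_\ell(z\cdot)-1,\zeta\rangle = \langle T_\ell^*[\cos(2z\cdot)],\zeta\rangle$, which forces the claimed formulas. For the kernel, the inclusion $\mathrm{span}\{x^2,\ldots,x^{2\ell}\} \subset \ker T_\ell^*$ is immediate from commutativity together with $\ker S_k^* = \mathrm{span}\{x^{2k}\}$ (itself the cokernel of the isomorphism in Lemma~\ref{rundel}(iii)). The reverse inclusion I would prove by induction on $\ell$, using the explicit eigenrelation
\[
S_\ell^*[x^{2m}] \;=\; \frac{m-\ell}{m+\ell}\,x^{2m}, \qquad m \ge 1,
\]
obtained by direct calculation. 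If $T_\ell^* f = 0$, set $g = S_\ell^* f$; then $S_1^* \cdots S_{\ell-1}^* g = 0$ places $g$, by inductive hypothesis, in $\mathrm{span}\{x^2,\ldots,x^{2(\ell-1)}\}$, and the eigenrelation lifts $g$ to a preimage $f_0$ in the same span, so $f = f_0 + c\,x^{2\ell}$.

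Part (iii) is then largely formal. Each $S_k$ is bounded below on $L^2(0,1)$ (being a Banach isomorphism onto the closed subspace $\{x^{2k}\}^\perp$), so $T_\ell$ is bounded below and hence has closed range. Combined with injectivity and the closed-range theorem, this yields the Banach isomorphism $T_\ell:L^2(0,1) \to (\ker T_\ell^*)^\perp$, whose inverse must be the composition $B_\ell = (-1)^{\ell+1} A_\ell A_{\ell-1}\cdots A_1$ of the single-step inverses from Lemma~\ref{rundel}(iii). The derivative identities $\Phi_\ell'(zx) = B_\ell[-\sin(2z\,\cdot)](x)$ and $\Psi_\ell'(zx) = B_\ell[-\cos(2z\,\cdot)](x)$ then follow by iterating the rescaled versions of $\Phi_\ell' = -A_\ell[\Phi_{\ell-1}']$ and $\Psi_\ell' = -A_\ell[\Psi_{\ell-1}']$ from~\eqref{eq_reduc_indice_derive} down to the base cases $\Phi_0'(u) = \sin 2u$ and $\Psi_0'(u) = \cos 2u$, the sign $(-1)^{\ell+1}$ being exactly the one absorbed into the definition of $B_\ell$. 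The principal technical hurdle throughout is the sharp identification of $\ker T_\ell^*$: the inclusion one way is trivial, but the reverse rests crucially on the explicit diagonalization of $S_\ell^*$ on even monomials and so cannot be settled by purely abstract functional-analytic means.
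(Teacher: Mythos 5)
The paper does not prove this lemma --- it is quoted verbatim from Rundell--Sacks~\cite{RuSa01} and Serier~\cite{Ser07}, with Lemma~\ref{lemmeRS0} supplied as the building block --- so there is no in-paper argument to compare against. That said, your reconstruction is correct and matches the derivation in those references: iterate the single-step index-reduction identities of Lemma~\ref{lemmeRS0}(vi) down to the base case $\Phi_0(u)=\tfrac12(1-\cos 2u)$, $\Psi_0(u)=\tfrac12\sin 2u$, and track signs via $T_\ell=(-1)^{\ell+1}S_\ell\cdots S_1$. Your Fubini identity $\int_0^1 S_\ell[\zeta]\,dt=-\int_0^1\zeta\,dt$ and the diagonalization $S_\ell^*[x^{2m}]=\tfrac{m-\ell}{m+\ell}\,x^{2m}$ are both right (I checked both computations), and the induction for $\ker T_\ell^*=\mathrm{span}\{x^2,\dots,x^{2\ell}\}$ goes through exactly as you wrote: the inclusion $\supseteq$ follows because the product $\prod_{k=1}^\ell\tfrac{m-k}{m+k}$ vanishes for $1\le m\le\ell$, and the inclusion $\subseteq$ follows by lifting the inductive hypothesis through $S_\ell^*$ since the eigenvalues $\tfrac{m-\ell}{m+\ell}$ are nonzero for $m<\ell$. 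The bounded-below argument for closed range in (iii), combined with the closed-range theorem, is the right way to get the Banach isomorphism onto $(\ker T_\ell^*)^\perp$.

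One small point you gloss over: the inverse of $S_\ell\cdots S_1$ is, a priori, $A_1\cdots A_\ell$ in the \emph{reversed} order, whereas $B_\ell$ is written as $(-1)^{\ell+1}A_\ell\cdots A_1$. This is fine, but it relies on the $A_k$'s commuting, and Lemma~\ref{lemmeRS0}(v) only asserts commutativity of the $S_k$'s. Commutativity of the $A_k$'s does follow --- either by a direct kernel computation analogous to (v), or more abstractly by noting that on $\{x^{2k}\}^\perp\cap\{x^{2m}\}^\perp$ each $A_k$ is the two-sided inverse of the corresponding $S_k$, so commutativity of inverses is inherited --- but it is a step worth making explicit rather than absorbing silently into the definition of $B_\ell$.
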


\section{Green's identity via transformation operators}

In this section, we shall use the transformation operators $T_\ell$ to reformulate relation~\eqref{eq:zeta-sum} in a more convenient form. First, it is straightforward to verify that,
\[
J_\nu(xz)^2
=\frac{2}{\pi\,z\,x}\,\Phi_\ell (zx)\,,
\qquad
J_\nu(xz)\,Y_\nu(xz)
=-\frac{2}{\pi\,z\,x}\,\Psi_\ell (zx)\,.
\]
Substituting these into \eqref{eq:zeta-sum} gives
\begin{equation}\label{eq:PhiPsi-zeta}
J_\nu(z)\,\int_0^1\Psi_\ell (zx)\,\zeta(x)\,dx
\;+\;
Y_\nu(z)\,\int_0^1\Phi_\ell (zx)\,\zeta(x)\,dx
\;=\;0\,.
\end{equation}
Moreover, for functions \(\zeta\) orthogonal to the constants on \([0,1]\), i.e. satisfying 
$
\int_0^1 \zeta(x)\,dx = 0\,,
$
it follows from \eqref{eq:PhiPsi-zeta} that
\begin{equation}\label{eq:PhiPsi-zeta1}
	2\,J_\nu(z)\,\int_0^1\Psi_\ell (zx)\,\zeta(x)\,dx
	\;+\;
	Y_\nu(z)\,\int_0^1\bigl(2\,\Phi_\ell (zx)-1\bigr)\,\zeta(x)\,dx
	\;=\;0\,.
\end{equation}

\vspace{0.1cm}\noindent
We now invoke Lemma~\ref{lemme_bessel_sinus}(i) together with the identity
\[
Y_\nu(x)=(-1)^{\,\ell+1}J_{-\nu}(x)
\]
to deduce that, for such \(\zeta\), \eqref{eq:PhiPsi-zeta1} can be equivalently written as
\begin{equation} \label{eqintegrale}
J_\nu(z)\int_0^1 T_\ell [\zeta](x)\,\sin(2z x)\,dx
\;+\;
(-1)^{\,\ell}\,J_{-\nu}(z)\int_0^1 T_\ell [\zeta](x)\,\cos(2z x)\,dx
\;=\;0\,.
\end{equation}


\vspace{0.2cm}\noindent
We will now employ the half-integer Bessel functions and their associated polynomials \(P_\ell \) and \(Q_\ell \) (see \cite[10.1.19--20]{AS64}). For \(\ell=0,1,2,\dots\) and \(z\in\C\)\,,
\begin{align}
	J_{\ell +\frac12}(z)
	&=\sqrt{\frac{2}{\pi z}}\,
	\Bigl[P_\ell \!\bigl(\tfrac{1}{z}\bigr)\,\sin z
	\;-\;Q_{\ell -1}\!\bigl(\tfrac{1}{z}\bigr)\,\cos z\Bigr]\,, \label{besseldemientier1} \\ 
	J_{-\,\ell - \frac12}(z)
	&=(-1)^{\,\ell}\sqrt{\frac{2}{\pi z}}\,
	\Bigl[P_\ell \!\bigl(\tfrac{1}{z}\bigr)\,\cos z
	\;+\;Q_{\ell -1}\!\bigl(\tfrac{1}{z}\bigr)\,\sin z\Bigr]\,. \label{besseldemientier2}
\end{align}

\vspace{0.1cm}\noindent
The polynomials \(P_\ell (t)\) and \(Q_\ell (t)\), each of degree \(\ell \) in \(t\), satisfy the three-term recursion 
\begin{align}
	P_{\ell+1}(t)
	&=(2\ell+1)\,t\,P_\ell (t)\;-\;P_{\ell -1}(t)\,,
	\qquad \ell \ge 1, \\
	Q_{\ell+1}(t)
	&=(2\ell +3)\,t\,Q_\ell (t)\;-\;Q_{\ell -1}(t)\,,
	\qquad \ell \ge0,
\end{align}
with initial conditions
\[
P_0(t)=1\,,\quad P_1(t)=t\,,
\qquad
Q_{-1}(t)=0\,,\quad Q_0(t)=1\,.
\]
\medskip
\noindent
Note that \( P_\ell(t) \) and \( Q_\ell(t) \) are even polynomials if \( \ell \) is even, and odd polynomials if \( \ell \) is odd.

\vspace{0.1cm}\noindent
As a concrete example, the simplest half-integer Bessel functions are
\begin{equation}\label{bessel12}
	J_{\tfrac12}(z)
	=\sqrt{\frac{2}{\pi z}}\;\sin z\,,
	\qquad
	J_{-\tfrac12}(z)
	=\sqrt{\frac{2}{\pi z}}\;\cos z\,.
\end{equation}

\noindent
The next case corresponds to order \( \pm \tfrac{3}{2} \):
\begin{equation}\label{bessel32}
	J_{\tfrac{3}{2}}(z)
	=\sqrt{\frac{2}{\pi z}} \left( \frac{\sin z}{z} - \cos z \right)\,,
	\qquad
	J_{-\tfrac{3}{2}}(z)
	=\sqrt{\frac{2}{\pi z}} \left( -\frac{\cos z}{z} - \sin z \right)\,.
\end{equation}

\vspace{0.1cm}\noindent
The first few polynomials \(P_\ell (t)\) and \(Q_\ell (t)\) generated by the recursion  relations are:
\begin{equation}\label{polynome}
\begin{aligned}
	P_0(t)&=1\,,           & Q_{-1}(t)&=0\,,\\
	P_1(t)&=t\,,           & Q_0(t)   &=1\,,\\
	P_2(t)&=3t^2-1\,,      & Q_1(t)   &=3t\,,\\
	P_3(t)&=15t^3-6t\,,    & Q_2(t)   &=15t^2-1\,.
\end{aligned}
\end{equation}

\vspace{0.1cm}\noindent
From the foregoing we obtain the following:

\begin{prop}\label{lelemmedebase}
	Let \(\zeta\in L^2(0,1)\) be orthogonal to the constants and suppose further that for $\nu= \ell + \frac12$\,,
	\[
	\int_0^1 x\,\zeta(x)\,J_\nu\bigl(j_{\nu,n}\,x\bigr)^2\,dx
	\;=\;0\,,
	\qquad\text{for all }n\ge1\,.
	\]
	Then for every \(z\in\C\) and every integer \(\ell \ge 1\)\,,
	\begin{equation}\label{eqintegraledebase}
	\int_0^1 T_\ell [\zeta](x)\,
	\Bigl[
	P_\ell \!\bigl(\tfrac{1}{z}\bigr)\,\cos (z((2x-1))
	\;-\;
	Q_{\ell -1}\!\bigl(\tfrac1z\bigr)\,\sin (z(2x-1))
	\Bigr]
	\,dx
	\;=\;0\,.
	\end{equation}
\end{prop}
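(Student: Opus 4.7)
The plan is to derive Proposition \ref{lelemmedebase} directly from identity \eqref{eqintegrale} by substituting the closed-form expressions \eqref{besseldemientier1}--\eqref{besseldemientier2} for the half-integer Bessel functions and then reorganising the resulting trigonometric terms via classical angle-addition formulas. Since \eqref{eqintegrale} is already established (under the orthogonality hypothesis and the constraint $\int_0^1\zeta=0$, both of which are part of our assumptions), the remaining work is purely algebraic.

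First, I would plug \eqref{besseldemientier1} and \eqref{besseldemientier2} into \eqref{eqintegrale}. Both $J_{\nu}(z)$ and $J_{-\nu}(z)$ carry the common prefactor $\sqrt{2/(\pi z)}$, which can be divided out (for $z\in\mathbb{C}^\ast$ off the real axis at least, and then extended by analytic continuation). The sign $(-1)^\ell$ multiplying $J_{-\nu}(z)$ in \eqref{eqintegrale} combines with the $(-1)^\ell$ appearing in \eqref{besseldemientier2} to produce $+1$. After this simplification, the identity takes the form
\begin{align*}
&\bigl[P_\ell(\tfrac1z)\sin z-Q_{\ell-1}(\tfrac1z)\cos z\bigr]\int_0^1 T_\ell[\zeta](x)\sin(2zx)\,dx\\
&\qquad+\bigl[P_\ell(\tfrac1z)\cos z+Q_{\ell-1}(\tfrac1z)\sin z\bigr]\int_0^1 T_\ell[\zeta](x)\cos(2zx)\,dx=0.
\end{align*}

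Next, I would collect the terms according to the factors $P_\ell(1/z)$ and $Q_{\ell-1}(1/z)$. The coefficient of $P_\ell(1/z)$ becomes $\int T_\ell[\zeta](x)[\sin z\sin(2zx)+\cos z\cos(2zx)]\,dx$, which by the cosine addition formula equals $\int T_\ell[\zeta](x)\cos(z(1-2x))\,dx=\int T_\ell[\zeta](x)\cos(z(2x-1))\,dx$ since cosine is even. The coefficient of $Q_{\ell-1}(1/z)$ is $\int T_\ell[\zeta](x)[\sin z\cos(2zx)-\cos z\sin(2zx)]\,dx=\int T_\ell[\zeta](x)\sin(z(1-2x))\,dx=-\int T_\ell[\zeta](x)\sin(z(2x-1))\,dx$. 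Putting these together yields exactly the identity \eqref{eqintegraledebase}.

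To conclude, I would observe that the identity has been obtained a priori for $z\in\mathbb{C}^\ast$, but the left-hand side is an entire function of $z$ (the integrand is entire in $z$ uniformly on $[0,1]$, and $P_\ell(1/z)\cos(z(2x-1))-Q_{\ell-1}(1/z)\sin(z(2x-1))$ is entire once the parities of $P_\ell$ and $Q_{\ell-1}$ are taken into account: odd powers of $1/z$ are paired with odd trigonometric functions whose Taylor expansion at $0$ provides matching factors of $z$, and similarly for even indices). Hence the identity extends to all $z\in\mathbb{C}$.

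There is no real obstacle in this argument; it is essentially a careful bookkeeping exercise. The only subtle point is checking that the apparent singularity at $z=0$ in $P_\ell(1/z)$ and $Q_{\ell-1}(1/z)$ is spurious once multiplied by the appropriate trigonometric factor, which follows from the stated parities of these polynomials and standard Taylor expansions of $\sin$ and $\cos$ at $0$.
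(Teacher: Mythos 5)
Your derivation is correct and is exactly the route the paper itself takes: its one-line proof says the result "follows immediately from \eqref{eqintegrale}, \eqref{besseldemientier1} and \eqref{besseldemientier2}, by a straightforward application of elementary trigonometric identities," and you have simply carried out that bookkeeping in full. One small imprecision in your closing remark: the pointwise kernel $P_\ell(\tfrac1z)\cos(z(2x-1))-Q_{\ell-1}(\tfrac1z)\sin(z(2x-1))$ is \emph{not} entire in $z$ for $\ell\ge 2$ (for instance $P_2(\tfrac1z)=3/z^2-1$ multiplied by the even function $\cos(z(2x-1))$ still has a double pole at $z=0$, and the parities do not help because even negative powers meet even positive ones). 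The clean fix is to multiply \eqref{eqintegraledebase} through by $z^{\ell}$, after which the integrand is genuinely entire; this is precisely what the paper does in Remark~\ref{ipp}, and it shows that \eqref{eqintegraledebase} should really be read for $z\ne 0$, with the $z\to 0$ limit extracted only after this normalization.
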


\begin{proof}
	The result follows immediately from \eqref{eqintegrale}, \eqref{besseldemientier1} and  \eqref{besseldemientier2}, by a straightforward application of elementary trigonometric identities.
\end{proof}

\begin{rem}
In the foregoing we have relied on the transformation operators \(S_\ell \) and \(T_\ell \) defined for \(\ell \ge 1\)\,.  For the case \(\ell=0\)\,, (i.e $\nu = \frac12$)\,, one obtains an analogous reduction by using the explicit formulas (\ref{bessel12}). It follows that
\[
\Phi_0(x)=\frac{1-\cos(2x)}{2}\,,
\qquad
\Psi_0(x)=\tfrac12\sin(2x)\,.
\]
Hence, under the hypotheses of Proposition~\ref{lelemmedebase}, it follows from \eqref{eq:PhiPsi-zeta1} that
\begin{equation}\label{lecasl=0}
\int_0^1 \zeta(x)\,\cos\bigl(z(2x-1)\bigr)\,dx = 0
\quad\text{for all }z\in\C\,.
\end{equation}
In other words, if one conventionally sets $T_0 = \mathrm{Id}$, then relation \eqref{eqintegraledebase} also extends to the case $\ell = 0\,$.
\end{rem}

\begin{rem}\label{ipp}
	Note that if one multiplies \eqref{eqintegraledebase} by \(z^\ell \) and then sets \(z=0\), it follows that for $\ell \geq 1$\,,
	\[
	\int_0^1 T_\ell [\zeta](x)\,dx = 0\,.
	\]
\end{rem}

\section{From Green's integral identities to differential relations}

Let \(\zeta \in L^2(0,1)\) be orthogonal to the constants and satisfy the hypotheses of Proposition~\ref{lelemmedebase}.  
Then, for each integer \(\ell \geq 0\), we shall show that the integral identity \eqref{eqintegraledebase} gives rise to a differential relation satisfied by the transformed function \(T_\ell[\zeta]\)\,.

\vspace{0.2cm}
\noindent
We introduce the reflection symmetry about the midpoint \( x = \tfrac{1}{2} \), defined by the involutive map
\[
\sigma(x) := 1 - x\,.
\]
This transformation naturally arises in several parts of the analysis, particularly when exploiting parity properties. Throughout the paper, we refer to this symmetry simply as \( \sigma \)\,.

\subsection{The case $\ell =0$}

By standard Fourier series theory, the relation (\ref{lecasl=0}) implies that \(\zeta\) is an odd function with respect to the midpoint \(x_0=\tfrac12\), i.e.
\[
\zeta(x) = -\,\zeta\bigl(\sigma(x)\bigr)
\quad\text{for almost every } x \in [0,1]\,.
\]

\subsection{The case $\ell=1$} 
According to \eqref{polynome}, and setting $g = T_{1}(\zeta) = S_{1}(\zeta)$, equation \eqref{eqintegraledebase} for $\ell = 1$ can be written as
\begin{equation}\label{verif}
	\int_{0}^{1} g(x)\,\Bigl[\tfrac{1}{z}\cos\bigl(z(2x-1)\bigr)
	-\sin\bigl(z(2x-1)\bigr)\Bigr]
	\,dx
	=0\,.
\end{equation}
Now, we define
\[
G(x) = \int_{0}^{x} g(t)\,\mathrm{d}t\,.
\]
By Remark~\ref{ipp}, \(G(1)=0\), so using integration by parts, we deduce
\begin{equation}
	\int_{0}^{1} \bigl[\,2G(x) - g(x)\bigr]\sin\bigl(z(2x-1)\bigr)\,\mathrm{d}x = 0.
\end{equation}
In other words, \(2G(x)-g(x)\) is an even function with respect to \(x=\tfrac12\)\,.  


\subsection{The case $\ell \geq 2$}

Before stating the result, we briefly recall the notion of symmetry for distributions.

\begin{defi}
	Let \( \sigma(x) := 1 - x \) denote the reflection with respect to the midpoint of the interval \( [0,1] \). This map induces a natural action on distributions \( T \in \mathcal{D}'(0,1) \) via pullback, defined by duality:
	\[
	\langle \sigma_* T, \varphi \rangle := \langle T, \varphi \circ \sigma \rangle, \qquad \text{for all } \varphi \in \mathcal{D}(0,1).
	\]
	We say that a distribution \( T \in \mathcal{D}'(0,1) \) is \emph{even} if \( \sigma_* T = T \)\,, and \emph{odd} if \( \sigma_* T = -T \)\,.
\end{defi}

\begin{rem}
	There is a classical characterization of even and odd distributions on the interval \( (0,1) \) using Fourier series adapted to the symmetry with respect to the midpoint \( x = \tfrac{1}{2} \)\,. 
	
	\vspace{0.1cm}\noindent
	Consider the orthonormal basis of \( L^2(0,1) \) formed by the functions
	\[
	\phi_n(x) := \sqrt{2} \cos\big(\pi n (2x - 1)\big)\,, \quad  n \geq 0 \mbox{ and } 
	\psi_n(x) := \sqrt{2} \sin\big(\pi n (2x - 1)\big)\,,  \quad n \geq 1\,.
	\]
	Each \( \phi_n \) is even with respect to \( x = \tfrac{1}{2} \)\,, while each \( \psi_n \) is odd.
	
	\vspace{0.1cm}\noindent
	Then, a distribution \( T \in \mathcal{D}'(0,1) \) is odd with respect to \( x = \tfrac{1}{2} \) if and only if
	\[
	\langle T, \phi_n \rangle = 0 \qquad \text{for all } n \geq 1\,.
	\]
	Similarly, \( T \) is even if and only if \( \langle T, \psi_n \rangle = 0 \) for all \( n \geq 1 \)\,.
	
	\vspace{0.2cm}\noindent
	Equivalently, a distribution \( T \in \mathcal{D}'(0,1) \) is odd with respect to the midpoint \( x = \tfrac{1}{2} \) if
	\[
	\langle T, \cos\big(z(2x - 1)\big) \rangle = 0 \qquad \text{for all } z \in \mathbb{C}\,,
	\]
	and it is even if
	\[
	\langle T, \sin\big(z(2x - 1)\big) \rangle = 0 \qquad \text{for all } z \in \mathbb{C}\,.
	\]
	
\end{rem}

\vspace{0.4cm}\noindent
We now introduce the following transformed polynomials:
\[
\widetilde{P}_\ell(z) := z^\ell P_\ell\left(\frac{1}{z}\right)\,, \qquad
\widetilde{Q}_{\ell-1}(z) := z^\ell Q_{\ell-1}\left(\frac{1}{z}\right)\,,
\]
where \( P_\ell(t) \) and \( Q_{\ell-1}(t) \) are as previously defined.

\medskip
\noindent
By construction, \( \widetilde{P}_\ell(z) \) is an even polynomial for all \( \ell \), while \( \widetilde{Q}_{\ell-1}(z) \) is always odd, regardless of the parity of \( \ell \).  We list below the first few examples:
\begin{align*}
	\widetilde{P}_1(z) &= 1\,, 
	&\qquad
	\widetilde{Q}_0(z) &= z\,, \\[0.3em]
	\widetilde{P}_2(z) &= 3 - z^2\,, 
	&\qquad
	\widetilde{Q}_1(z) &= 3z\,, \\[0.3em]
	\widetilde{P}_3(z) &= 15 - 6z^2\,, 
	&\qquad
	\widetilde{Q}_2(z) &= 15z - z^3\,.
\end{align*}

\vspace{0.1cm}\noindent
We also introduce the differential operator
\[
\widetilde{D} := \frac{1}{2i} \frac{d}{dx}\,,
\]
so that for any polynomial \( R(z) = \sum_k a_k z^k \), the corresponding operator \( R(\widetilde{D}) \) acts on a function \( f \) via
\[
R(\widetilde{D})f(x) = \sum_k a_k (\widetilde{D}^k f)(x)\,.
\]

\medskip
\noindent
We now proceed by multiplying the identity~\eqref{eqintegraledebase} by \( z^\ell \).  Under the assumptions of Lemma~\ref{lelemmedebase}, we obtain the following identity.

\begin{lemma}\label{symetrie}
	Let \( \zeta \in L^2(0,1) \) satisfy the hypotheses of Lemma~\ref{lelemmedebase}, and let \( g := T_\ell[\zeta] \)\,. Then, one has for all $z \in \C$\,,
	\[
	\int_0^1 g(x)\,\cdot\, \Bigl[
	\widetilde{P}_\ell(\widetilde{D}) + i\,\widetilde{Q}_{\ell-1}(\widetilde{D}) \Bigr] \cos(z(2x - 1))\,dx = 0\,.
	\]
\end{lemma}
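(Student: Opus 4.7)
The plan is to deduce the lemma from the Green-type identity \eqref{eqintegraledebase} through a symbolic manipulation that rewrites the scalar polynomials $P_\ell(1/z)$ and $Q_{\ell-1}(1/z)$ appearing in front of the trigonometric functions as differential operators $\widetilde{P}_\ell(\widetilde{D})$ and $\widetilde{Q}_{\ell-1}(\widetilde{D})$ acting on $\cos(z(2x-1))$.

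The first step is to multiply \eqref{eqintegraledebase} by $z^\ell$. Using the definitions $\widetilde{P}_\ell(z) = z^\ell P_\ell(1/z)$ and $\widetilde{Q}_{\ell-1}(z) = z^\ell Q_{\ell-1}(1/z)$, this yields
$$\int_0^1 g(x)\bigl[\widetilde{P}_\ell(z)\cos(z(2x-1)) - \widetilde{Q}_{\ell-1}(z)\sin(z(2x-1))\bigr]\,dx = 0.$$
It therefore suffices to establish the pointwise identity
$$\bigl[\widetilde{P}_\ell(\widetilde{D}) + i\,\widetilde{Q}_{\ell-1}(\widetilde{D})\bigr]\cos(z(2x-1)) = \widetilde{P}_\ell(z)\cos(z(2x-1)) - \widetilde{Q}_{\ell-1}(z)\sin(z(2x-1)),$$
after which the conclusion follows by substitution into the integral.

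To verify this pointwise identity, I would decompose $\cos(z(2x-1)) = \tfrac{1}{2}\bigl(e^{iz(2x-1)} + e^{-iz(2x-1)}\bigr)$ and observe that the exponentials $e^{\pm iz(2x-1)}$ are eigenfunctions of $\widetilde{D} = \frac{1}{2i}\frac{d}{dx}$ with eigenvalues $\pm z$. Hence, for any polynomial $R$, one has $R(\widetilde{D})e^{\pm iz(2x-1)} = R(\pm z)e^{\pm iz(2x-1)}$. The decisive ingredients are then the parity properties recorded just above the lemma: $\widetilde{P}_\ell$ is even, so $\widetilde{P}_\ell(\widetilde{D})\cos(z(2x-1)) = \widetilde{P}_\ell(z)\cos(z(2x-1))$; and $\widetilde{Q}_{\ell-1}$ is odd, so $\widetilde{Q}_{\ell-1}(\widetilde{D})\cos(z(2x-1)) = i\,\widetilde{Q}_{\ell-1}(z)\sin(z(2x-1))$. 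The prefactor $i$ in front of $\widetilde{Q}_{\ell-1}(\widetilde{D})$ then combines with this $i$ to produce the desired $-\widetilde{Q}_{\ell-1}(z)\sin(z(2x-1))$, which matches the right-hand side above.

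There is no genuine obstacle here: the argument is essentially a diagonalisation of $\widetilde{D}$ on the basis of complex exponentials. The only points that require care are the parity of $\widetilde{P}_\ell$ and $\widetilde{Q}_{\ell-1}$ (immediate from the three-term recursion or from the explicit list preceding the lemma) and the careful bookkeeping of the factor $i$ which converts the odd symbol into a sine and produces the correct sign.
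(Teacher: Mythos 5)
Your proposal is correct, and it fills in precisely the computation that the paper leaves implicit: the text simply announces ``multiplying the identity~\eqref{eqintegraledebase} by $z^\ell$'' and states the lemma, while your diagonalisation of $\widetilde{D}$ on complex exponentials (together with the evenness of $\widetilde{P}_\ell$ and oddness of $\widetilde{Q}_{\ell-1}$) is exactly the bookkeeping needed to convert the scalar prefactors $\widetilde{P}_\ell(z)$, $\widetilde{Q}_{\ell-1}(z)$ into the operators $\widetilde{P}_\ell(\widetilde{D})$, $i\,\widetilde{Q}_{\ell-1}(\widetilde{D})$ with the right signs. This is the same approach as the paper, spelled out in full; the paper's remark after the lemma checks only the $\ell=1$ case, which your argument reproduces as a special instance.
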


\begin{rem}
	As a consistency check, let us consider the case \( \ell = 1 \)\,. In this case, one has
	\[
	\widetilde{P}_1(\widetilde{D}) + i\,\widetilde{Q}_0(\widetilde{D}) = 1 + \frac{1}{2} \frac{d}{dx}\,.
	\]
	Thus, we obtain:
	\[
	\int_0^1 g(x)\left(1 + \frac{1}{2} \frac{d}{dx}\right)\cos\big(z(2x - 1)\big)\,dx 
	= \int_0^1 g(x)\,\left[\cos\big(z(2x - 1)\big) - z \sin\big(z(2x - 1)\big)\right]\,dx = 0\,,
	\]
	as already observed in~\eqref{verif}.
\end{rem}

\medskip
\noindent
In particular, Lemma~\ref{symetrie} implies that the distribution
\[
\left( \widetilde{P}_\ell(\widetilde{D}) - i\,\widetilde{Q}_{\ell-1}(\widetilde{D}) \right) g(x)
\]
is odd with respect to the midpoint \( x = \tfrac{1}{2} \)\,. This follows from the fact that \( \widetilde{P}_\ell \) is an even polynomial, while \( \widetilde{Q}_{\ell-1} \) is odd.

\vspace{0.3cm}\noindent
We are now left with understanding more precisely the differential operator
\[
\left( \widetilde{P}_\ell(\widetilde{D}) - i\,\widetilde{Q}_{\ell-1}(\widetilde{D}) \right)\,.
\]
Using the recursion  relations satisfied by the polynomials \( P_\ell \) and \( Q_\ell \), and introducing
\[
\widetilde{A}_\ell(z) := \widetilde{P}_\ell(z) - i\,\widetilde{Q}_{\ell-1}(z)\,,
\]
we obtain the following lemma.

\begin{lemma}
	For every integer \( \ell \geq 1 \), the polynomials \( \widetilde{A}_\ell(z) \) satisfy the recursion  relation
	\[
	\widetilde{A}_{\ell+1}(z) - (2\ell+1) \widetilde{A}_\ell(z) + z^2 \widetilde{A}_{\ell-1}(z) = 0\,,
	\]
	with initial values
	\[
	\widetilde{A}_0(z) = 1\,, \qquad \widetilde{A}_1(z) = 1 - i z\,.
	\]
\end{lemma}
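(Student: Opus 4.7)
The plan is a direct algebraic verification: translate the three-term recursions satisfied by $P_\ell$ and $Q_\ell$ into recursions for the transformed polynomials $\widetilde{P}_\ell$ and $\widetilde{Q}_{\ell-1}$, observe that both transformed families obey the \emph{same} recursion, and then combine them linearly with factor $-i$ to obtain the claimed identity for $\widetilde{A}_\ell$.

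Concretely, I would first handle $\widetilde{P}_\ell(z)=z^\ell P_\ell(1/z)$. Substituting $t=1/z$ in $P_{\ell+1}(t)=(2\ell+1)tP_\ell(t)-P_{\ell-1}(t)$ and multiplying by $z^{\ell+1}$ yields
\[
\widetilde{P}_{\ell+1}(z)=(2\ell+1)\,\widetilde{P}_\ell(z)-z^2\,\widetilde{P}_{\ell-1}(z),\qquad \ell\ge 1.
\]
For the $Q$'s the index bookkeeping is slightly more delicate since the object appearing in $\widetilde{A}_\ell$ is $\widetilde{Q}_{\ell-1}(z)=z^\ell Q_{\ell-1}(1/z)$. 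Shifting $\ell\mapsto\ell-1$ in the original recursion $Q_{\ell+1}(t)=(2\ell+3)tQ_\ell(t)-Q_{\ell-1}(t)$ gives $Q_\ell(t)=(2\ell+1)tQ_{\ell-1}(t)-Q_{\ell-2}(t)$ for $\ell\ge 1$; substituting $t=1/z$ and multiplying by $z^{\ell+1}$ yields
\[
\widetilde{Q}_\ell(z)=(2\ell+1)\,\widetilde{Q}_{\ell-1}(z)-z^2\,\widetilde{Q}_{\ell-2}(z),\qquad \ell\ge 1,
\]
where I use the convention $\widetilde{Q}_{-1}(z)=0$ consistent with $Q_{-1}=0$.

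With both transformed sequences satisfying the identical three-term recursion with the same coefficients $(2\ell+1)$ and $z^2$, linearity gives, for $\ell\ge 1$,
\[
\widetilde{A}_{\ell+1}(z)=\widetilde{P}_{\ell+1}(z)-i\widetilde{Q}_\ell(z)=(2\ell+1)\bigl[\widetilde{P}_\ell-i\widetilde{Q}_{\ell-1}\bigr](z)-z^2\bigl[\widetilde{P}_{\ell-1}-i\widetilde{Q}_{\ell-2}\bigr](z),
\]
which is exactly $\widetilde{A}_{\ell+1}(z)-(2\ell+1)\widetilde{A}_\ell(z)+z^2\widetilde{A}_{\ell-1}(z)=0$. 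I would close the proof by checking the two base cases from the explicit formulas in \eqref{polynome}: $\widetilde{A}_0(z)=\widetilde{P}_0(z)-i\widetilde{Q}_{-1}(z)=1$ and $\widetilde{A}_1(z)=\widetilde{P}_1(z)-i\widetilde{Q}_0(z)=1-iz$.

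There is essentially no conceptual obstacle; the only point that requires care is the index shift in the $Q$ recursion, because the objects appearing in $\widetilde{A}_\ell$ are $\widetilde{Q}_{\ell-1}$ rather than $\widetilde{Q}_\ell$, and one must also ensure that the convention $\widetilde{Q}_{-1}=0$ is used so that the recursion is valid already at $\ell=1$.
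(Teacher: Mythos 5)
Your proof is correct, and it is exactly the direct verification that the paper leaves implicit: the paper simply asserts that the lemma follows from the recursion relations for $P_\ell$ and $Q_\ell$ without writing out the algebra. Your careful index bookkeeping (deriving the uniform recursion for the $\widetilde{Q}$-family from the shifted $Q$-recursion, and using the convention $\widetilde{Q}_{-1}=0$) is precisely the content needed to justify that one-line assertion.
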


\vspace{0.3cm}\noindent
At this stage, the polynomials \( \widetilde{A}_\ell(z) \) have complex coefficients. To simplify the situation, we set  $A_\ell (t) = \tilde A_\ell (\frac{t}{2i})$. We obtain  the following result, which also holds for \( \ell = 0 \) and \( \ell = 1 \):

\begin{thm}\label{mainthm}
	Let \( \{A_\ell(t)\}_{\ell \in \mathbb{N}} \) be the sequence of polynomials defined recursively by
	\[
	A_0(t) = 1\,, \qquad A_1(t) = 1 - \frac{t}{2}\,,
	\]
	and for all \(\ell \geq 1 \)\,,
	\[
	A_{\ell+1}(t) = (2\ell + 1)\,A_\ell(t) + \frac{t^2}{4}\,A_{\ell-1}(t)\,.
	\]
	Suppose that \( \zeta \) satisfies the hypotheses of Lemma~\ref{lelemmedebase}. Then, in the sense of distributions, the function
	\[
	A_\ell\bigl(D\bigr)\bigl[T_\ell[\zeta]\bigr]\,,
	\]
	where \( D = \frac{d}{dx} \), is odd with respect to the midpoint \( x = \tfrac{1}{2} \)\,.
\end{thm}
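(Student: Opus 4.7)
The plan is to assemble Theorem~\ref{mainthm} from three ingredients already in place: the distributional oddness derived from Lemma~\ref{symetrie}, the three-term recursion for the complex polynomials $\widetilde{A}_\ell(z)=\widetilde{P}_\ell(z)-i\widetilde{Q}_{\ell-1}(z)$ stated just above the theorem, and the rescaling $z=t/(2i)$ that converts $\widetilde{A}_\ell$ into a polynomial $A_\ell$ with real coefficients acting by $D=d/dx$.

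First, I would start from the identity
$\int_0^1 g(x)\,[\widetilde{P}_\ell(\widetilde{D})+i\widetilde{Q}_{\ell-1}(\widetilde{D})]\cos(z(2x-1))\,dx=0$
of Lemma~\ref{symetrie}, with $g=T_\ell[\zeta]$ and $\ell \geq 1$. Integrating by parts in $x$ transfers each occurrence of $\widetilde{D}=(2i)^{-1}d/dx$ onto $g$ with a sign change, so $+i\widetilde{Q}_{\ell-1}(\widetilde{D})$ becomes $-i\widetilde{Q}_{\ell-1}(\widetilde{D})$ acting on $g$ (while $\widetilde{P}_\ell$, involving only even powers of $\widetilde{D}$, is unchanged). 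The resulting identity $\langle\widetilde{A}_\ell(\widetilde{D})[g],\cos(z(2x-1))\rangle=0$ for every $z\in\mathbb{C}$ is exactly the Fourier characterization, recalled in the preceding remark, of a distribution odd about $x=1/2$.

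Second, I would rewrite the complex polynomial $\widetilde{A}_\ell(z)$ in real form. The recursion $\widetilde{A}_{\ell+1}(z)=(2\ell+1)\widetilde{A}_\ell(z)-z^2\widetilde{A}_{\ell-1}(z)$, with $\widetilde{A}_0=1$ and $\widetilde{A}_1=1-iz$, follows by combining the standard three-term recursions for $P_\ell$ and $Q_\ell$ after multiplication by appropriate powers of $z$ and a straightforward reindexing. Setting $A_\ell(t):=\widetilde{A}_\ell(t/(2i))$, the substitution turns $-z^2$ into $+t^2/4$ and $-iz$ into $-t/2$, yielding the recursion and initial values of the statement; moreover $\widetilde{D}^k=(2i)^{-k}D^k$ gives the operator identity $A_\ell(D)=\widetilde{A}_\ell(\widetilde{D})$, with real coefficients guaranteed by the parities of $\widetilde{P}_\ell$ (even) and $\widetilde{Q}_{\ell-1}$ (odd).

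The cases $\ell=0$ and $\ell=1$ require a brief separate check, since Lemma~\ref{symetrie} was formulated for $\ell\ge 1$ and the conventions $T_0=\mathrm{Id}$, $A_0(D)=\mathrm{Id}$ must be verified. For $\ell=0$, identity \eqref{lecasl=0} together with Fourier series theory already gives the oddness of $\zeta$ directly. For $\ell=1$, Lemma~\ref{symetrie} applies and the explicit calculation recalled after it, namely that $2G-g$ is even with $G'=g$, yields after differentiation that $2g-g'=2\,A_1(D)[g]$ is odd about $x=1/2$. The only delicate point in this scheme is the bookkeeping of the $i$-factors through the rescaling $z=t/(2i)$ and the integration by parts, confirming that the parities of $\widetilde{P}_\ell$ and $\widetilde{Q}_{\ell-1}$ force all imaginary units to cancel so as to leave a real-coefficient differential operator acting on $T_\ell[\zeta]$.
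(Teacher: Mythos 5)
Your proposal is correct and follows essentially the same route as the paper: use Lemma~\ref{symetrie} together with the parities of $\widetilde{P}_\ell$ (even) and $\widetilde{Q}_{\ell-1}$ (odd) and the Fourier characterization of odd distributions to get oddness of $\widetilde{A}_\ell(\widetilde{D})\,T_\ell[\zeta]$, then rescale $z=t/(2i)$ to replace $\widetilde{A}_\ell(\widetilde{D})$ by the real-coefficient operator $A_\ell(D)$. The paper presents this as an assembly of the preceding material without supplying the integration-by-parts step or the derivation of the $\widetilde{A}_\ell$ recursion from the $P_\ell,Q_\ell$ recursions, whereas you make both explicit and check the low-order cases $\ell=0,1$; this is a faithful, slightly more detailed version of the paper's argument rather than a different one.
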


\vspace{0.1cm}\noindent
\begin{rem}\label{rkcalculA_l}
The next polynomials in the sequence are given by
\[
A_2(t) \;=\; \tfrac{1}{4}t^2 - \tfrac{3}{2}t + 3\,,
\qquad
A_3(t) \;=\; -\tfrac{1}{8}t^3 + \tfrac{3}{2}t^2 - \tfrac{15}{2}t + 15\,.
\]
\end{rem}

\begin{rem}\label{rem:reverse-bessel}
The polynomials $A_\ell$ are well known. More precisely, if $\theta_\ell$ denotes the
\emph{reverse Bessel polynomial} (also called the \emph{reverse Bessel polynomial of degree $\ell$})\,,
then
\[
A_\ell(t)=\theta_\ell\!\left(-\frac{t}{2}\right),\qquad \ell\in\mathbb{N},
\]
see, e.g., \cite[ 18.34]{DLMF}.
In particular, $A_\ell$ admits the explicit expansion
\begin{equation}\label{eq:Aell-explicit}
A_\ell(t)
=\sum_{k=0}^{\ell}\frac{(\ell+k)!}{(\ell-k)!\,k!}\,\frac{(-1)^{\ell-k}}{2^{\ell}}\,t^{\ell-k}\,.
\end{equation}
Moreover, $A_\ell$ satisfies the following second order differential equation
\begin{equation}\label{eq:Aell-ode}
t\,A_\ell''(t)+\bigl(t-2\ell\bigr)A_\ell'(t)-\ell\,A_\ell(t)=0\,.
\end{equation}

\end{rem}

\section{Uniqueness result in the case \texorpdfstring{$(\ell_1,\ell_2)=(0,1)$}{(l1,l2)=(0,1)}}

In this section, we assume that \( \zeta \) satisfies the hypotheses of Proposition~\ref{lelemmedebase} for \( \ell = 0 \) and \( \ell = 1 \) and we aim to show that $\zeta$ vanishes almost everywhere on $(0,1)$\,. For \( \ell = 0 \)\,, this assumption simply implies that \( \zeta \) is odd with respect to the midpoint \( x = \tfrac{1}{2} \)\,. For \( \ell = 1 \)\,, it means by Theorem~\ref{mainthm} that the function \( A_1(D)\bigl[T_1[\zeta]\bigr] \) is odd in the sense of distributions.

\medskip
\noindent
The aim is to prove that \(y = \zeta'\) is a solution of a linear second-order differential equation on \((0,1)\) and satisfies the conditions
\[
y\!\left(\tfrac12\right) = y'\!\left(\tfrac12\right) = 0\,.
\]
It then follows that \(y\) vanishes identically on \((0,1)\)\,, and consequently \(\zeta\) vanishes almost everywhere on \((0,1)\)\,, since it is an odd function.

\medskip
\noindent
A straightforward computation yields
\begin{equation}\label{eq2A1(D)(T1)}
2 A_1(D)\bigl[T_1[\zeta]\bigr] (x)= 2 A_1(D)\bigl[S_1 [\zeta ]\bigr](x)
=  - \zeta'(x) +  \left( 2 - \frac{4}{x} \right) \zeta(x)  - 4(2x - 1) \int_x^1 \frac{\zeta(t)}{t^2} \, dt \,.
\end{equation}
We further compute, for all $x \in (0,1)$\,,
\[ G (x) := D^2 A_1(D)\bigl[T_1[\zeta]\bigr](x) = A_1(D)\bigl[ D^2 S_1[\zeta]\bigr](x) =  A_1(D)\left[ \zeta'' + \frac{4}{x} \zeta' \right](x)\]
where we used Lemma~\ref{rundel}~(iv) with \(l=1\). Setting $y := \zeta'$\,, we obtain for all \(x \in (0,1)\)\,,
\begin{equation}\label{expressionGl=1}
G(x) =  - \frac{1}{2}y''(x) + \left( 1 - \frac{2}{x} \right) y'(x) + \left( \frac{4}{x} + \frac{2}{x^2} \right)y(x) \, .
\end{equation}
Recalling that \(G\) is odd on $(0,1)$\,, it satisfies
\begin{equation}\label{equationGl=1}
G(x) + G(1-x) = 0
\end{equation}
in the sense of distributions. Since \(y\) is even, the identities~\eqref{expressionGl=1} and~\eqref{equationGl=1} imply that \(y\) satisfies a linear
second-order differential equation on \( (0,1) \). Moreover, since $\zeta$ is odd, $y' = \zeta''$ is also odd and $y' \left( \frac{1}{2} \right) = 0\,$. Finally,
evaluating~\eqref{eq2A1(D)(T1)} at $x = \frac{1}{2}$, we obtain
\[ y \!\left(\tfrac12\right) = \zeta' \!\left(\tfrac12\right) = - 2 A_1(D)\bigl[T_1[\zeta]\bigr] \!\left(\tfrac12\right) -6 \zeta \!\left(\tfrac12\right)  = 0\]
since both $\zeta$ and $A_1(D)\bigl[T_1[\zeta]\bigr]$ are odd functions on $(0,1)$.

\medskip
\noindent
The function \(y\) thus satisfies a linear
second-order differential equation on \( (0,1) \)\,, together with the conditions
$ y\!\left(\tfrac{1}{2}\right) = 0$ and $y'\!\left(\tfrac{1}{2}\right) = 0$\,.
By the Cauchy--Lipschitz theorem, we conclude that \( y \equiv 0 \)\,.
Therefore \( \zeta' = y = 0 \), so \( \zeta \) is constant.
Since \( \zeta \) is odd with respect to \( x = \tfrac{1}{2} \)\,,
this constant must vanish, and $\zeta \equiv 0$\,. This completes the proof of the following completeness result.

\begin{thm}
	Assume that \( \zeta \in L^2(0,1) \) satisfies the hypotheses of Proposition~\ref{lelemmedebase} for \( \ell = 0 \) and \( \ell = 1 \)\,. Then \( \zeta = 0 \) almost everywhere on \( (0,1) \)\,.
\end{thm}

\noindent
In particular, this implies that the differential spectral map associated with the pair \( (\ell_1,\ell_2) = (0,1) \) is injective.

\section{Uniqueness result in the case \texorpdfstring{$(\ell_1,\ell_2)=(0,2)$}{(l1,l2)=(0,2)}}

Throughout this subsection, we assume that \( \zeta \) satisfies the hypotheses of Proposition~\ref{lelemmedebase} for both \( \ell = 0 \) and \( \ell = 2 \) and we aim to show that $\zeta$ vanishes almost everywhere on $(0,1)$\,. 
As before, the condition for \( \ell = 0 \) implies that \( \zeta \) is odd with respect to the midpoint \( x = \tfrac{1}{2} \) and for \( \ell = 2 \)\,, it means by Theorem~\ref{mainthm} that the function \( A_2(D)\bigl[T_2[\zeta]\bigr] \) is odd in the sense of distributions.

\medskip
\noindent
In order to prove that $\zeta$ vanishes almost everywhere on $(0,1)$, we follow the same strategy as in the previous Subsection. The aim is to prove that \(y = \zeta'\) is a solution of a linear fourth-order differential equation on \((0,1)\) and satisfies the conditions
\[
y\!\left(\tfrac12\right) = y'\!\left(\tfrac12\right) = y''\!\left(\tfrac12\right) = y^{(3)}\!\left(\tfrac12\right) =0\,.
\]
It then follows that \(y\) vanishes identically on \((0,1)\)\,, and consequently \(\zeta\) vanishes almost everywhere on \((0,1)\)\,, since it is an odd function.

\medskip
\noindent
Let us set \( f := S_1[\zeta] \) and \(g = -S_2[f]\), so that \( g = T_2[\zeta] \)\,. We now differentiate the expression \( A_2(D)\bigl[g \bigr] \) four times. Since the application of an even number of derivatives preserves parity in the distributional sense, and since the operator \( A_2(D) \) commutes with differentiation, it follows that
\[
A_2(D)\bigl[ f^{(4)}(x) + \tfrac{8}{x} f^{(3)}(x) \bigr]
\]
is odd, where we have used Lemma~\ref{rundel}~(iv) with \( \ell = 2 \)\,. Using again Lemma~\ref{rundel}~(iv), now with \( \ell = 1 \)\,, and substituting into the above expression, we obtain that the function 
$A_2(D)\bigl[F\bigr]$ is odd in the sense of distributions, where for all $x\in (0,1)$\,,
\[
F(x) := \zeta^{(4)}(x) + \frac{12}{x}\,\zeta^{(3)}(x) + \frac{24}{x^2} \zeta''(x)
- \frac{24}{x^3} \zeta'(x)\,.
\]
As in the cases \( \ell = 0 \) and \( \ell = 1 \), we now define \( y(x) := \zeta'(x) \)\,. Since \( \zeta \) is odd with respect to the midpoint \( x = \tfrac{1}{2} \), it follows that \( y \) is an even function. Applying the differential operator \(4  A_2(D) = D^2 - 6D + 12 \) to the function $F$, we define
\[
G(x) := 4  A_2(D)[F(x)] = \left(\frac{d^2}{dx^2} - 6\frac{d}{dx} + 12\right) 
\left( y^{(3)} + \frac{12}{x}\,y'' + \frac{24}{x^2} y' - \frac{24}{x^3} y \right)
\]
and we find:
\begin{align}
	G(x) = &\left( -\frac{288}{x^5} - \frac{432}{x^4} - \frac{288}{x^3} \right) y(x)
	+ \left( \frac{288}{x^4} + \frac{432}{x^3} + \frac{288}{x^2} \right) y'(x)   \nonumber\\
	&+ \left( -\frac{96}{x^3} - \frac{72}{x^2} + \frac{144}{x} \right) y''(x)
	+ \left( 12 - \frac{72}{x} \right) y^{(3)}(x)   \nonumber\\
	&+ \left( -6 + \frac{12}{x} \right) y^{(4)}(x)
	+ y^{(5)}(x)\,.\nonumber
\end{align}
\noindent
By the symmetry result, we have that $G(x) + G(1 - x) = 0$  in the sense of distributions. This implies that the function \( y \) satisfies a linear differential equation of order four, since the fifth derivative \( y^{(5)} \) is odd with respect to the midpoint \( x = \tfrac{1}{2} \)\,. 

\medskip
\noindent
We now prove that $ y\!\left(\tfrac{1}{2}\right) = y'\!\left(\tfrac{1}{2}\right) =  y''\!\left(\tfrac{1}{2}\right) =   y^{(3)} \!\left(\tfrac{1}{2}\right) = 0 $\,. First, since $y$ is even, $y'$ and $y^{(3)}$ are both odd and $y'\!\left(\tfrac{1}{2}\right) =   y^{(3)} \!\left(\tfrac{1}{2}\right) = 0$\,. Recall that (see (3.9) in \cite{RuSa01}), for all $x \in (0,1)$\,,
\[ g (x) = T_2[\zeta](x) = - \zeta(x) - 12x \int_x^1 \frac{\zeta(t)}{t^2} \, dt + 24 x^3 \int_x^1 \frac{\zeta(t)}{t^4} \, dt \,.\]
Applying the differential operator $4A_2(D)=D^{2}-6D+12$ to $g$, we obtain for all $x \in (0,1)$\,,
\begin{align}
	4A_2(D)[g](x) =  &- \zeta''(x) + \left( 6 - \frac{12}{x} \right) \zeta'(x) + \left( - \frac{48}{x^2} + \frac{72}{x} - 12 \right) \zeta(x)   \nonumber\\
	&+ 72(1 - 2x) \int_x^1 \frac{\zeta(t)}{t^2} \, dt + 144 x(x-1)(2x - 1) \int_x^1 \frac{\zeta(t)}{t^4} \, dt \,.\nonumber
\end{align}
Evaluating this expression at $x=\tfrac12$ and using that $\zeta$ is odd and $y = \zeta'$, we obtain
\[ 4A_2(D)\bigl[g\bigr]\!\left(\tfrac12\right)
=
- \zeta''\!\left(\tfrac12\right) -18 \zeta' \!\left(\tfrac12\right) -60 \zeta \!\left(\tfrac12\right)
= -18\,y\!\left(\tfrac12\right)\,. \]
Since $4A_2(D)\bigl[g\bigr](x)$ is odd, we therefore conclude that $y\!\left(\tfrac12\right)=0$\,.

\medskip
\noindent
Proceeding in the same way, we compute
\begin{align}
	4D^2 A_2(D)[g](x) =  &- \zeta^{(4)}(x) + \left( 6 - \frac{12}{x} \right) \zeta^{(3)}(x) + \left( - \frac{24}{x^2} + \frac{72}{x} -12 \right) \zeta''(x) \nonumber \\
	& + \left( \frac{24}{x^3} + \frac{216}{x^2} - \frac{144}{x} \right) \zeta'(x) + \left( \frac{288}{x^3} - \frac{576}{x^2}  \right) \zeta(x)   \nonumber\\
	&+ 864 (2x - 1 ) \int_x^1 \frac{\zeta(t)}{t^4} \, dt \,.\nonumber
\end{align}
Recalling that $\zeta$ is odd, $y = \zeta'$ and $y\!\left(\tfrac12\right)=0$\,, we evaluate at $x=\tfrac12$ and obtain 
\[
 4\,D^{2}A_{2}(D)\bigl[g\bigr]\!\left(\tfrac12\right)
=
-\,y^{(3)}\!\left(\tfrac12\right)
-18\,y''\!\left(\tfrac12\right)
+36\,y'\!\left(\tfrac12\right)
+768\,y\!\left(\tfrac12\right)
= -18\,y''\!\left(\tfrac12\right)\,.
\]
Since $4\,D^{2}A_{2}(D)\bigl[g\bigr]$ is also an odd function, we conclude, as above, that $ y''\!\left(\tfrac12\right)=0$\,.

\medskip\noindent
In conclusion, $y$ satisfies a fourth--order linear differential equation with
the initial conditions
\[ 
y\!\left(\tfrac12\right)=0,\qquad
y'\!\left(\tfrac12\right)=0,\qquad
y''\!\left(\tfrac12\right)=0,\qquad
y^{(3)}\!\left(\tfrac12\right)=0\,.
\]
The Cauchy--Lipschitz theorem then implies that $y\equiv 0$.
Since $y=\zeta'$ and $\zeta$ is odd, this in turn forces $\zeta \equiv 0$\,. We thus obtain a completeness result in the case \( (\ell_1,\ell_2)=(0,2) \), in full analogy with the case \( (\ell_1,\ell_2)=(0,1) \):

\begin{thm}
	Assume that \( \zeta \in L^2(0,1) \) satisfies the hypotheses of Proposition~\ref{lelemmedebase} for \( \ell = 0 \) and \( \ell = 2 \). Then \( \zeta = 0 \) almost everywhere on \( (0,1) \).
\end{thm}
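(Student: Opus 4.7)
The plan is to combine the two parity constraints provided by the hypotheses at $\ell=0$ and $\ell=2$ and extract from them a singular linear ODE that forces $\zeta$ to vanish. From the $\ell=0$ hypothesis (using the convention $T_0=\mathrm{Id}$ and identity~\eqref{lecasl=0}), $\zeta$ is odd with respect to the midpoint $x=\tfrac12$, so $y:=\zeta'$ is even. From the $\ell=2$ hypothesis, Theorem~\ref{mainthm} yields that $A_2(D)[T_2[\zeta]]$ is odd in the sense of distributions, with $A_2(t)=\tfrac{t^2}{4}-\tfrac{3t}{2}+3$.

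Next, I would transport this symmetry condition onto $\zeta$ itself. Setting $f:=S_1[\zeta]$ and $g:=T_2[\zeta]=-S_2[f]$, two successive applications of Lemma~\ref{rundel}(vi) express high-order derivatives of $g$ as derivatives of $\zeta$ plus explicit rational functions of $1/x$. Because differentiating an odd distribution an even number of times preserves oddness and $A_2(D)$ commutes with $\partial_x$, the condition becomes an odd linear combination of $\zeta^{(k)}$ with rational coefficients. Substituting $y=\zeta'$ and using the evenness of $y$, the fifth-derivative term $y^{(5)}$ cancels upon antisymmetrization $x\leftrightarrow 1-x$, producing the fourth-order singular ODE of Lemma~\ref{edo02} for $y$ on $(0,1)$.

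I would then analyze this ODE at the singular endpoints $0$ and $1$: the indicial equation has roots $\{-2,1,3,4\}$, and the $L^2(0,1)$ requirement eliminates the exponent $-2$ at each endpoint. The heart of the argument is to show that the space of \textbf{even} $L^2$-solutions is trivial. I would test the identity $G=0$ against $y$ and integrate by parts, using $\int_0^1 y^{(5)}y=0$ by parity, to obtain an integral identity involving $\int_0^1(-6+12/x)(y'')^2$, $\int_0^1(\cdots)(y')^2$, and $\int_0^1(\cdots)y^2$. Applying the weighted Hardy inequalities $\int_0^1 y^2/x^5\le\tfrac14\int_0^1(y')^2/x^3$ and $\int_0^1 y^2/x^4\le\tfrac49\int_0^1(y')^2/x^2$, then exploiting the symmetry of $y'$ to trade the negative weight near $x=1$ for a positive one near $x=0$, should force the entire quadratic form to be a sum of nonnegative terms, whence $y\equiv 0$.

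It remains to rule out nontrivial odd solutions of the original distributional condition. Since $y=\zeta'\equiv 0$ combined with the oddness of $\zeta$ about $\tfrac12$ gives $\zeta\equiv 0$ for the even part, the only loose end is a residual one-parameter family of odd solutions surviving in the kernel of the reduced ODE (computed explicitly via computer algebra). The necessary condition obtained by differentiating is weaker than the original symmetry, so one must plug the explicit candidate back into the statement ``$A_2(D)[T_2[\zeta]]$ is odd''. Since this expression is smooth, oddness about $\tfrac12$ forces its value at $\tfrac12$ to vanish; a direct evaluation (integral terms vanish, $\zeta(\tfrac12)=0$ by oddness, and only $\zeta'(\tfrac12)$ survives with a nonzero coefficient) gives a linear equation on the free parameter $A$ which is inconsistent unless $A=0$. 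The main obstacle I anticipate is the Hardy-based energy estimate: one must check carefully that after using the symmetry of $y'$ the pointwise coefficient $f(x)+144$ remains nonnegative on $[\tfrac12,1]$, and that the coefficient $-6+12/x$ of $(y'')^2$ is handled by a similar midpoint-symmetrization so as to keep the quadratic form manifestly nonnegative on the whole interval.
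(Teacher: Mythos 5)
Your proposal reproduces the paper's own argument step by step: the parity reductions via Theorem~\ref{mainthm} and two applications of Lemma~\ref{rundel}(vi), the fourth-order singular ODE of Lemma~\ref{edo02} for $y=\zeta'$, the energy estimate built from $\int_0^1 G\,y\,dx=0$ with weighted Hardy inequalities and midpoint-symmetrization to control the $y'$ term, and the final check that the residual explicit candidate violates oddness of $A_2(D)[T_2(\zeta)]$ at $x=\tfrac12$. One small imprecision: the energy estimate shows the space of even $L^2$ solutions is at most one-dimensional rather than trivial (and the $-6+12/x$ coefficient of $(y'')^2$ needs no symmetrization since it is already positive on $(0,1]$), but you correctly account for the surviving one-parameter family in your closing paragraph, so the logic is intact.
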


\noindent
In particular, this implies that the differential of the spectral map associated with the pair \( (\ell_1,\ell_2)=(0,2) \) is injective.



\section{Uniqueness result in the case \texorpdfstring{$(\ell_1,\ell_2)=(1,2)$}{(l1,l2)=(1,2)}}

In this section, we assume that $\zeta$ satisfies the hypotheses of Proposition~\ref{lelemmedebase} for both $\ell = 1$ and $\ell = 2\,$. 
Our goal is to prove that $\zeta$ vanishes identically on $(0,1)\,$.

\vspace{0.2cm}\noindent
We first consider the case $\ell=1\,$. 
By Theorem~\ref{mainthm}, the distribution
$
A_1(D)\bigl[S_1[\zeta]\bigr]
$
is odd, where $A_1(t)=1-\frac{t}{2}$ and $D=\frac{d}{dx}\,$.
Thus, if we denote $f := S_1[\zeta]\,$, this condition is equivalent to requiring that
$
f' - 2f
$
be odd (in the sense of distributions). 
Decomposing $f$ into its even and odd parts,
\begin{equation}\label{decomparity}
f = f_e + f_o\,,
\end{equation}
where $f_e$ is even and $f_o$ is odd with respect to $x=\tfrac12$\,, we immediately get 
\begin{equation}\label{linkparity}
f_e = \frac{1}{2} f_o'\,,
\end{equation} 
and therefore
\begin{equation}\label{formf}
f =  \frac{1}{2} f_o'+ f_o \,.
\end{equation}

\medskip
\noindent
We now exploit the case $\ell=2\,$. 
Since $\zeta$ satisfies the assumptions of Proposition~\ref{lelemmedebase} with $\ell=2$, 
Theorem~\ref{mainthm} yields that
$
A_2(D)\bigl[T_2[\zeta]\bigr]
$
is odd in the sense of distributions, where 
\[
A_2(t)=\tfrac14 t^2-\tfrac32 t+3\,.
\]
We define
\[
g := S_2[f]
= f(x)-8x^{3}\int_x^1 \frac{f(t)}{t^{4}}\,dt\,,
\]
so that $g=-T_2[\zeta]\,$. A straightforward computation yields
\begin{equation}\label{parity1}
4A_2(D)[g](x)
=
f''(x)
+\Bigl(\frac{8}{x}-6\Bigr)f'(x)
+\Bigl(\frac{16}{x^2}-\frac{48}{x}+12\Bigr)f(x)
-48x(1-x)(1-2x)\int_x^1 \frac{f(t)}{t^4}\,dt\,.
\end{equation}
Since $4A_2(D)[g]$ is odd with respect to $x=\tfrac12$\,, evaluating \eqref{parity1} at $x=\tfrac12$
yields
\[
f''\!\left(\tfrac12\right)
+10\,f'\!\left(\tfrac12\right)
-20\,f\!\left(\tfrac12\right)
=0\,.
\]
Using the decomposition $f=\tfrac12 f_o'+f_o$, where $f_o$ is odd, we obtain
\begin{equation}\label{eq:fo-third-derivative}
	f_o^{(3)}\!\left(\tfrac12\right)=0\,.
\end{equation}
Similarly, a straightforward computation gives
\begin{equation}\label{eq:D2A2g}
	\begin{aligned}
		D^{2}\bigl(4A_2(D)[g]\bigr)(x)
		&=
		f^{(4)}(x)
		+\Bigl(\frac{8}{x}-6\Bigr)f^{(3)}(x)
		+\Bigl(12-\frac{48}{x}\Bigr)f''(x)
		\\[0.3em]
		&\quad
		+\Bigl(\frac{96}{x}-\frac{48}{x^{2}}\Bigr)f'(x)
		+\Bigl(\frac{192}{x^{2}}-\frac{96}{x^{3}}\Bigr)f(x)
		\\[0.3em]
		&\quad
		+288(1-2x)\int_x^1 \frac{f(t)}{t^{4}}\,dt\,.
	\end{aligned}
\end{equation}
Since $D^{2}\bigl(4A_2(D)[g]\bigr)$ is odd with respect to $x=\tfrac12$, evaluating at $x=\tfrac12$
gives
\begin{equation}\label{eq:fo-fifth-derivative}
	f_o^{(5)}\!\left(\tfrac12\right)
	=
	64\,f_o^{(3)}\!\left(\tfrac12\right)=0\,.
\end{equation}
Finally, a similar computation yields
\begin{equation}\label{eq:G-definition}
	\begin{aligned}
		G(x):= D^{4}\bigl(4A_2(D)[g]\bigr)(x)
		&=
		f^{(6)}(x)
		+\Bigl(\frac{8}{x}-6\Bigr)f^{(5)}(x)
		+\Bigl(12-\frac{48}{x}-\frac{16}{x^{2}}\Bigr)f^{(4)}(x)
		\\[0.3em]
		&\quad
		+\Bigl(\frac{96}{x}+\frac{48}{x^{2}}+\frac{16}{x^{3}}\Bigr)f^{(3)}(x)\,.
	\end{aligned}
\end{equation}
Replacing, $f =  \frac{1}{2} f_o'+ f_o$, we get
\begin{equation}\label{eq:G-fo-expanded}
	\begin{aligned}
		G(x)
		&=
		\frac12\,f_o^{(7)}(x)
		+\Bigl(\frac{4}{x}-2\Bigr)f_o^{(6)}(x)
		-\Bigl(\frac{16}{x}+\frac{8}{x^{2}}\Bigr)f_o^{(5)}(x)
		\\[0.3em]
		&\quad
		+\Bigl(12+\frac{8}{x^{2}}+\frac{8}{x^{3}}\Bigr)f_o^{(4)}(x)
		+\Bigl(\frac{96}{x}+\frac{48}{x^{2}}+\frac{16}{x^{3}}\Bigr)f_o^{(3)}(x)\,.
	\end{aligned}
\end{equation}

\vspace{0.2cm}\noindent
We now use the symmetry condition $G(x)+G(1-x)=0$. 
This yields the following differential equation.
\begin{equation}\label{eq:symmetry-fo}
	\begin{aligned}
		0={}&
		f_o^{(7)}
		+4\left(\frac1x-\frac1{1-x}\right) f_o^{(6)}
		\\
		&-\left[16\left(\frac1x+\frac1{1-x}\right)
		+8\left(\frac1{x^2}+\frac1{(1-x)^2}\right)\right] f_o^{(5)}
		\\
		&+8\left[\left(\frac1{x^2}-\frac1{(1-x)^2}\right)
		+\left(\frac1{x^3}-\frac1{(1-x)^3}\right)\right] f_o^{(4)}
		\\
		&+\left[96\left(\frac1x+\frac1{1-x}\right)
		+48\left(\frac1{x^2}+\frac1{(1-x)^2}\right)
		+16\left(\frac1{x^3}+\frac1{(1-x)^3}\right)\right] f_o^{(3)}\,.
	\end{aligned}
\end{equation}
Finally, setting
\[
y(x)=f_o^{(3)}(x)\,,
\]
we obtain a fourth--order differential equation for $y$\,. 
The function $y$ is even with respect to $x=\tfrac12$, and from the previous identities we have
\[
y^{(k)}\!\left(\tfrac12\right)=0\,, \qquad k=0,1,2,3\,.
\]
By the Cauchy--Lipschitz theorem, it follows that
$y\equiv 0$. Hence $f_o$ is an odd polynomial of degree at most two, 
and therefore necessarily of the form
\begin{equation}\label{eq:fo-linear}
	f_o(x)=a\,(x-\tfrac12)\,.
\end{equation}
Using again $f=\tfrac12 f_o'+f_o\,$, we obtain
\begin{equation}\label{eq:f-linear}
	f(x)=\frac{a}{2}+a\,(x-\tfrac12)=a\,x\,.
\end{equation}
Since \( f = S_1[\zeta] \), applying the left inverse \( A_1 \) of \( S_1 \) given in Lemma~\ref{rundel}(iii) yields
\begin{equation}\label{eq:zeta-zero}
	\zeta(x)
	=
	A_1[f](x)
	=
	f(x)-\frac{4}{x^{3}}\int_{0}^{x} t^{2}\,f(t)\,dt
	=
	0\,.
\end{equation}

\medskip\noindent
We have thus proved the following theorem.

\begin{thm}
	Assume that $\zeta \in L^2(0,1)$ satisfies the hypotheses of 
	Proposition~\ref{lelemmedebase} for $\ell=1$ and $\ell=2$.
	Then $\zeta=0$ almost everywhere on $(0,1)$\,.
\end{thm}

\noindent
In particular, the differential of the spectral map associated with the pair 
$(\ell_1,\ell_2)=(1,2)$ is injective.


\section{Uniqueness result in the case \texorpdfstring{$(\ell_1,\ell_2)=(0,3)$}{(l1,l2)=(0,3)}}

Throughout this section, we assume that \( \zeta \) satisfies the hypotheses of Proposition~\ref{lelemmedebase} for both \( \ell = 0 \) and \( \ell = 3 \)\,, and we aim to show that $\zeta$ vanishes almost everywhere on $(0,1)$\,. 
As before, the condition for \( \ell = 0 \) implies that \( \zeta \) is odd with respect to the midpoint \( x = \tfrac{1}{2} \)\,. For \( \ell = 3 \)\,, Theorem~\ref{mainthm} implies that the function \( A_3(D)\bigl[T_3[\zeta]\bigr] \) is odd in the sense of distributions.

\medskip
\noindent
In order to prove that $\zeta$ vanishes almost everywhere on $(0,1)$, we follow the same strategy as in the previous section. The only step where we rely on computer assistance is Step~3, where we provide a computer-assisted proof.

\begin{itemize}
\item \textbf{Step~1.}
We show that
\[
\Bigl( \forall\, k \in \{0,1,2\},\ 
D^{2k} A_3(D)\bigl[T_3[\zeta]\bigr]\!\left(\tfrac12\right) = 0 \Bigr)
\quad \Longrightarrow \quad (S)\,,
\]
where $(S)$ denotes a triangular linear system involving the even-order derivatives of
\( y = \zeta' \) at \( x = \tfrac12 \)\,, uniquely determined by the single value \( y(\tfrac12) \)\,.

\item \textbf{Step~2.}
We show that \( y \) satisfies a linear differential equation of order~\(8\) on the interval \( (0,1) \).
The associated indicial equation at \( x = 0 \) has the characteristic roots
\[
\rho = -2,\; 1,\; 3,\; 5,\; 6,\; 7,\;
-2 \pm 2 i \sqrt{11}\,.
\]

\item \textbf{Step~3.}
A numerical investigation shows that the unique solution of this differential equation
satisfying \( y(\tfrac12) = 1 \) and whose initial data at \( x = \tfrac12 \) are prescribed by the system $(S)$
exhibits a pronounced blow-up as \( x \to 0^+ \) and \( x \to 1^- \).
The numerical solution \( y(x) \) displays oscillatory growth compatible with the Frobenius exponents
\(\alpha = -2 \pm 2 i \sqrt{11}\)\,.
As a consequence, the corresponding function \( \zeta \) fails to be square-integrable on \( (0,1) \)\,.

\item \textbf{Step~4.}
We infer that all derivatives of \( y \) at \( x = \tfrac12 \) must vanish.
It then follows that \( y \equiv 0 \) on \( (0,1) \), and consequently
\( \zeta = 0 \) almost everywhere on \( (0,1) \), since \( \zeta \) is odd with respect to
the midpoint \( x = \tfrac12 \)\,.
\end{itemize}


\medskip
\noindent
\textbf{Step 1.a: for all odd $k \in \{0,\dots 7\}$, i.e., $k \in \{1,3,5,7\}$\, , $y^{(k)}\!\left(\tfrac12\right) = 0$\,.} This is an immediate consequence of the fact that $y^{(k)}$ is odd whenever $k$ is odd.

\medskip
\noindent
\textbf{Step~1.b.}
We derive a triangular linear system \((S)\) for the even-order jets of \(y\) at \(x=\tfrac12\)\,.
We follow the strategy used in cases $(0,1)$ and $(0,2)$ but as we will see, due to the increasing degree of the polynomial $A_{\ell}$\,, we need to work a little more.

\medskip
\noindent
\textbf{Step 1.b.i: using $A_3(D)\bigl[T_3[\zeta]\bigr] \!\left(\tfrac12\right) = 0$}. Recall that (see (3.9) in \cite{RuSa01}), for all $x \in (0,1)$\,,
\[ h (x) = T_3[\zeta](x) = \zeta(x) - 24x \int_x^1 \frac{\zeta(t)}{t^2} \, dt + 120 x^3 \int_x^1 \frac{\zeta(t)}{t^4} \, dt -120 x^5 \int_x^1 \frac{\zeta(t)}{t^6} \, dt \,.\]
Applying the differential operator $8A_3(D)=-D^3 + 12 D^2 - 60D + 120$ to $h$, we obtain for all $x \in (0,1)$\,,
\begin{align}
	8A_3(D)[h](x) =  &- \zeta'''(x) + \left( 12 - \frac{24}{x} \right) \zeta''(x)  + \left( -60 + \frac{288}{x} - \frac{216}{x^2} \right) \zeta'(x)   \nonumber\\
	&  + \left( 120 - \frac{1440}{x} + \frac{2880}{x^2}  - \frac{1200}{x^3} \right) \zeta(x) + 1440(-2x + 1 )  \int_x^1 \frac{\zeta(t)}{t^2} \, dt
	   \nonumber\\
	&+ 720( 20x^3 - 30 x^2 + 12 x - 1)   \int_x^1 \frac{\zeta(t)}{t^4} \, dt  \nonumber\\
	&+ 7200( -2x^5 +5x^4 - 4 x^3 + x^2) \int_x^1 \frac{\zeta(t)}{t^6} \, dt \,.\nonumber
\end{align}
Evaluating this expression at $x=\tfrac12$ and using that $\zeta$ is odd and $y = \zeta'$, we obtain
\[ 8A_3(D)\bigl[h\bigr]\!\left(\tfrac12\right)
= - \zeta'''\!\left(\tfrac12\right)
- 36 \zeta''\!\left(\tfrac12\right) - 348 \zeta' \!\left(\tfrac12\right) -840 \zeta \!\left(\tfrac12\right)
= - y''\!\left(\tfrac12\right) - 348 \,y\!\left(\tfrac12\right)\,. \]
Since $8A_3(D)[h]$ is odd, we therefore conclude that 
\[  - y''\!\left(\tfrac12\right) - 348 \,y\!\left(\tfrac12\right)=0 \,. \]

\medskip
\noindent
\textbf{Step 1.b.ii: using $D^2 A_3(D)\bigl[T_3[\zeta]\bigr] \!\left(\tfrac12\right) = 0$.} After some calculations we obtain :
\[ 8D^2A_3(D)\bigl[h\bigr]\!\left(\tfrac12\right)
= - \zeta^{(5)}\!\left(\tfrac12\right)
- 36 \zeta^{(4)}\!\left(\tfrac12\right) -156 \zeta'''\!\left(\tfrac12\right)
+3384 \zeta''\!\left(\tfrac12\right) +18432 \zeta ' \!\left(\tfrac12\right)
\,. \]
Since $8 D^2 A_3(D)[h]$ is odd, we therefore conclude that 
\[  - y^{(4)}\!\left(\tfrac12\right) -156 y''\!\left(\tfrac12\right) +18432 \,y\!\left(\tfrac12\right) = 0 \, . \]

\medskip
\noindent
\textbf{Step 1.b.iii: using $D^4 A_3(D)\bigl[T_3[\zeta]\bigr] \!\left(\tfrac12\right) = 0$.} After some calculations we obtain :
\[ 8D^4A_3(D)\bigl[h\bigr]\!\left(\tfrac12\right)
= - \zeta^{(7)}\!\left(\tfrac12\right)
- 36 \zeta^{(6)}\!\left(\tfrac12\right) +36 \zeta^{(5)}\!\left(\tfrac12\right)
+6072 \zeta^{(4)}\!\left(\tfrac12\right) -18432 \zeta'''\!\left(\tfrac12\right)
-294912 \zeta''\!\left(\tfrac12\right) +589824 \zeta ' \!\left(\tfrac12\right)
\,. \]
Since $8 D^4 A_3(D)[h]$ is odd, we therefore conclude that 
\[ - y^{(6)}\!\left(\tfrac12\right) +36 y^{(4)}\!\left(\tfrac12\right)
-18432 y''\!\left(\tfrac12\right)+589824 y \!\left(\tfrac12\right) = 0 \, . \]

\medskip
\noindent

\vspace{0.2cm}\noindent
Thus, in contrast with the case $(0,2)$, where for \( k \in \{0,1\} \) one has
\[
D^{2k} A_2(D)\bigl[T_2[\zeta]\bigr]\!\left(\tfrac12\right) = 0
\quad \Longrightarrow \quad
y^{(2k)}\!\left(\tfrac12\right) = 0\,,
\]
we obtain here
\[
\Bigl( \forall\, k \in \{0,1,2\}\,,\ 
D^{2k} A_3(D)\bigl[T_3[\zeta]\bigr]\!\left(\tfrac12\right) = 0 \Bigr)
\quad \Longrightarrow \quad (S)\,,
\]
where $(S)$ denotes the following triangular linear system:
\[
\left\{
\begin{alignedat}{9}
 &&&&&& & & & & & y''\!\left(\tfrac12\right) &{}+{}& 348 &\,y\!\left(\tfrac12\right) &= 0,\\
&&&&& & & & y^{(4)}\!\left(\tfrac12\right) &{}+{}& 156 &\,y''\!\left(\tfrac12\right) &{}-{}& 18432 &\,y\!\left(\tfrac12\right) &= 0,\\
&&&& & y^{(6)}\!\left(\tfrac12\right) &{}-{}& 36 &\,y^{(4)}\!\left(\tfrac12\right) &{}+{}& 18432 &\,y''\!\left(\tfrac12\right) &{}-{}& 589824 &\,y\!\left(\tfrac12\right) &= 0\,.
\end{alignedat}
\right.
\]
This system is uniquely determined by the single value \( y(\tfrac12) \)\,.

\vspace{0.2cm}\noindent

\medskip
\noindent
\textbf{Step 2 : $y$ is a solution of a linear $8^{\text{th}}$-order differential equation.} Let us set \( f := S_1[\zeta] \)\,, \(g := S_2[f]\)\,, and \(h := S_3[g]\), so that \( h = T_3[\zeta] = S_3 S_2 S_1 [\zeta] \)\,. Using Lemma~\ref{rundel}~(iv) with \( \ell = 3 \), \( \ell = 2\) and \( \ell = 1\)\,, we have
\[ D^6[h] = D^6[g] + \frac{12}{x} D^5[g] \quad ; \quad D^4[g] = D^4[f] + \frac{8}{x} D^3[f] \quad ; \quad D^2[f] = D^2[\zeta] + \frac{4}{x} D[\zeta] .\]
Then, we can write $D^6[h]$ as
\begin{equation}\label{exprD6hl=3}
 D^6[h] = D^6[\zeta] + \sum_{k=1}^5 a_k \!\left(\tfrac1x\right) D^k [\zeta]
 \end{equation}
where for all $k \in \{1, \dots ,5\}\,$, $a_k$ is a polynomial function. We now differentiate the expression \( A_3(D)\bigl[h \bigr] \) six times. Since applying an even number of derivatives preserves parity in the distributional sense, and since the operator \( A_3(D) \) commutes with differentiation, it follows that the function
\[G := 8 D^6 A_3(D)\bigl[h \bigr]= 8 A_3(D)\bigl[ D^6 [h] \bigr]\]
is odd. Using the expression of $A_3(D)$ given in Remark~\ref{rkcalculA_l} and the form of $D^6[h]$ given in~\eqref{exprD6hl=3}, we obtain that the odd function $G$ can be written as
\[ G (x) = (-D^3 + 12 D^2 - 60D + 120)\bigl[  D^6[\zeta]\bigr](x) + \sum_{k=1}^5 a_k \!\left(\tfrac1x\right) D^k [\zeta] (x) = - D_9[\zeta](x) + \sum_{k=1}^8 b_k \!\left(\tfrac1x\right) D^k [\zeta](x)\,,
 \]
where for all $k \in \{1,\dots,8\}$, $b_k$ is a polynomial function. Thus, since $G$ is odd on $(0,1)$, it satisfies $G(x) + G(1-x) = 0$ in the sense of distributions. It follows that $y = \zeta' = D[\zeta]$ satisfies the following  linear $8^{\text{th}}$-order differential equation on \((0,1)\):

\begin{lemma}\label{edo03bis}
	Assume that \( \zeta \in L^2(0,1) \) satisfies the hypotheses of Lemma~\ref{lelemmedebase} 
	for \( \ell = 0 \) and \( \ell = 3 \)\,, and set \( y(x) := \zeta'(x) \)\,. 
	Then \( y \) satisfies the following differential equation
	\[
	-\,y^{(8)}(x)
	+12\left( \frac{1}{1-x} - \frac{1}{x} \right) y^{(7)}(x)
	\]
	\[
	+\left[
	-60
	+144\left( \frac{1}{x} + \frac{1}{1-x} \right)
	-36\left( \frac{1}{x^2} + \frac{1}{(1-x)^2} \right)
	\right] y^{(6)}(x)
	\]
	\[
	+\left[
	-720\left( \frac{1}{x} - \frac{1}{1-x} \right)
	+576\left( \frac{1}{x^2} - \frac{1}{(1-x)^2} \right)
	+336\left( \frac{1}{x^3} - \frac{1}{(1-x)^3} \right)
	\right] y^{(5)}(x)
	\]
	\[
	+\left[
	-720\left( \frac{1}{x^4} + \frac{1}{(1-x)^4} \right)
	-2880\left( \frac{1}{x^3} + \frac{1}{(1-x)^3} \right)
	-3600\left( \frac{1}{x^2} + \frac{1}{(1-x)^2} \right)
	+1440\left( \frac{1}{x} + \frac{1}{1-x} \right)
	\right] y^{(4)}(x)
	\]
	\[
	+\left[
	-2880\left( \frac{1}{x^5} - \frac{1}{(1-x)^5} \right)
	+7200\left( \frac{1}{x^3} - \frac{1}{(1-x)^3} \right)
	+8640\left( \frac{1}{x^2} - \frac{1}{(1-x)^2} \right)
	\right] y^{(3)}(x)
	\]
	\[
	+\left[
	23040\left( \frac{1}{x^6} + \frac{1}{(1-x)^6} \right)
	+34560\left( \frac{1}{x^5} + \frac{1}{(1-x)^5} \right)
	+21600\left( \frac{1}{x^4} + \frac{1}{(1-x)^4} \right)
	+2880\left( \frac{1}{x^3} + \frac{1}{(1-x)^3} \right)
	\right] y''(x)
	\]
	\[
	-\left[
	60480\left( \frac{1}{x^7} - \frac{1}{(1-x)^7} \right)
	+103680\left( \frac{1}{x^6} - \frac{1}{(1-x)^6} \right)
	+86400\left( \frac{1}{x^5} - \frac{1}{(1-x)^5} \right)
	+34560\left( \frac{1}{x^4} - \frac{1}{(1-x)^4} \right)
	\right] y'(x)
	\]
	\[
	+\left[
	60480\left( \frac{1}{x^8} + \frac{1}{(1-x)^8} \right)
	+103680\left( \frac{1}{x^7} + \frac{1}{(1-x)^7} \right)
	+86400\left( \frac{1}{x^6} + \frac{1}{(1-x)^6} \right)
	+34560\left( \frac{1}{x^5} + \frac{1}{(1-x)^5} \right)
	\right] y(x)
	\;=\;0\,,
	\]
	in the sense of distributions on \( (0,1) \)\,.
\end{lemma}

\medskip\noindent
The indicial equation at \(x=0\) associated with the differential equation of Lemma~\ref{edo03bis} takes the form
	\[
-\;(\rho-7)(\rho-6)(\rho-5)(\rho-3)(\rho-1)(\rho+2)
\left(\rho^2+4\rho+48\right)=0\,.
\]
Consequently, the corresponding indicial roots are
\[
\rho \;=\;-2,\;1,\;3,\;5,\;6,\;7,\;
-2\pm 2 i \sqrt{11}\,.
\]


\vspace{0.2cm}
\noindent
\textbf{Step~3: Numerical study of the solution.}
We numerically integrate the eighth--order differential equation given in Lemma \ref{edo03bis}
with initial conditions prescribed at \(x=\tfrac12\)\,, namely \(y(\tfrac12)=1\) together with the
even derivatives \(y^{(2k)}(\tfrac12)\) computed in Step 1,
the odd derivatives being zero by symmetry. The resulting solution \(y\),
which is even with respect to \(x=\tfrac12\)\,, blows up at both endpoints \(x=0\) and \(x=1\)\,.

\begin{figure}[H]
	\centering
	\includegraphics[width=0.50\textwidth]{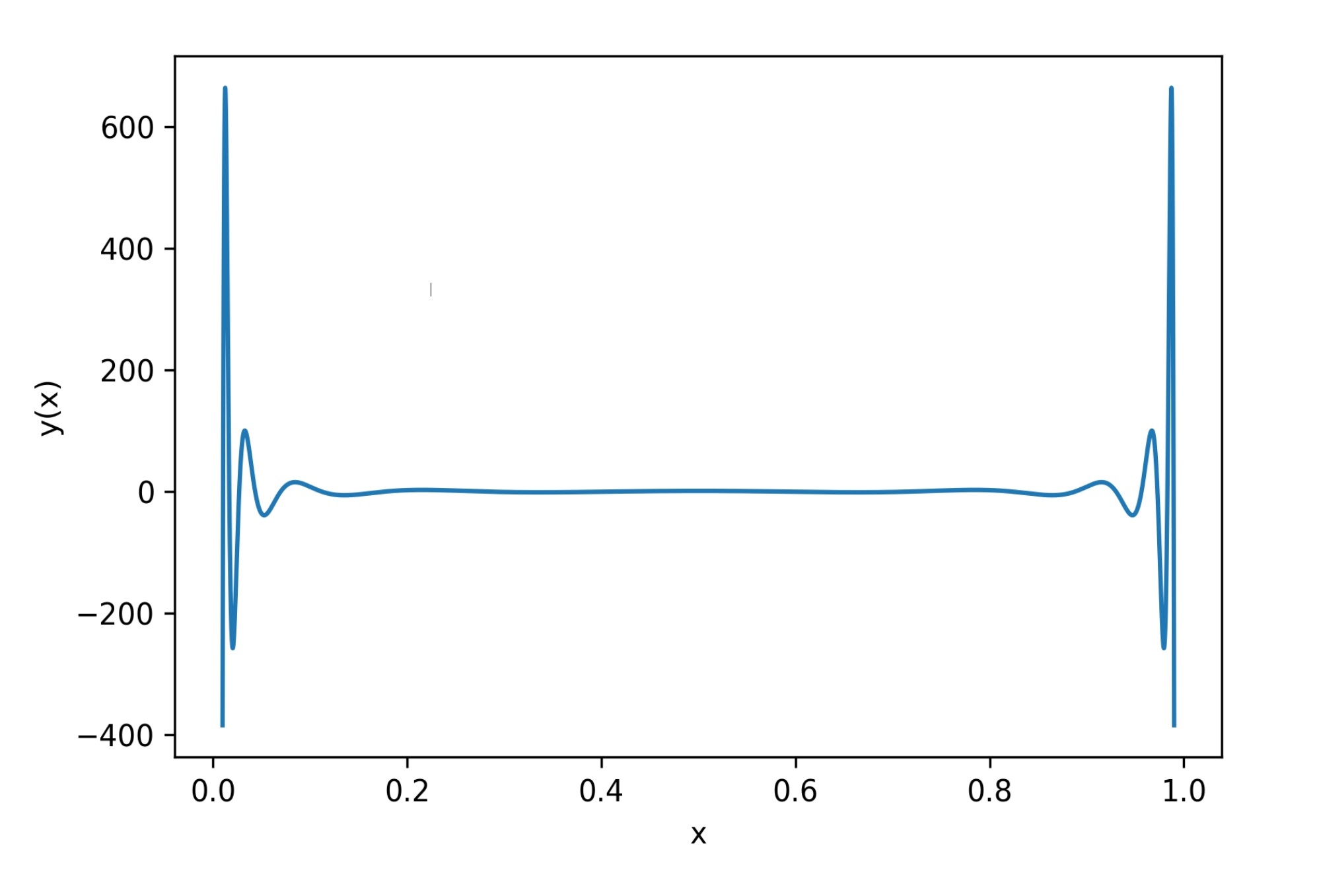}
	\caption{Numerical solution \(y(x)\) on \([10^{-2},\,1-10^{-2}]\)\,.
	The oscillations visible on a logarithmic scale are compatible with the complex
	Frobenius exponents \(\alpha=-2\pm 2i\sqrt{11}\) at the endpoints.}
	\label{fig:y-oscillations-interior}
\end{figure}

\noindent
Guided by these indicial roots, we extract the boundary asymptotics at \(x=0\)
by fitting \(x^2 y(x)\) on shrinking windows near the boundary. We obtain numerically
\[
y(x)
=
\frac{C_0}{x^2}\,\cos\!\bigl(2\sqrt{11}\,\ln x+\phi\bigr)
\;+\;o(x^{-2})\,,
\qquad x\to0^+\,,
\]
with fitted constants
\[
C_0 \approx 0.11325\,,
\qquad
\phi \approx -2.79\,.
\]

\medskip
\noindent
Refining the fit, we construct an explicit boundary approximation
\(y_{\mathrm{app}}\) such that
\[
y - y_{\mathrm{app}} = o(1)\,,
\qquad x \to 0^+ .
\]
Based on the Frobenius analysis below, we introduce the following approximation near \(x=0\), retaining all terms up to order \(o(1)\):
\begin{equation}\label{eq:y-app}
	\begin{aligned}
		y_{\mathrm{app}}(x)
		=\frac{1}{x^{2}}\Bigl[
		&\bigl(A_0 + A_1 x + A_2 x^2 \bigr)
		\cos\!\bigl(2\sqrt{11}\,\ln x\bigr)
		\\
		&+
		\bigl(B_0 + B_1 x + B_2 x^2 \bigr)
		\sin\!\bigl(2\sqrt{11}\,\ln x\bigr)
		\Bigr]\,,
		\qquad x\to0^+ \,.
	\end{aligned}
\end{equation}
The fitted numerical coefficients are
\[
\begin{aligned}
	A_0 &\approx -0.10617307\,,
	&\qquad
	B_0 &\approx \phantom{-}0.03940251\,,
	\\
	|A_1| &\lesssim 10^{-6}\,,
	&\qquad
	|B_1| &\lesssim 10^{-6}\,,
	\\
	A_2 &\approx -0.0923567\,,
	&\qquad
	B_2 &\approx \phantom{-}0.3818868\,.
\end{aligned}
\]


\medskip
\noindent
\textbf{Theoretical Frobenius asymptotics.}
We recall that the indicial equation at \(x=0\) associated with the differential equation in Lemma~\ref{edo03bis} has the following roots:
\[
\rho=-2,\;1,\;3,\;5,\;6,\;7\,,
\qquad
\rho=-2\pm 2i\sqrt{11}\,.
\]
Using \textsc{Mathematica}, we compute the Frobenius expansions associated with
each indicial root.
Truncating the resulting expressions up to terms \(o(1)\) as \(x\to0^+\)\,,
we obtain the following local expansion:
\begin{equation}\label{eq:y-asymptotic-0-clean}
\begin{aligned}
y(x)=\;&
A \Bigl(\frac{120}{x^{2}}-120\Bigr)
\\
&+
B\,x^{-2-2 i\sqrt{11}}
\Bigl(
9375 i+1215\sqrt{11}
+\bigl(6542 i+10474\sqrt{11}\bigr)x^2
\Bigr)
\\
&+
C\,x^{-2+2 i\sqrt{11}}
\Bigl(
-9375 i+1215\sqrt{11}
+\bigl(-6542 i+10474\sqrt{11}\bigr)x^2
\Bigr)
\\
&\;+\;o(1)\,,
\qquad x\to0^+ \,.
\end{aligned}
\end{equation}
for some complex constants \(A,B,C\)\,. Note that the remaining Frobenius solutions associated with the real roots
\(\rho=1,3,5,6,7\) behave like \(O(x)\) or higher powers of \(x\)\,,
and are therefore absorbed into the \(o(1)\) remainder.

\vspace{0.2cm}\noindent
Thus, taking either the real or the imaginary part of the previous 
complex solution $y(x)$ yields a real solution admitting the asymptotic expansion
\begin{equation}\label{eq:y-asymptotic-0}
\begin{aligned}
y(x)
=\;&
C_{-2}\Bigl(\frac{120}{x^{2}}-120\Bigr)
\\
&+\frac{1}{x^{2}}
\Bigl[
\bigl(C_c+\widetilde C_c\,x^{2}\bigr)
\cos\!\bigl(2\sqrt{11}\,\ln x\bigr)
+
\bigl(C_s+\widetilde C_s\,x^{2}\bigr)
\sin\!\bigl(2\sqrt{11}\,\ln x\bigr)
\Bigr]
\\
&\;+\;o(1)\,,
\qquad x\to0^+ \,,
\end{aligned}
\end{equation}
for some real constants \(C_{-2},C_c,C_s,\widetilde C_c,\widetilde C_s\). These constants are not independent: $(C_c,C_s)$ uniquely determines
$(\widetilde C_c,\widetilde C_s)$ through the linear relation
\[
\begin{pmatrix}
	\widetilde C_c\\[1mm]\widetilde C_s
\end{pmatrix}
=
\begin{pmatrix}
	\frac{29}{15} & \frac{13\sqrt{11}}{15}\\[2mm]
	-\frac{13\sqrt{11}}{15} & \frac{29}{15}
\end{pmatrix}
\begin{pmatrix}
	C_c\\[1mm]C_s
\end{pmatrix}.
\]
We also observe that the oscillatory modes do not contain any term of order $x^{-1}\,$.

\medskip
\noindent
\textbf{Comparison with numerics.}
Comparing with the numerical ansatz~\eqref{eq:y-app}, we obtain the identifications
\[
A_0=C_c\,,\qquad B_0=C_s\,,\qquad A_2=\widetilde C_c\,,\qquad B_2=\widetilde C_s\,.
\]
Moreover, the Frobenius structure predicts that
\[
A_1=B_1=0\,.
\]
The fitted numerical values satisfy \(|A_1|,|B_1|\lesssim 10^{-6}\),
which is fully compatible with this theoretical prediction.
Any nonzero values obtained at lower precision should therefore be interpreted
as numerical artefacts.

\smallskip
\noindent
The numerical fit shows no evidence of a non-oscillatory \(x^{-2}\) contribution,
which suggests
\[
C_{-2}=0\,.
\]
Identifying the leading oscillatory coefficients yields
\[
C_c \approx A_0 \approx -0.106173\,,
\qquad
C_s \approx B_0 \approx \phantom{-}0.039403\,.
\]
Equivalently, writing
\[
C_c\,\cos\!\bigl(2\sqrt{11}\,\ln x\bigr)
+
C_s\,\sin\!\bigl(2\sqrt{11}\,\ln x\bigr)
=
C_0\,\cos\!\bigl(2\sqrt{11}\,\ln x+\phi\bigr)\,,
\]
we recover
\[
C_0=\sqrt{C_c^2+C_s^2}\approx 0.11325\,,
\qquad
\phi=\arctan\!\Bigl(-\frac{C_s}{C_c}\Bigr)\approx -2.79\,.
\]

\vspace{0.2cm}\noindent
Finally, the Frobenius expansion implies that the constant-order oscillatory coefficients
\(\widetilde C_c,\widetilde C_s\) are not independent. Using the fitted values of \(C_c\) and \(C_s\)\,, this relation predicts
\[
\widetilde C_c^{\mathrm{pred}} \approx -0.0917\,,
\qquad
\widetilde C_s^{\mathrm{pred}} \approx \phantom{-}0.3819\,.
\]
These values are in excellent agreement with the numerical coefficients
\[
\widetilde C_c \approx A_2 \approx -0.09236\,,
\qquad
\widetilde C_s \approx B_2 \approx \phantom{-}0.38189\,,
\]
with relative discrepancies of approximately \(0.7\, \%\) for \(\widetilde C_c\)
and \(3\times10^{-4} \, \% \) for \(\widetilde C_s\)\,.
This quantitative agreement provides strong numerical evidence
for the presence of the quadratic correction in the oscillatory
Frobenius modes.

\medskip
\noindent
\textbf{Role of the numerical analysis.}
At this stage, it is important to clarify the role played by the numerical computations.
They are used solely to detect that the solution \(y\) of the eighth--order equation
is not bounded near \(x=0\) and \(x=1\)\,.
This observation implies that at least one of the singular Frobenius modes
associated with the indicial roots
\[
\rho=-2
\qquad\text{or}\qquad
\rho=-2\pm 2 i\sqrt{11}
\]
is present in the expansion of \(y\) near the endpoints.
No numerical information on the corresponding coefficients is required for this conclusion.

\smallskip
\noindent
In particular, the proof that the primitive \(\zeta\) does not belong to \(L^2(0,1)\)
relies only on the qualitative oscillatory behaviour induced by these Frobenius modes,
and not on the explicit values of the constants appearing in the asymptotic expansion.
The numerical computation of the coefficients
\((C_c,C_s,\widetilde C_c,\widetilde C_s)\)
should therefore be viewed as an additional quantitative illustration,
confirming the Frobenius structure predicted by the theory.

\medskip
\noindent
\textbf{Non--square integrability of the primitive.}
Let \(\zeta\) be a primitive of \(y\), namely
\[
\zeta(x):=\int_{1/2}^{x} y(s)\,ds\, ,
\qquad x\in(0,1).
\]
At this stage, and for the sake of generality, we do not assume that \(C_{-2}=0\)\,.
Integrating the leading terms in the asymptotic expansion of \(y\) as \(x\to0^+\)
yields
\[
\zeta(x)
=
\frac{-120\,C_{-2}}{x}
+
\frac{C_0}{45}\,
\frac{
	-\cos\!\bigl(2\sqrt{11}\,\ln x + \phi\bigr)
	+ 2\sqrt{11}\,\sin\!\bigl(2\sqrt{11}\,\ln x + \phi\bigr)
}{x}
+ O(1)\,,
\qquad x\to0^+\,.
\]
Setting \(t=-\ln x\)\,, we can rewrite this behaviour as
\[
\zeta(e^{-t}) = e^{t}\,A(t) + O(1)\,,
\qquad t\to+\infty,
\]
where the function \(A\) is bounded, periodic, and explicitly given by
\[
A(t)
:=
-120\,C_{-2}
+
\frac{C_0}{45}
\Bigl(
-\cos\!\bigl(2\sqrt{11}\,t-\phi\bigr)
-2\sqrt{11}\,\sin\!\bigl(2\sqrt{11}\,t-\phi\bigr)
\Bigr)\,.
\]
Since \(A\not\equiv 0\) and \(A\) is continuous and periodic, there exist constants
\(\delta>0\) and \(c_0>0\)\,, and a sequence \(t_n\to+\infty\)\,, such that
\[
|A(t)|\ge c_0
\qquad\text{for all }t\in I_n:=[t_n,t_n+\delta]\,.
\]
Consequently,
\[
\int_0^1 |\zeta(x)|^2\,dx
=
\int_0^{+\infty} |\zeta(e^{-t})|^2\,e^{-t}\,dt
=
\int_0^{+\infty} e^{t}\,|A(t)|^2\,dt
=+\infty\,.
\]
because
\[
\int_0^{+\infty} e^{t}\,|A(t)|^2\,dt
\;\ge\;
\sum_{n}\int_{I_n} e^{t}\,|A(t)|^2\,dt
\;\ge\;
c_0^2\sum_{n}\int_{t_n}^{t_n+\delta} e^{t}\,dt
=+\infty\,.
\]
This proves that \(\zeta\notin L^2(0,1)\)\,.

\vspace{0.2cm}\noindent
\medskip
\noindent
\textbf{Step~4: Conclusion.}
We therefore conclude that one must take \( y(\tfrac12)=0 \)\,.
By the triangular system \((S)\), all derivatives of \( y \) at \( x=\tfrac12 \) up to order \(7\)
vanish. The Cauchy--Lipschitz theorem then implies that \( y \equiv 0 \) on \( (0,1) \).
Consequently, \( \zeta \equiv 0 \) on \( (0,1) \)\,.

\vspace{0.3cm}
\noindent
In particular, this implies that the differential of the spectral map associated with the pair \( (\ell_1,\ell_2)=(0,3) \) is injective\,.



\section{Conclusion}

The techniques developed in this paper rely on the Kneser--Sommerfeld formula.
In the case $(\ell_1,\ell_2)=(0,3)\,$, the analysis required the use of computer-assisted
numerical computations.
Even with the assistance of computer algebra, the study of the resulting differential
equations already becomes rapidly cumbersome in the case $(\ell_1,\ell_2)=(1,3)\,$. 
Larger values of $(\ell_1,\ell_2)$ currently seem out of reach, which suggests that
a more conceptual approach is needed.

\vspace{0.2cm}\noindent
We are nevertheless led to the following natural conjecture, originally formulated by Rundell and Sacks: \emph{local uniqueness holds near the zero potential as soon as the Dirichlet spectra are known for two distinct values of \(\ell\)}.


\appendix
\section{Proof of Theorem~\ref{thm:global}}
\label{app:A} 
\renewcommand{\thethm}{A.\arabic{thm}}

\noindent
In this appendix, we briefly recall the scattering-theoretic framework underlying 
Theorem~\ref{thm:global}. 
Throughout this section, we fix the energy $\lambda \in \mathbb{C}$ and consider the radial Schr\"odinger equation
\begin{equation}\label{eq:radial_scattering}
- u''(r) + \left( \frac{\ell(\ell+1)}{r^2} + \hat{q}(r) \right) u(r) = \lambda\, u(r)\,,
\qquad r >0,
\end{equation}
where $\hat{q}$ denotes the extension of $q \in L^2 (0,1)$ by zero for $r \ge 1$.\footnote{In the remainder of this appendix, we omit the hat notation and use the same notation $q$ for the potential in $L^2(0,1)$  and its extension by $0$ in $L^2(0,+\infty)$\,.}

\vspace{0.2cm}\noindent
For each \(\nu = \ell + \tfrac12\), we denote by 
\(\varphi(r,\nu,\lambda)\) the \emph{regular solution} of \eqref{eq:radial_scattering}, 
characterized by the boundary condition
\begin{equation}
\varphi(r,\nu,\lambda) \sim r^{\nu+\frac12}\,, \qquad r \to 0\,.
\end{equation}
 By definition, a solution of~(B.1) is a \(C^1\)-function whose derivative with respect to \(r\) is locally absolutely continuous on \((0,+\infty)\).
\vspace{0.2cm}\noindent
For instance, when \(q \equiv 0\), the regular solution is explicitly given by
\begin{equation}
\varphi_0(r,\nu,\lambda)
= 2^{\nu}\, \lambda^{-\nu/2}\, \Gamma(\nu+1)\, \sqrt{r}\, J_{\nu}(\sqrt{\lambda}\, r)\,,
\end{equation}
where \(\sqrt{\lambda}\) is defined by the principal branch.\footnote{By convention, if 
\(\lambda = |\lambda| e^{i\theta}\) with \(\theta \in (-\pi,\pi]\)\,, 
then \(\sqrt{\lambda} = \sqrt{|\lambda|}\, e^{i\theta/2}\)\,.}

\vspace{0.2cm}\noindent
The existence and uniqueness of a regular solution are ensured by the following theorem, 
proved in~\cite[Lemma~2.2]{KST10} (see also~\cite[Lemma~B.2]{KoTe13}) for locally integrable potentials \(q\) on \((0,+\infty)\) 
under the additional assumption \(\int_0^1 r\,|q(r)|\,dr < \infty\,.\)

\begin{thm}\label{thm:solreg}
There exists a unique regular solution \(\varphi(r,\nu,\lambda)\) of 
\eqref{eq:radial_scattering} satisfying:
\begin{itemize}
    \item[(i)] The map \(\lambda \mapsto \varphi(r,\nu,\lambda)\) is an entire function of order $\frac12$\,.
    \item[(ii)] There exists $C>0$ such that for all  \(r>0\) and $\lambda \in \C$\,,
    \begin{equation}
    |\varphi(r,\nu,\lambda)| \;\le\; 
    C \left( \frac{r}{1 + |\sqrt{\lambda}|\,r} \right)^{\ell+1} 
    e^{|\Im \sqrt{\lambda}|\,r}\,.
    \end{equation}
    \item[(iii)] Moreover, there exists $C>0$ such that for all  \(r >0\) and  $\lambda \in \C$\,,
    \begin{equation}
    \bigl|\varphi(r,\nu,\lambda) - \varphi_0(r,\nu,\lambda)\bigr|
    \;\le\;
    C \left( \frac{r}{1 + |\sqrt{\lambda}|\,r} \right)^{\ell+1} 
    e^{|\Im \sqrt{\lambda}|\,r}
    \int_0^r \frac{t\,|q(t)|}{1 + |\sqrt{\lambda}|\,t}\,dt\,.
    \end{equation}
\end{itemize}
\end{thm}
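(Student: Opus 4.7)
The plan is to construct $\varphi$ via a Volterra integral equation anchored at $r=0$, solve it by Picard iteration, and read off entirety, uniqueness, and the bounds (ii)--(iii) from the resulting Neumann series.

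First, I would recall the standard estimates on the free regular solution $\varphi_0$. Using the series representation of $J_\nu$ together with the classical uniform bound $|J_\nu(z)| \leq C_\nu (|z|/(1+|z|))^\nu e^{|\Im z|}$, one verifies estimate (ii) for $\varphi_0$; the factor $\lambda^{-\nu/2}$ is cancelled by the leading $(\sqrt{\lambda})^\nu$ in the series of $J_\nu$, so $\varphi_0$ is in fact entire in $\lambda$, of order $\tfrac{1}{2}$. In parallel, I would introduce a second, linearly independent solution $\theta_0$ of the free equation ($q\equiv 0$), built from $J_{-\nu}$ or $Y_\nu$ and normalized so that the Wronskian $W(\varphi_0,\theta_0) \equiv 1$, enjoying a complementary bound chosen to ensure the kernel estimate below.

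Next, by variation of parameters, any regular solution $\varphi$ of \eqref{eq:radial_scattering} satisfies the Volterra integral equation
\begin{equation}
\varphi(r,\nu,\lambda) = \varphi_0(r,\nu,\lambda) + \int_0^r K(r,t,\nu,\lambda)\,q(t)\,\varphi(t,\nu,\lambda)\,dt, \notag
\end{equation}
where $K(r,t,\nu,\lambda) := \varphi_0(r,\nu,\lambda)\theta_0(t,\nu,\lambda) - \theta_0(r,\nu,\lambda)\varphi_0(t,\nu,\lambda)$. Combining the bounds on $\varphi_0$ and $\theta_0$ yields the pointwise estimate
\begin{equation}
|K(r,t,\nu,\lambda)| \leq C \left(\frac{r}{1+|\sqrt{\lambda}|r}\right)^{\ell+1}\frac{t}{1+|\sqrt{\lambda}|t}\,e^{|\Im\sqrt{\lambda}|(r-t)}, \qquad 0 < t \leq r. \notag
\end{equation}
I then set up the Picard iteration $\varphi = \sum_{n\geq 0} \varphi_n$, with $\varphi_{n+1}(r) := \int_0^r K(r,t,\nu,\lambda)\,q(t)\,\varphi_n(t)\,dt$. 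An induction on $n$ gives
\begin{equation}
|\varphi_n(r,\nu,\lambda)| \leq \frac{C^n}{n!}\left(\frac{r}{1+|\sqrt{\lambda}|r}\right)^{\ell+1} e^{|\Im\sqrt{\lambda}|r}\left(\int_0^r \frac{t\,|q(t)|}{1+|\sqrt{\lambda}|t}\,dt\right)^n, \notag
\end{equation}
and the hypothesis $\int_0^1 t|q(t)|\,dt < \infty$ together with the vanishing of $q$ on $[1,\infty)$ ensures absolute, locally uniform convergence of the series on $(0,\infty)\times\mathbb{C}$.

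From here all conclusions fall out. Summing the series yields (ii); isolating the $n=0$ term and bounding the remainder by $\exp\bigl(C\int_0^r t|q(t)|/(1+|\sqrt{\lambda}|t)\,dt\bigr)-1$ times the $\varphi_0$-envelope gives (iii). Each $\varphi_n$ is entire in $\lambda$ (an iterated integral of entire functions), and locally uniform convergence transfers entirety to $\varphi$; the exponential factor $e^{|\Im\sqrt{\lambda}|r}$ then bounds $|\varphi|$ by $Ce^{C|\lambda|^{1/2}}$ on compact $r$-sets, giving order $\tfrac{1}{2}$. Uniqueness follows because the difference of two regular solutions with the same boundary behavior at $0$ satisfies the homogeneous Volterra equation, and the same iteration forces it to vanish. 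The main technical obstacle is the simultaneous control of the centrifugal singularity at $r=0$ and the integrability of $q$ there: $\theta_0(t)$ blows up like $t^{-\ell}$, but this is exactly compensated by the vanishing $\varphi_0(t) \sim t^{\ell+1}$ inside $K$, leaving the harmless weight $t$ against $q(t)$, which is precisely why $\int_0^1 t|q(t)|\,dt<\infty$ is the natural hypothesis. Verifying that this compensation persists \emph{uniformly in $\lambda$}, through the scaling factors $(1+|\sqrt\lambda|r)$ and $(1+|\sqrt\lambda|t)$ in the kernel estimate, is the crux of the argument and is what allows the Neumann series to converge on all of $\mathbb{C}$ at the rate needed to deduce the order-$\tfrac{1}{2}$ growth.
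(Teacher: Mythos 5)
The paper does not prove this theorem; it cites it from the references \cite{KST10} (Lemma 2.2) and \cite{KoTe13} (Lemma B.2). Your overall strategy — a Volterra integral equation anchored at $r=0$, solved by Picard iteration with the $\varphi_0$--$\theta_0$ Green kernel — is exactly the standard route taken in those references, and the inductive bound you state for $|\varphi_n|$ is the correct one. However, the kernel estimate you wrote is wrong, and not merely by an over- or undershoot. You claim
\[
|K(r,t,\nu,\lambda)| \;\le\; C\left(\frac{r}{1+|\sqrt{\lambda}|r}\right)^{\ell+1}\frac{t}{1+|\sqrt{\lambda}|t}\,e^{|\Im\sqrt{\lambda}|(r-t)},
\]
which vanishes like $t$ as $t\to 0$. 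But $K(r,t)=\varphi_0(r)\theta_0(t)-\theta_0(r)\varphi_0(t)$ contains $\theta_0(t)\sim t^{-\ell}$ and therefore blows up like $r^{\ell+1}t^{-\ell}$ for $\ell\ge 1$ (and is bounded away from zero at $t=0$ even when $\ell=0$). The correct bound is
\[
|K(r,t,\nu,\lambda)| \;\le\; C\left(\frac{r}{1+|\sqrt{\lambda}|r}\right)^{\ell+1}\left(\frac{1+|\sqrt{\lambda}|t}{t}\right)^{\ell}e^{|\Im\sqrt{\lambda}|(r-t)},\qquad 0<t\le r,
\]
and multiplying this by $|\varphi_0(t)|\lesssim (t/(1+|\sqrt\lambda|t))^{\ell+1}e^{|\Im\sqrt\lambda|t}$ is what produces the innocuous weight $t/(1+|\sqrt\lambda|t)$ against $|q(t)|$. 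You in fact describe this compensation correctly in your closing paragraph; the cancellation between $\theta_0(t)\sim t^{-\ell}$ and $\varphi_0(t)\sim t^{\ell+1}$ happens in the product $K(r,t)\varphi_n(t)$ inside the Picard integrand, not inside $K$ itself, so your written kernel bound is inconsistent with your own explanation. With the kernel bound corrected, the induction, (ii), (iii), and entirety go through as you indicate.

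A second, less cosmetic gap: the factor $e^{|\Im\sqrt\lambda|(r-t)}$ cannot be obtained by ``combining the bounds on $\varphi_0$ and $\theta_0$'' term by term, because each of $\varphi_0(r)\theta_0(t)$ and $\theta_0(r)\varphi_0(t)$ separately satisfies only an $e^{|\Im\sqrt\lambda|(r+t)}$ bound. One has to use the cross-product structure of $K$ (it behaves like $\sin(\sqrt\lambda(r-t))/\sqrt\lambda$ near the diagonal, as seen from the Bessel/Hankel asymptotics), since without this cancellation the Neumann series would only yield an $e^{3|\Im\sqrt\lambda|r}$-type bound, which is too weak for (ii) and for the order-$\tfrac12$ statement. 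Also note that your argument only gives order $\le\tfrac12$; that the order is exactly $\tfrac12$ follows from the Hadamard factorization of $\varphi(1,\nu,\cdot)$ together with $\lambda_{\ell,n}\sim n^2\pi^2$.
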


\vspace{0.2cm}\noindent
The \emph{Dirichlet spectrum} associated with a fixed angular momentum~$\ell$ 
(or equivalently $\nu = \ell + \tfrac12$) is defined as the set of values 
\(\{\lambda_{\ell,n}(q)\}_{n\ge1}\) such that the regular solution satisfies 
the boundary condition 
\begin{equation}
\varphi(1,\nu,\lambda_{\ell,n}(q)) = 0\,.
\end{equation}
Equivalently, the Dirichlet spectrum consists of the zeros of the entire function 
\(\lambda \mapsto \varphi(1,\nu,\lambda)\)\,. 

\vspace{0.2cm}\noindent
It is well known that, for such potentials $q$ and for each fixed angular momentum $\ell \in \mathbb{N}$, the associated Dirichlet spectrum forms a countable sequence of simple, real eigenvalues $\{\lambda_{\ell,n}(q)\}_{n\ge 1}$ diverging to infinity. These facts are classical and can be found in~\cite{CarlShu94,Carlson97}. 

\vspace{0.2cm}\noindent
For instance, when \(q \equiv 0\), the Dirichlet eigenvalues are explicitly given by 
\begin{equation}
\lambda_{\ell,n}(0) = j_{\nu,n}^2\,,
\end{equation}
where \(j_{\nu,n}\) denotes the \(n\)-th positive zero of \(J_\nu(z)\). Recall that the Bessel function \(J_\nu(z)\) admits the classical Hadamard factorization
\begin{equation}
J_\nu(z) 
= \frac{(z/2)^{\nu}}{\Gamma(\nu+1)}
\prod_{n=1}^{\infty} \left(1 - \frac{z^2}{j_{\nu,n}^2}\right)\,.
\end{equation}
As a consequence, the free regular solution evaluated at \(r=1\) can be written as
\begin{equation}
\varphi_0(1,\nu,\lambda)
= \prod_{n=1}^{\infty} \left(1 - \frac{\lambda}{j_{\nu,n}^2}\right)\,.
\end{equation}

\vspace{0.2cm}\noindent
Similarly, by applying Hadamard's factorization theorem, 
\(\varphi(1,\nu,\lambda)\) admits the following canonical product representation:

\begin{lemma}
\label{lem:factorization}
For each fixed \(\nu = \ell + \tfrac12\) and \(q \in L^2(0,1)\)\,, the regular solution satisfies
\[
\varphi(1,\nu,\lambda)
= C_\nu(q) \ \lambda^m \prod_{n=1}^{\infty}
\left(1 - \frac{\lambda}{\lambda_{\ell,n}(q)}\right)\,,
\]
where \(C_\nu(q) \neq 0\) is a constant depending on \(\nu\) and \(q\), and \(m \in \{0,1\}\) denotes the order of the zero at \(\lambda = 0\).
\end{lemma}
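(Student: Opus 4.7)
The plan is a straightforward application of Hadamard's factorization theorem to the entire function $\lambda \mapsto \varphi(1,\nu,\lambda)$. First, I would verify that this function has order at most $\tfrac{1}{2}$. This is asserted in Theorem~B.1(i), but can also be read directly off estimate~(ii) taken at $r=1$, which gives
$$|\varphi(1,\nu,\lambda)| \;\leq\; C\,(1+|\sqrt{\lambda}|)^{-(\ell+1)}\,e^{|\Im \sqrt{\lambda}|}.$$
Hence $\log|\varphi(1,\nu,\lambda)| = O(|\lambda|^{1/2})$ uniformly on $\mathbb{C}$, and the genus of the associated canonical product is~$0$.

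Next, I would collect the spectral facts already recalled in the excerpt: the zeros of $\lambda\mapsto \varphi(1,\nu,\lambda)$ are exactly the Dirichlet eigenvalues $\{\lambda_{\ell,n}(q)\}_{n\geq 1}$, which are real and simple, with Weyl-type asymptotics $\lambda_{\ell,n}(q)\sim (n\pi)^2$ provided by \eqref{asymptoticsvps}. In particular $\sum_n 1/|\lambda_{\ell,n}(q)|<\infty$, so the canonical product
$$\prod_{\lambda_{\ell,n}(q)\neq 0} \left(1-\frac{\lambda}{\lambda_{\ell,n}(q)}\right)$$
converges absolutely and uniformly on compact subsets of $\mathbb{C}$. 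Moreover, because the eigenvalues are simple, the possible zero of $\varphi(1,\nu,\cdot)$ at the origin has multiplicity $m\in\{0,1\}$.

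The core step is then the classical Hadamard theorem: any entire function of order $\rho<1$ admits the representation $e^{P(\lambda)}\,\lambda^m\prod_n (1-\lambda/z_n)$ with $P$ a polynomial of degree at most $\lfloor \rho\rfloor =0$, hence a constant. Setting $C_\nu(q) := e^{P}$ yields the claimed factorization. The non-vanishing of $C_\nu(q)$ is automatic: by construction $\varphi(\cdot,\nu,\lambda)$ is a nontrivial solution of~\eqref{eq:radial_scattering} for every $\lambda$ (its leading behavior at the origin is $r^{\nu+1/2}\neq 0$), so $\lambda\mapsto \varphi(1,\nu,\lambda)$ is not identically zero, which forces $C_\nu(q)\neq 0$. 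There is no real analytical obstacle here; the only care needed is keeping track of a possible zero at $\lambda=0$, and everything reduces to Hadamard's theorem once Theorem~B.1 and the eigenvalue asymptotics are invoked.
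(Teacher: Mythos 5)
Your proposal is correct and takes the same route as the paper, which simply invokes Hadamard's factorization theorem for the entire function \(\lambda \mapsto \varphi(1,\nu,\lambda)\) of order \(\tfrac12\) (Theorem~B.1(i)); you have merely spelled out the details (genus \(0\), degree-zero exponential factor, simplicity of the zeros forcing \(m\in\{0,1\}\), non-triviality of the regular solution giving \(C_\nu(q)\neq 0\)). One small inaccuracy: by \eqref{asymptoticsvps} the eigenvalues satisfy \(\lambda_{\ell,n}(q)\sim \bigl((n+\tfrac{\ell}{2})\pi\bigr)^2\) rather than \((n\pi)^2\), but this does not affect the convergence of \(\sum_n 1/|\lambda_{\ell,n}(q)|\) nor the argument.
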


\vspace{0.2cm}\noindent
We next show that equality of the Dirichlet spectra implies equality of the regular solutions evaluated at \(r=1\)\,.

\begin{lemma}
\label{lem:factorization_equality}
Let $\nu = \ell + \tfrac12$ and let  $q\,,\, \tilde q \in L^2 (0,1)$ be two potentials such that
their Dirichlet spectra coincide:
\begin{equation}
\{\lambda_{\ell,n}(q)\}_{n\ge1} = \{\lambda_{\ell,n}(\tilde{q})\}_{n\ge1}\,.
\end{equation}
Then the corresponding regular solutions satisfy
\begin{equation}
\varphi(1,\nu,\lambda;q) = \varphi(1,\nu,\lambda;\tilde q)\,, \qquad \forall\, \lambda \in \mathbb{C}\,.
\end{equation}
\end{lemma}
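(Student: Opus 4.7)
The plan is to reduce the equality of the two entire functions \(\lambda \mapsto \varphi(1,\nu,\lambda;q)\) and \(\lambda \mapsto \varphi(1,\nu,\lambda;\tilde q)\) to an equality of their zero sets via Lemma~\ref{lem:factorization}, and then to pin down the remaining multiplicative constant through an asymptotic comparison along the negative real axis.

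First, I would apply Lemma~\ref{lem:factorization} to both \(q\) and \(\tilde q\). Since the Dirichlet spectra coincide as sequences, the non-zero zeros of the two entire functions match, and the multiplicity \(m\in\{0,1\}\) of the zero at \(\lambda=0\) is common as well (it is determined entirely by whether \(0\) belongs to the shared spectrum). This yields
\[
\varphi(1,\nu,\lambda;q) \;=\; \kappa\, \varphi(1,\nu,\lambda;\tilde q),
\qquad \kappa := \frac{C_\nu(q)}{C_\nu(\tilde q)} \in \mathbb{R}^{*},
\]
so the task reduces to proving that \(\kappa=1\).

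To identify \(\kappa\), I would set \(\lambda = -\mu\) with \(\mu\to+\infty\) and compute the leading asymptotics of both sides. Using the explicit formula for \(\varphi_0\), together with \(J_\nu(iz) = i^{\nu} I_\nu(z)\) and the classical expansion \(I_\nu(z) \sim e^{z}/\sqrt{2\pi z}\) as \(z\to+\infty\), one obtains
\[
\varphi_0(1,\nu,-\mu) \;\sim\; c_\nu\, \frac{e^{\sqrt{\mu}}}{\mu^{(\ell+1)/2}}, \qquad \mu\to+\infty,
\]
for some nonzero constant \(c_\nu\). On the other hand, estimate~(iii) of Theorem~B.1 applied at \(r=1\), together with \(|\Im\sqrt\lambda|=\sqrt\mu\), gives
\[
\bigl|\varphi(1,\nu,-\mu;q)-\varphi_0(1,\nu,-\mu)\bigr|
\;\le\;
\frac{C\,e^{\sqrt{\mu}}}{(1+\sqrt{\mu})^{\ell+1}}
\int_0^1 \frac{t\,|q(t)|}{1+\sqrt{\mu}\,t}\,dt,
\]
and the integral on the right tends to \(0\) as \(\mu\to\infty\) by dominated convergence (the function \(t\mapsto t|q(t)|\) belongs to \(L^1(0,1)\) via Cauchy--Schwarz). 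Dividing by \(\varphi_0(1,\nu,-\mu)\) shows that \(\varphi(1,\nu,-\mu;q)/\varphi_0(1,\nu,-\mu)\to 1\), and the same argument applies with \(\tilde q\) in place of \(q\); taking the quotient forces \(\kappa = 1\).

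The main technical point is the bookkeeping of exponents in this asymptotic matching: one must verify that the polynomial decay \((1+\sqrt\mu)^{-(\ell+1)}\) appearing in Theorem~B.1(iii) is exactly the inverse of the \(\mu^{-(\ell+1)/2}\) prefactor governing the growth of \(\varphi_0(1,\nu,-\mu)\), so that the perturbative error is genuinely \(o(\varphi_0)\) and does not spoil the comparison. Once this matching is secured, the conclusion follows in a few lines from Lemma~\ref{lem:factorization}.
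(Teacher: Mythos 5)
Your proposal is correct and follows the same overall strategy as the paper's proof: invoke the Hadamard factorization of Lemma~\ref{lem:factorization} to reduce the statement to showing that the leading constants $C_\nu(q)$ and $C_\nu(\tilde q)$ agree, and then identify that constant by comparing $\varphi(1,\nu,\lambda)$ asymptotically to the known free solution $\varphi_0(1,\nu,\lambda)$ via the perturbative bound of Theorem~B.1(iii). The one tactical difference lies in the direction along which you send $\lambda$ to infinity. The paper works along the positive real axis, writing $\lambda = k^2 \to +\infty$, where $\varphi_0(1,\nu,k^2) \sim c_\nu k^{-\ell-1}\cos\bigl(k-\tfrac{\pi\nu}{2}-\tfrac{\pi}{4}\bigr)$ oscillates; to avoid the zeros of the cosine the paper must select the special subsequence $k_n = \tfrac{\pi}{2}(\nu+\tfrac12)+2\pi n$ where the cosine equals $1$. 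You instead go along the negative real axis $\lambda=-\mu$, where $\sqrt{\lambda}=i\sqrt{\mu}$, $J_\nu(i\sqrt{\mu})=i^\nu I_\nu(\sqrt\mu)$, and $\varphi_0(1,\nu,-\mu)\sim c_\nu\,\mu^{-(\ell+1)/2}e^{\sqrt\mu}$ grows monotonically and never vanishes; no subsequence is needed, and the dominated-convergence argument for the integral in Theorem~B.1(iii) together with the exact exponent matching $(1+\sqrt\mu)^{-(\ell+1)}\sim\mu^{-(\ell+1)/2}$ gives $\varphi/\varphi_0\to 1$ directly. Your bookkeeping of exponents is correct (with $\nu=\ell+\tfrac12$ one has $\tfrac{\nu}{2}+\tfrac14=\tfrac{\ell+1}{2}$, and the phase factors $(-\mu)^{-\nu/2}$ and $i^\nu$ cancel), and the route is arguably a hair cleaner than the paper's since it sidesteps the choice of a subsequence, at the modest cost of invoking the modified Bessel asymptotics.
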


\begin{proof}
We denote by a tilde all quantities associated with the potential~$\tilde q$, and, 
for brevity, we write $\varphi(1,\nu,\lambda;q)$ simply as $\varphi(1,\nu,\lambda)$. Since the Dirichlet spectra coincide, we have $m = \tilde m$\,. By Theorem~\ref{thm:solreg} (with \(r=1\)), for real \(\lambda = k^2\) and \(|k|\to\infty\)\,,
\begin{equation}
\bigl|\varphi(1,\nu,\lambda) - \varphi_0(1,\nu,\lambda)\bigr|
= \mathcal{O}(|k|^{-\ell-2})\,.
\end{equation}
From the standard asymptotics of Bessel functions, one has as $\lambda \to + \infty$,
\begin{equation}
\varphi_0(1,\nu,\lambda)
\sim c_\nu\, k^{-\ell-1}
\cos\!\Big(k - \tfrac{\pi\nu}{2} - \tfrac{\pi}{4}\Big)\,,
\qquad k = \sqrt{\lambda} \in \mathbb{R}\,.
\end{equation}
Choosing 
\begin{equation}
k_n = \tfrac{\pi}{2}\Big(\nu + \tfrac12\Big) + 2\pi n\,, 
\qquad \lambda_n = k_n^2\,,
\end{equation}
with $n \in \N$\,, so that \(\cos(\cdot)=1\)\,, we obtain as $n \to + \infty$\,,
\begin{equation}
\varphi(1,\nu,\lambda_n)
\sim \varphi_0(1,\nu,\lambda_n)
\sim \tilde{\varphi}(1,\nu,\lambda_n)\,.
\end{equation}
Therefore, using Lemma \ref {lem:factorization}, the leading coefficients coincide, that is, \(C_\nu(q) = C_\nu(\tilde q)\), which completes the proof.
\end{proof}

\vspace{0.5cm}\noindent
One can also define the 
\emph{Jost solutions} \(f^{\pm}(r,\nu,\lambda)\) as the unique \(C^1\) solutions with respect to \(r\) of 
\eqref{eq:radial_scattering} satisfying the oscillatory asymptotics
\begin{equation}
f^{\pm}(r,\nu,\lambda) \sim e^{\pm i \sqrt{\lambda}\, r}\,, 
\qquad r \to +\infty\,.
\end{equation}
The proof of this result is given in~\cite[Lemma~B.4]{KoTe13} under the assumptions 
\(\int_0^1 r\,|q(r)|\,dr < \infty\) and \(\int_1^{+\infty} |q(r)|\,dr < \infty\,.\)
In particular, for \(\lambda = 1\) (which is the case considered in~\cite{DaNi16} and~\cite{Ra99}), and for such potentials supported in \([0,1]\), one has, for all \(r \ge 1\) 
(see~\cite[Lemma~B.3]{KoTe13}),
\begin{equation}\label{fp0}
f^{+}(r,\nu,1)
= e^{i(\nu+\frac12)\frac{\pi}{2}}\,
\sqrt{\frac{\pi r}{2}}\,
H_{\nu}^{(1)}(r)
\end{equation}
and
\begin{equation}\label{fm0}
f^{-}(r,\nu,1)
= e^{-i(\nu+\frac12)\frac{\pi}{2}}\,
\sqrt{\frac{\pi r}{2}}\,
H_{\nu}^{(2)}(r)\,,
\end{equation}
where $H_{\nu}^{(j)}(r)$ denotes the Hankel function of order~$\nu$.  It is a fundamental observation that the Jost solutions are \emph{entirely independent of the potential} for all \(r \ge 1\)\,.

\vspace{0.2cm}\noindent
The pair of Jost solutions \( \{f^{+}(r,\nu,1),\, f^{-}(r,\nu,1)\} \) forms a fundamental system of solutions of~\eqref{eq:radial_scattering}. 
Hence, at the fixed energy \(\lambda = 1\), the regular solution can be written as
\begin{equation}\label{SFS}
\varphi(r,\nu,1) = \alpha(\nu)\, f^{+}(r,\nu,1) + \beta(\nu)\, f^{-}(r,\nu,1)\,,
\end{equation}
where \(\alpha(\nu),\, \beta(\nu) \in \mathbb{C}\) are called the \emph{Jost functions}.

\vspace{0.2cm}\noindent
We now recall some basic properties of the Jost functions 
(see~\cite{DaNi16} for details). 
They satisfy the conjugation relation 
\(\overline{\alpha(\nu)} = \beta(\nu)\)\,, 
and neither \(\alpha(\nu)\) nor \(\beta(\nu)\) vanishes. 
This allows us to introduce the \emph{Regge interpolation}:
\begin{equation}\label{interpolation}
\sigma(\nu) = e^{i\pi(\nu + \frac12)}\, \frac{\alpha(\nu)}{\beta(\nu)}\,.
\end{equation}
Note that, following~\cite{Re59}, the so-called \emph{phase shifts}~$\delta(\nu)$ are defined as a continuous function of~$\nu \in (0,+\infty)$ through the relation
\[
\sigma(\nu) = e^{2i\,\delta(\nu)}\,.
\]
The phase shifts become uniquely determined by imposing the condition $\delta(\nu) \to 0$ as $\nu \to +\infty$, and they can be  meromorphically  continued to complex values of~$\nu$\,.

\vspace{0.2cm}\noindent
Consider now a sequence \((\ell_k)_{k\ge1}\) of positive integers satisfying 
\begin{equation}
\sum_{k} \frac{1}{\ell_k} = +\infty\,,
\end{equation}
that is, the \emph{M\"untz condition}. 
Then the following uniqueness result holds (see Theorem~2.1 in~\cite{Ra99}):  
if \(q, \tilde q \in L^2(0,1)\) are two potentials whose Regge interpolation functions 
coincide for \(\nu_k = \ell_k + \tfrac12\)\,, then 
\begin{equation}
q = \tilde q \quad \text{a.e. on } (0,1)\,.
\end{equation}

\vspace{0.2cm}\noindent
Assume now that \(q\) and \(\tilde q\) share the same Dirichlet spectra 
\(\{\lambda_{\ell_k,n}(q)\}_{k,n \ge 1}\)\,. 
Since the Jost solutions \(f^{\pm}(r,\nu,1)\) never vanish 
(see~\cite[Lemma~4.2]{DaNi16}) and are \emph{independent of the potential at $r=1$}\,, 
it follows from~\eqref{SFS} and Lemma~\ref{lem:factorization_equality} 
that the corresponding Regge interpolation functions 
coincide for \(\nu_k = \ell_k + \tfrac{1}{2}\)\,. 
This concludes the proof of Theorem~\ref{thm:global}.



\section{Proof of Theorem~\ref{Closed range} }\label{app:B}
\renewcommand{\thethm}{B.\arabic{thm}}

In this section, we prove that the range of the differential of the spectral map
at the zero potential is closed in the cases $(\ell_1,\ell_2) = (0,1)\,, \  (1,2)$ and $(0,3)$\,.
Moreover, in the case $(0,1)$, we give an explicit description of this range.

\vspace{0.2cm}\noindent
We first recall that in both cases $(0,1)$\,, $(1,2)$ and $(0,3)$\,, the differential of the
spectral map at $q=0$ is injective.
Equivalently, the intersection of the tangent spaces to the corresponding
isospectral manifolds at $q=0$ is trivial.
Following the terminology of Shubin Christ \cite{Sh} and Carlson--Shubin \cite{CarlShu94}, we introduce
\[
W \;:=\; T_0\mathcal{M}_{\ell_1}\;\cap\;T_0\mathcal{M}_{\ell_2}\,,
\]
where $\mathcal{M}_{\ell}$ denotes the isospectral set associated with the
Dirichlet spectrum for angular momentum $\ell$\,.
Injectivity of the differential is precisely the statement that
\[
W = \{0\}\,.
\]
Thus, in the case $(\ell_1,\ell_2)=(0,1)$\,, the first part of Theorem~\ref{Closed range} follows from (\cite{Sh}, Lemma~5), while for $(\ell_1,\ell_2)=(1,2)$ or $(0,3)$, it follows from the analysis
carried out in the proof of (\cite{CarlShu94}, Theorem~5.6). 

\vspace{0.2cm}\noindent
We have thus shown that in the cases $(\ell_1,\ell_2) = (0,1)\,$, $(\ell_1,\ell_2) = (1,2)\,$ and $(\ell_1,\ell_2) = (0,3)$\,,
the differential of the spectral map at $q=0$ is injective and has closed range.
In other words, the operator $d_0 \mathcal{S}_{\ell_1,\ell_2}$ is semi-Fredholm
(see~\cite[p.~230]{Kato80}).
Moreover, if an operator $B$ is Fredholm or semi-Fredholm, then any compact perturbation
remains Fredholm or semi-Fredholm and the Fredholm index is preserved
(see~\cite[Theorem~5.26]{Kato80}).
\vspace{0.2cm}\noindent
We now use these theoretical results to refine the analysis in the case $(0,1)$. 
To this end, we set
\[
\Phi_\ell(x) := \frac{\pi}{2}\, x\, J_{\ell+\frac12}(x)^2 = (j_\ell (x))^2\, .
\]
First, we decompose the differential $d_0 \mathcal{S}_{\ell_1,\ell_2}$ into two parts,
\[
d_0 \mathcal{S}_{\ell_1,\ell_2} = A_{\ell_1,\ell_2} + K_{\ell_1,\ell_2}\,,
\]
where we define
\begin{align}
	A_{\ell_1,\ell_2}(\zeta)
	= \Bigg(
	\langle \zeta, 1 \rangle,
	\;&\bigl(
	\langle \zeta,\; 2\Phi_{\ell_1}\!\left(\bigl(n+\tfrac{\ell_1}{2}\bigr)\pi\,\cdot\right) - 1 \rangle
	\bigr)_{n\geq 1}\,,
	\nonumber\\[0.2cm]
	&\bigl(
	\langle \zeta,\; 2\Phi_{\ell_2}\!\left(\bigl(n+\tfrac{\ell_2}{2}\bigr)\pi\,\cdot\right) - 1 \rangle
	\bigr)_{n\geq 1}
	\Bigg)\,,
\end{align}
and
\begin{align}
	K_{\ell_1,\ell_2}(\zeta)
	= \Bigg(
	0,
	\;&\bigl(
	\langle \zeta,\; g_{\ell_1,n}^2
	- 2\Phi_{\ell_1}\!\left(\bigl(n+\tfrac{\ell_1}{2}\bigr)\pi\,\cdot\right)
	\rangle
	\bigr)_{n\geq 1}\,,
	\nonumber\\[0.2cm]
	&\bigl(
	\langle \zeta,\; g_{\ell_2,n}^2
	- 2\Phi_{\ell_2}\!\left(\bigl(n+\tfrac{\ell_2}{2}\bigr)\pi\,\cdot\right)
	\rangle
	\bigr)_{n\geq 1}
	\Bigg)\,.
\end{align}
According to Theorem~2.1 and Corollary~2.2 in Serier~\cite{Ser07}, one has
\[
\| g_{\ell,n}^2 - 2\Phi_\ell(j_{\nu,n}\,\cdot) \|_{L^2(0,1)} =\mathcal O\!\left(\frac{1}{n}\right)\,.
\]
Since $|\Phi'_{\ell}(x)| \leq C_\ell$ uniformly on $\mathbb{R}$ and
\[
\sum_{n\geq 1} \bigl| j_{\ell+\frac12,n} - (n+\tfrac{\ell}{2})\pi \bigr|^2 < \infty
\qquad \text{for any integer } \ell\,,
\]
it follows that
\[
\sum_{n\geq 1}
\bigl\| g_{\ell,n}^2
- 2\Phi_{\ell}\bigl((n+\tfrac{\ell}{2})\pi\,\cdot\bigr)
\bigr\|_{L^2(0,1)}^2
< \infty \,.
\]
Therefore, by the Cauchy--Schwarz inequality, the operator
$K_{\ell_1,\ell_2}$ is Hilbert-Schmidt, and hence compact, from
$L^2(0,1)$ into $\mathbb{R} \times \ell^2(\mathbb{N}) \times \ell^2(\mathbb{N})$\,.
As a consequence of the stability of semi-Fredholm operators under compact
perturbations, it follows that, for $(\ell_1, \ell_2)= (0,1)$, $(\ell_1,\ell_2) = (1,2)\,$ or $(0,3)$ the range of
$A_{\ell_1,\ell_2}\,$, which is a compact perturbation of
$d_0 \mathcal{S}_{\ell_1,\ell_2}\,$, is also closed.
\vspace{0.2cm}\noindent
Now, according to~\cite[Corollary~5.2]{RuSa01}, in the case $(\ell_1,\ell_2) = (0,1)\,$, the family
\[
\left(
1,\;
\bigl(\Phi_{\ell_1}((n+\tfrac{\ell_1}{2})\pi\,\cdot)\bigr)_{n\geq1},\;
\bigl(\Phi_{\ell_2}((n+\tfrac{\ell_2}{2})\pi\,\cdot)\bigr)_{n\geq1}
\right)
\]
is referred to as a \emph{basis} of $L^2(0,1)$\footnote{This statement should be interpreted with care.
In the sense of Gohberg-Krein~\cite{GohbergKrein69}, this only means that the above family is complete and $\omega$-linearly independent in $L^2(0,1)$
This property does not, in general, imply that this family is a Krein basis or a Riesz basis.}. We define
\[
\mathcal F_{\ell_1,\ell_2}
=
\Bigl(
f_0,\ (f_{1,n})_{n\ge1}\,,\ (f_{2,n})_{n\ge1}
\Bigr)\subset L^2(0,1)\,,
\]
where
\[
f_0(r)=1,\qquad
f_{1,n}(r)=2\Phi_{\ell_1}\!\left(\bigl(n+\tfrac{\ell_1}{2}\bigr)\pi r\right)-1\,,\qquad
f_{2,n}(r)=2\Phi_{\ell_2}\!\left(\bigl(n+\tfrac{\ell_2}{2}\bigr)\pi r\right)-1\,.
\]
The family $\mathcal F_{\ell_1,\ell_2}$ remains complete and algebraically linearly independent
in $L^2(0,1)$\,. With this notation, the operator $A_{\ell_1,\ell_2}$  can be written as
\[
A_{\ell_1,\ell_2}(\zeta)
=
\Bigl(
\langle \zeta,f_0\rangle,\ (\langle \zeta,f_{1,n}\rangle)_{n\ge1},\
(\langle \zeta,f_{2,n}\rangle)_{n\ge1}
\Bigr)\,,
\]
and is therefore injective. Recall that $c_{00}$ denotes the space of finitely supported sequences.
Since
\[
\mathbb{R}\times c_{00}\times c_{00}
\quad\text{is dense in}\quad
\mathbb{R} \times \ell^2_{\mathbb{R}}(\mathbb{N}) \times \ell^2_{\mathbb{R}}(\mathbb{N})\,,
\]
it follows from a straightforward algebraic argument that the range
 of $A_{\ell_1,\ell_2}$ is also dense in $\mathbb{R} \times \ell^2_{\mathbb{R}}(\mathbb{N}) \times \ell^2_{\mathbb{R}}(\mathbb{N})$.
Since this range is closed, as shown above, we conclude that
\[
\operatorname{Ran} A_{\ell_1,\ell_2}
=
\mathbb{R} \times \ell^2_{\mathbb{R}}(\mathbb{N}) \times \ell^2_{\mathbb{R}}(\mathbb{N})\,.
\]
In particular, the operator $A_{\ell_1,\ell_2}$ is Fredholm of index zero.

\vspace{0.2cm}\noindent
Finally, since $d_0\mathcal{S}_{0,1}$ is a compact perturbation of $A_{0,1}$, it follows that
$d_0\mathcal{S}_{0,1}$ is also Fredholm with the same index.
Because it is injective, it must therefore be an isomorphism from $L^2(0,1)$ onto
$\mathbb{R} \times \ell^2_{\mathbb{R}}(\mathbb{N}) \times \ell^2_{\mathbb{R}}(\mathbb{N})$\,.
This completes the proof of Theorem~\ref{Closed range}.

\vspace{0.5cm}
	
	\noindent \footnotesize{
		
		\noindent Laboratoire de Math\'ematiques Jean Leray, UMR CNRS 6629. Nantes Universit\'e  F-44000 Nantes  \\
		\emph{Email adress}: damien.gobin@univ-nantes.fr \\
		
		\noindent Laboratoire de Math\'ematiques Jean Leray, UMR CNRS 6629. Nantes Universit\'e  F-44000 Nantes  \\
		\emph{Email adress}: benoit.grebertr@univ-nantes.fr \\
		
		\noindent Laboratoire de Math\'ematiques Jean Leray, UMR CNRS 6629. Nantes Universit\'e  F-44000 Nantes  \\
		\emph{Email adress}: Bernard.Helffer@univ-nantes.fr \\
		
		\noindent Laboratoire de Math\'ematiques Jean Leray, UMR CNRS 6629. Nantes Universit\'e  F-44000 Nantes \\
		\emph{Email adress}: francois.nicoleau@univ-nantes.fr \\

\end{document}